\RequirePackage{rotating} 

\documentclass[trsc,nonblindrev]{informs3} 

\OneAndAHalfSpacedXI 



\usepackage[justification=centering]{caption}
\usepackage{soul}
	\usepackage{ulem}
	\usepackage{amsmath}
	\usepackage{comment}
	\usepackage{graphicx}
	\usepackage{enumerate}
	\usepackage{natbib} 
	\PassOptionsToPackage{hyphens}{url} 
	\usepackage{hyperref}
	\usepackage{caption}
\usepackage{booktabs}
\usepackage{multirow}
\usepackage[table,xcdraw]{xcolor}
\usepackage{graphics}
\usepackage{graphicx}
\usepackage{amsfonts}
\usepackage[Algorithm]{algorithm}
\usepackage[noend]{algpseudocode}
\usepackage{float}
\usepackage{listings}
\usepackage{booktabs}
\usepackage{multirow}
\usepackage{subcaption}
\usepackage[table,xcdraw]{xcolor}
\usepackage{amssymb}
\usepackage{pifont}
\usepackage{tabularx}
\usepackage{rotating}
\usepackage[title]{appendix}

\usepackage{natbib}
 \bibpunct[, ]{(}{)}{,}{a}{}{,}%
 %
 %
 %
 %
 %

\TheoremsNumberedThrough     
\ECRepeatTheorems
\EquationsNumberedThrough    

    \newcommand{\mathleft}{\@fleqntrue\@mathmargin0pt}
\newcommand{\IL}[1]{\textcolor{black}{#1}}

\newcommand{\cK}{\mathcal{K}}
\renewcommand{\qed}{\hfill\blacksquare}
\newcommand{\xmark}{\ding{55}}%

\newtheorem{prop}{Proposition}
\newtheorem{theo}{Theorem}

	\newcommand{\IfThen}[2]{
  \State \algorithmicif\ #1\ \algorithmicthen\ #2
  }
	\algrenewcommand\algorithmicensure{\textbf{Output:}}
\renewcommand{\emph}[1]{\textit{#1}}
\begin{document}


\RUNAUTHOR{Li, Archetti and Ljubi\'c}
\RUNTITLE{RL Approaches for the OP with Stochastic and Dynamic Release Dates}
\TITLE{Reinforcement Learning Approaches for the 
			Orienteering Problem with Stochastic and Dynamic Release Dates}
\ARTICLEAUTHORS{%
\AUTHOR{Yuanyuan Li, 
Claudia Archetti,  
Ivana Ljubi\'c\footnote{Corresponding author}} 
\AFF{
	IDS Department,		ESSEC Business School, Cergy-Pontoise, France, \\
\EMAIL{yuanyuan.li@essec.edu}
\EMAIL{archetti.archetti@essec.edu}
\EMAIL{ivana.ljubic@essec.edu}}
} 

\ABSTRACT{%

In this paper, we study a sequential decision-making problem faced by e-commerce carriers related to when to send out a vehicle from the central depot to serve customer requests, and in which order to provide the service, under the assumption that the time at which parcels arrive at the depot is stochastic and dynamic. 
The objective is to maximize the expected number of parcels that can be delivered during service hours. 
We propose two reinforcement learning (RL) approaches for solving this problem. 
These approaches rely on a look-ahead strategy in which future release dates are sampled in a Monte-Carlo fashion and a batch approach is used to approximate future routes.  Both RL approaches are based on value function approximation - one combines it with a consensus function (VFA-CF) and the other one with a two-stage stochastic integer linear programming model (VFA-2S). VFA-CF  and VFA-2S do not need extensive training as they are based on very few hyper-parameters and make good use of integer linear programming (ILP) and branch-and-cut-based exact methods to improve the quality of decisions. We also establish sufficient conditions for partial characterization of optimal policy and integrate them into  VFA-CF/VFA-2S. In an empirical study, we conduct a competitive analysis using upper bounds with perfect information. We also show that VFA-CF and VFA-2S greatly outperform alternative approaches that: 1)  do not rely on future information, or 2) are based on point estimation of future information, or 3) employ heuristics rather than exact methods, or 4) use exact evaluations of future rewards.  
}%


\KEYWORDS{Reinforcement Learning, Two-Stage Stochastic ILP model, Branch-and-Cut, Markov Decision Process, Orienteering Problem}
\HISTORY{}

\maketitle

%

\section{Introduction}
\label{sec:introduction}

E-commerce markets are booming at remarkable rates. Due to an unprecedented series of lockdowns, billions of people stayed at home to prevent the spread of COVID-19. It is reported that the e-commerce revenue saw a 10\% increase in Europe in 2020 due to the pandemic (\cite{Statista2021}).
At the same time, the expectations of customers in terms of service quality are also increasing. When questioned about the features they would like to obtain from the delivery of online purchases, almost half (48\%) of shoppers mentioned speed, 42\% reduction of the cost of delivery, and 41.6\% more reliable information about delivery time (\cite{StatistaShopping}). 
By providing evidence of the clash between SF Express and Alibaba, \cite{cui2020value} show that delivering   expensive or less-discounted products in a reliable and timely manner brings better customer experience and thus higher profits.
When it comes to the specific challenges in improving customers' satisfaction with delivery, one of the crucial features is related to 
the last-mile delivery leg (\cite{archetti2021recent}).  An important feature of last-mile distribution is that its operations start as soon as parcels are delivered to the final logistic center, typically a city distribution center (CDC). Given the short delivery times requested by the customers, delivery operations typically need to start before all parcels expected to arrive during the day are available at CDC. This raises a question: should the dispatcher wait for more or all parcels to be delivered at CDC, or should he/she start the delivery as soon as there is any available parcel and vehicle?

In this paper, we focus on the sequential decision-making problem related to when to deliver parcels and which parcels to deliver under the assumption that the time at which parcels become available at CDC (called \textit{release dates}) is stochastic and dynamic. Release dates are the moments when parcels become available
for delivery. On the other hand, the release time is like the beginning of a time window, i.e., the earliest feasible time at which a customer can be visited or a cargo can be picked up. In problems with release time, a vehicle can start the route without waiting for all parcels to be ready. We introduce a new problem called the \emph{Orienteering Problem with Stochastic and Dynamic Release Dates} (DOP-rd).
The problem finds applications in same-day delivery (SDD) systems (\cite{li2023emerging}). As stated in \cite{stroh2022tactical}, SDD services face tight delivery deadlines and relatively low order volumes, thus time instead of vehicle capacity tends to be the limiting resource. We assume that a single and uncapacitated vehicle is serving customer requests with no time window so that the vehicle should finish its service before the deadline $T_E$, which might correspond to the duration of the driver's working shift.  
The release dates are considered to be uncertain and their distributions are dynamically updated during the sequential decision-making process.  The objective is to maximize the expected number of requests served within $T_E$. This corresponds to maximizing the expected number of customers who get their packages within the desired deadline (end of the day), thus in turn, this implies the maximization of customer satisfaction \citep{joerss2016customer,voccia2019same}. 


This paper presents the following main contributions to the literature:  
\begin{enumerate}
    \item We introduce and study DOP-rd. The problem shares similarities (with respect to the input setting) with the one studied in \cite{archetti2020dynamic}. The main differences are in the objective function and the presence of a deadline. While \cite{archetti2020dynamic} aim to minimize the expected traveling and waiting time while ensuring that all parcels are delivered, the DOP-rd seeks to maximize the expected number of parcels delivered within a deadline $T_E$. The shift in objectives and constraints brings fundamental changes to the problem setting, necessitating unique solution approaches.  
    \item We model the problem as a Markov Decision Process (MDP) and propose a batch approach to approximate future routes (actions), based on Monte-Carlo simulation for scenario generation and continuous approximation from \cite{daganzo1984length} for estimating the duration of routes. To the best of our knowledge, the method we propose to approximate future routes has never been applied before to routing problems. At each decision epoch, we learn the value function based on the updated information, which is embedded both in scenario generation and in the approximation approach. Thus it leads to a reinforcement learning approach. 
    
    \item 
    We propose two novel hybrid approaches to derive a decision-making policy 
    in the (approximated) decision space. Both belong to the class of value function approximation (VFA) methods and are hybridized with a one-step look-ahead policy. The first is combined with a consensus function (VFA-CF) and the second with a two-stage stochastic ILP model (VFA-2S).  
The former approach relies on exact solutions of a series of deterministic integer linear programs (ILPs); the latter one uses a two-stage stochastic ILP model instead.  These ILPs are solved using state-of-the-art branch-and-cut techniques. The obtained solutions are then used to decide when to leave the depot and which customers to serve while maximizing the expected number of parcels delivered. In addition, we propose a partial characterization of optimal policy, the conditions of which are checked before applying VFA-CF/VFA-2S. 
\item 
Most of the existing literature on stochastic and dynamic vehicle routing problems dealing with MDP relies on heuristic policies (see \cite{ulmer2020modeling}). In this work, we approximate future routes in each decision epoch and employ exact methods to determine the best action associated with the current state. 
Thanks to this advantage of embedding exact methods 
inside a reinforcement learning framework, 
VFA-CF/VFA-2S can be run online without sophisticated learning procedures that require extensive training, such as those based on neural networks. Specifically, learning the value function requires only minimal tuning of a few hyper-parameters.
    \item 
    In an extensive computational study, we show VFA-CF/VFA-2S excel other benchmark approaches thanks to these following distinct features: (1) embedding of the future information, (2) using sampling rather than point estimations, (3) using exact methods rather than simple heuristics, and (4) using approximations of future routes. Specifically, concerning point (2), we compare the VFA approaches with a fully deterministic approach, which builds exact solutions based on point estimations of future release dates. \\
    We additionally provide results of a competitive analysis, in which the upper bounds are calculated by solving to optimality the problem under the assumption of having perfect information.
    Besides, we conduct experiments with two vehicles to test the potential and adaptability of proposed VFA approaches when extended to the multi-vehicle case. 

\end{enumerate}

The paper is organized as follows. Section \ref{sec:literature} presents a literature review by focusing on the differences with the current work. Section \ref{sec:problem_formu} contains  the problem description and formulation as a  Markov Decision Process. Section \ref{sec:ub_pc} presents upper bounds to solution values and a partial characterization of the optimal policy. The solution approaches are introduced in Section \ref{sec:sol_approach}, while  Section \ref{sec:computational_result} is devoted to computational experiments and results. Finally, we conclude in Section \ref{sec:conclusion}. 

\section{Literature Review}
\label{sec:literature}

We first focus on the literature on routing problems with stochastic and dynamic features and orienteering problems in particular, and then on the common approaches of reinforcement learning used for these routing problems. Finally, we move to problems with release dates and highlight differences in problem settings and solution approaches compared to the current work.

The literature on stochastic and dynamic routing problems is wide. We focus on pioneering papers and surveys. \cite{psaraftis1988dynamic} introduce the traveling salesman problem with the dynamic customer demands.  \cite{bertsimas1991stochastic} generalize the former results and propose a  mathematical model for the stochastic and dynamic vehicle routing problems (VRPs). \cite{bent2004scenario} propose using the multi-scenario sampling in combination with a consensus function for the dynamic vehicle routing problem with time windows. Other interesting extensions  of stochastic or dynamic settings appear in \cite{savelsbergh1998drive},  \cite{secomandi2001rollout}, \cite{secomandi2009reoptimization}, \cite{goodson2013rollout}, \cite{subramanyam2021robust}. 
As for surveys on stochastic and dynamic VRPs, we refer to \cite{ritzinger2016survey}, \cite{oyola2018stochastic}, \cite{soeffker2021stochastic}. 

The problem we study belongs to the class of orienteering problems. An Orienteering Problem (OP) involves a set of nodes, each with a corresponding score. The goal is to determine a path (or a tour) that visits a subset of these nodes, maximizing the total accumulated score subject to a limited time budget. For the details on the variants and applications, we refer readers to surveys by \cite{souffriau2011orienteering}, \cite{archetti2014chapter}, and \cite{gunawan2016orienteering}. Specifically, \cite{souffriau2011orienteering} present extensions, variants, solution approaches, and applications. \cite{archetti2014chapter} present a survey of the class of vehicle routing problems with profits, where the orienteering problems are considered as the basic problems. \cite{gunawan2016orienteering} extend the summaries of \cite{souffriau2011orienteering} and \cite{archetti2014chapter} by including more recent papers and new variants. \cite{gunawan2016orienteering} also study OP with stochastic aspects such as uncertainties in the collected scores, travel and service times, waiting time, etc. In addition, \cite{zhang2018dynamic} present a variant of OP in which the traveler may experience stochastic waiting time at locations. The objective is to maximize the expected rewards collected by determining the visiting sequence while respecting each customer's time window. \cite{song2020building} address a problem involving regular service for subscription customers. The goal is to minimize the expected costs minus the revenue from serving on-demand customers. The problem is modeled as a two-stage stochastic decision problem, where the first stage assigns drivers to customers and the second stage is a team-orienteering problem with time windows. Another paper by \cite{angelelli2021dynamic} studies a dynamic and stochastic variant of OP spanning two continuous days. On the first day, the company accepts or rejects real-time requests that arrive randomly. On the second day, the accepted requests require a vehicle to visit customers for goods pickup.  The goal is to maximize the expected profit, defined as the difference between the expected total prize and travel cost.


\subsection{Reinforcement Learning for Stochastic and Dynamic Routing Problems}
Reinforcement learning (RL) has emerged as a successful approach for solving various decision-making problems (\cite{silver2016mastering}). It is based on the fundamental concept that an agent interacts with an environment and learns what to do based on the feedback it receives in the form of rewards or penalties. The detailed techniques and algorithms used in RL have been extensively discussed in books, with notable references including \cite{sutton2018reinforcement} who describe the topic from the point of view of computer science, \cite{bertsekas2019reinforcement} who adopts the language of control theory, and \cite{Powell2022reinforcement} who names it approximate dynamic programming (ADP) using the language of operations research. The application of RL in solving stochastic and dynamic routing problems has gained significant attention. Next, we present the related research works.

Q-learning is an algorithm that learns the value of taking an action in a state.  It is one of the most popularly used RL algorithms and has been applied to address various routing problems. Here are some examples. \cite{mao2018reinforcement} study the adaptive routing problem in stochastic and time-dependent traffic networks. The goal is to find a routing strategy to minimize the expected total travel time. To solve the problem, they introduce two variants of Q-learning approaches. The first variant is an online Q-learning method that requires a discrete state space. The second is an offline fitted Q iteration technique with a continuous state space that approximates the Q function to overcome the curse of dimensionality. \cite{basso2022dynamic} consider a VRP with a single electric commercial vehicle for reliable charge planning. The problem considers the uncertainties in energy consumption and customer requests, aiming to minimize the expected energy consumption. 
To address the problem, the authors propose a combination of Q-learning and VFA techniques. Specifically, they employ Q-learning with expected energy and battery depletion and a VFA based on a reduced state representation. 
\cite{revadekar2020qoral} study a pickup and delivery problem in the context of online medicine delivery. In the problem, customers order medicines through an online system, which finds the nearest pharmacy and assigns a delivery person to serve the delivery request. The goal is to minimize the delivery time. To tackle the problem, the authors propose a double Q-learning, which uses a double estimator to update the Q tables storing distance and time reward matrices separately. 
Similarly, \cite{bozanta2022courier} study a pickup and delivery problem but in a meal delivery service. The problem involves picking up meals from customer-requested restaurants and delivering them to customers located on an $m\times m$ grid, with a limited number of available couriers and a restricted time frame. In this case, the objective is to maximize the revenue derived from the requests served. The authors propose three different Q-learning approaches. 
\cite{chen2022same} work on addressing the unfairness issue in SDD service availability. They model the problem with a weighted sum of two objectives: maximizing the overall service level (utility) and maximizing the minimum service rate across all regions (fairness). To address this problem, the authors first use a routing heuristic to reduce the action space and then propose a deep Q-learning approach. \cite{chen2022deep} also utilize a deep Q-learning approach but in the context of a decision problem involving the assignment of new customers to either drones or vehicles or the rejection of service.
For more references on the topic, we refer readers to the survey by \cite{hildebrandt2022opportunities}.

Contrary to the methods described above, we do not rely on classical $Q$-learning mechanisms, and employ exact methods combined with a one-step look-ahead and approximation of future routes instead. To the best of our knowledge, closest to our work in the routing literature is the study by \cite{anuar2021multi} where the authors study a multi-depot dynamic VRP with stochastic road capacity caused by earthquake damages. In their problem setting, the routes computed in advance may become infeasible during real-time execution due to unforeseen disasters such as earthquakes. The authors propose a solution approach using two ILP models, one for replenishment and the other for delivering supplies. These two models are used interchangeably based on the vehicle's capacity. 
Like the current work, \cite{anuar2021multi} use exact methods inside the RL/ADP framework. 
However, the two works make different contributions, which are summarized in Online Appendix B.

\subsection{Routing Problems with Release Dates}
\label{lite:RD}
The previous studies mainly focus on demands and travel time uncertainties. The works on other stochastic and dynamic aspects, such as release dates, appeared recently.
The term ``routing problem with release date" was first introduced in \cite{cattaruzza2016multi}, where the authors define it as a problem where each parcel is associated with a release date indicating the time at which the parcel is available at depot. Release dates are supposed to be known beforehand. The problem is a multi-route vehicle routing problem with release dates, and the authors solve it through a population-based algorithm.
In  \cite{archetti2015complexity}, the Traveling Salesman Problem with release dates (TSP-rd) is considered, i.e., the problem with a single uncapacitated vehicle. The authors analyze the complexity of the problem in two variants - minimizing the total distance traveled with a constraint on the maximum deadline and minimizing the total time needed to complete the distribution without a deadline. Later, \cite{reyes2018complexity} extend the results found in \cite{archetti2015complexity} by considering service guarantees on customer-specific delivery deadlines. \cite{shelbourne2017vehicle} consider VRP with release and due dates where a due date indicates  the latest time the order should be delivered to the customer. The objective function is defined as a convex combination of the operational costs and customer service level. The authors propose a path-relinking algorithm to address the problem. More recently, \cite{archetti2018iterated} propose a mathematical programming formulation and present a heuristic approach based on an iterated local search for solving TSP-rd. 

All contributions mentioned above study routing problems with deterministic release dates. Recently, there have been several studies related to applications in SDD services, where release dates are stochastic. Among them,  \cite{voccia2019same} study a multi-vehicle dynamic pickup and delivery problem with time constraints where requests arrive dynamically from online customers. Each request is associated with a deadline or a time window. The objective is to maximize the expected number of requests that can be delivered on time. The authors first model the problem as an MDP, then, based on sample-scenario planning, they propose a heuristic solution approach. Rather than defining delivery windows at the receiver side, \cite{van2019delivery} consider dispatch windows at the consolidation center. 
As in \cite{voccia2019same}, an MDP is defined. To overcome ``the three curses of dimensionality'',  the authors propose an ADP algorithm using a VFA. In contrast, \cite{archetti2020dynamic} study a single-vehicle problem with no deadline and where the objective is to deliver all parcels as fast as possible. They propose a reoptimization approach that heuristically reduces the number of decision epochs. Furthermore, two models are defined to select an action at each reoptimization epoch. While both models proposed in \cite{archetti2020dynamic} aim to minimize the total completion time, the main difference between the two is that the first model is a stochastic mixed-integer program model that considers the full stochastic information on the release dates, while the second one is a deterministic model that uses a point estimation (PE) for the stochastic release dates. For a detailed survey on routing problems in SDD, see \cite{li2023emerging}.

Based on \cite{archetti2020dynamic}, our paper studies the DOP-rd with a single, uncapacitated vehicle serving customer requests with no time window. Unlike \cite{archetti2020dynamic}, which does not consider drivers' working hours or strict deadlines, we aim at maximizing the expected number of parcels delivered within a deadline $T_E$. Recent studies highlight the practical importance of focusing on the maximum number of delivered parcels (\cite{voccia2019same} and \cite{behrend2018integration}, \cite{ulmer2018same}), as vehicles often need to return to the depot during working hours, and some parcels may be deferred for distribution the next day or by a third-party carrier. In \cite{archetti2020dynamic}  every solution in which all parcels are served is feasible. In their setting, the vehicle can wait until the last parcel arrives at the depot and subsequently serve all the parcels in a single route. On the contrary, in our model, we address strict deadline constraints, ensuring timely parcel delivery. This distinguishes our approach from \cite{archetti2020dynamic}. 

As for solution approaches, the decision-making policy we consider adds to existing reoptimization frameworks behind the VFA approaches in routing literature. Indeed, instead of proposing a heuristic approach for solving problems in each scenario as in \cite{voccia2019same} or at each reoptimization epoch as in \cite{archetti2020dynamic}, we solve ILPs with the approximation of future routes to balance the computational efficiency and policy performance. Our extensive numerical study shows the value of our proposed reoptimization scheme in routing applications. The ones proposed in the current work are innovative not only in the way future routes are approximated but also in how policies are derived through VFA-2S and VFA-CF and in the way the ILP models are constructed.

Another contribution, which is very close to the current study, is given by \cite{van2019delivery}. Again, the problem is formulated as an MDP. However, decision epochs are predefined time instants separated by equidistant intervals, as in \cite{klapp2018one}.
The objective function in \cite{van2019delivery} is to minimize the expected total dispatching costs. 
The paper focuses on dispatching decisions, so the routing decisions are not considered in the optimization problem, and routing costs are estimated by Daganzo's formula. In our work, decision epochs are set dynamically according to the release dates. Also, differently from \cite{van2019delivery},  we consider a single vehicle. Finally, we use Daganzo's formula with a different scope, i.e., for inferring how many requests can be fulfilled in future routes at each decision epoch. Instead, we get the exact routing sequence for the current epoch. 

Finally, another related work is given by \cite{schrotenboer2021fighting}, where the authors study how to balance the consolidation potential and delivery urgency of orders. Contrary to our setting, it is a pickup and delivery problem with a fleet of vehicles and customer deadlines, aiming to maximize the expected customer satisfaction. Their approach is based on a cost function approximation.



We summarize the major characteristics of the contributions on routing problems with release dates in Table 3 in Online Appendix B. 

\section{Problem Description}
\label{sec:problem_formu}

The DOP-rd is defined as follows. A company distributes parcels to its customers from a CDC. At CDC the company may hold some level of inventory, so some parcels may be available right away, but most of them are delivered to the CDC by suppliers throughout the day. Hence, the distribution of the latter parcels can start only after their arrival at CDC. The moment at which parcels are delivered to the CDC is unknown. In case suppliers' vehicles are equipped with Global Positioning System (GPS), the information about their arrivals to the CDC (also called depot in the following) is constantly updated. In SDD, customer locations can be partitioned into districts with dedicated vehicles, where each district can be served by a vehicle multiple times during working hours (\cite{banerjee2022fleet}, \cite{stroh2022tactical}). We consider a single uncapacitated vehicle to perform the service within a district. The goal is to serve as many parcels as possible within a given deadline. 

More formally, let  $G = (V, A)$ be a complete graph. The set of vertices $V$ includes vertex $0$, representing the depot, and the set $N$ of customers. Each customer is associated with an arrival time of its parcel to the depot, called release date, and a delivery location. Packages arrive throughout the day until a predefined deadline $T_E$, corresponding to when the vehicle has to be back at the depot and finish the service. Release dates are stochastic, and their distributions are dynamically updated throughout the day. Specifically, the stochastic aspect simulates the situation where suppliers' vehicles carrying the parcels to the CDC might encounter unpredicted events. The dynamic aspect is related to the fact that the information about the arrival of the suppliers' vehicles to the CDC evolves over time, as in the case of GPS-equipped vehicles. Each arc $(i, j) \in A$ is associated with a traveling time $d_{ij}> 0$, and we assume that the triangle inequality holds, i.e., $d_{ij}+d_{ji'} \ge d_{ii'}$, $i,j,i' \in V$. 
A single uncapacitated vehicle is available to perform the deliveries. We define a route as a tour starting and ending at the depot and visiting customers in between. The set of routes is denoted as $\mathcal{K}=\{0, ..., |\mathcal{K}|-1\}$, where $\mathcal{K}$ contains the first route leaving for delivery (also called route 0), followed by the set of future routes denoted by $K$. 
The goal is to maximize the expected number of requests served within $T_E$. 

In this paper, we present the case in which the orders are placed, but the arrival time of the parcels ordered at the depot (CDC) is unknown. However, the solution approach we propose is also valid for the case where goods are at the depot but the request time is unknown, as studied in \cite{voccia2019same}. In both cases, the customer locations are assumed to be known. We note that the problem studied in \cite{voccia2019same} represents a particular case of our problem as it does not consider dynamic release dates.

We now formalize the problem as a Markov Decision Process (MDP). Note that the detailed notation used is summarized in Table 1 in Online Appendix A.

\subsection{The Problem as a Markov Decision Process} 

At each decision epoch $e$, we distinguish between customers that have already been served (i.e., their parcels have been delivered), $N_e^{served}$, and those that are still unserved, $N_e^{unserved}$, where $N_e^{served} \bigcup N_e^{unserved} = N$.
There are two categories of unserved customers based on the information available for the release date at decision epoch $e$. If the parcel has arrived at the depot, the release date is known, and the customer can be served, thus we call this set of customers $N_e^{known}$. On the contrary, $N_e^{unknown}$ denotes the set of customers whose parcels are still to be delivered to the depot. In turn, there are two subsets of $N_e^{unknown}$:  customers whose information about the distribution of the release date is not updated until the parcel arrives at the depot are denoted as $N_e^{static}$; customers whose information about the distribution of the release date is updated, i.e., a new estimation of release date distribution is given in real-time, thanks to GPS equipped vehicles, are denoted as $N_e^{dynamic}$. Note that $N_e^{static} \bigcup N_e^{dynamic} = N_e^{unknown}$. 
The parcels are delivered individually or in batches from suppliers to the depot.
Let $\widetilde{r}^e_i$ be the random variable associated with the release date of customer $i \in N_e^{unserved}$ at decision epoch $e$. This variable is defined as follows:
\begin{itemize}
    \item for $i \in N_e^{known}$, $\widetilde{r}^e_i = r_i$, where $r_i$ is the actual arrival time of the parcel of customer $i$.
        \item for $i \in N_e^{static}$, we have $\widetilde{r}^e_i = \widetilde{r}^0_i$, which means that the distribution of the random variable associated with the release date is estimated at time 0 and never updated till the parcel arrives at the depot. In practice, the mean and variance of the distribution can be empirically obtained from historical data.
        \item for $i \in N_e^{dynamic}$, $\widetilde{r}^e_i$ is a random variable representing release date at decision epoch $e$. Its distribution is dynamically updated throughout the day.
\end{itemize}

The distinction between $ N_e^{dynamic}$ and $N_e^{static}$ does not relate to policy design (which is solely based on $N_e^{unknown}$) but rather pertains to the utilization of instances generated for numerical tests (see Online Appendix F).

At each decision epoch $e$, the company has to decide whether to wait for more parcels to arrive or to dispatch the vehicle delivering a subset of the parcels available at the depot.  No preemptive return is allowed, i.e., the vehicle comes back to the depot only once all parcels on board are delivered to the corresponding customers. \cite{ulmer2019preemptive} investigate the benefit of allowing preemptive depot returns to integrate dynamic requests into delivery routes. The results showed that benefits are marginal.

The MDP components are the following:
\begin{itemize}
    \item  \textit{Decision epochs:} 
    A decision epoch is denoted as $e \in \{0, ..., E\}$, and the time associated with it is $t_e$. Note that $T_E$ represents the deadline, i.e., when the vehicle has to be back to the depot, and no further deliveries are allowed. The first decision epoch $0$ corresponds to the beginning of the delivery operations. At each decision epoch, an action is made based on the available deterministic and updated stochastic information. A decision epoch corresponds to either the arrival of a package at the depot or to the time when the vehicle is back to the depot at the end of a route.  
    To avoid waiting for too long at the depot without having any new arrival of packages, additional decision epochs are defined at equally distant time intervals $\phi$, when the action is evaluated according to the updated stochastic information. 
    \item  \textit{States:} 
    The state $S_e = (t_e, N^{known}_e, \{\widetilde{r}^e_i\}_{i\in N_e^{unknown}})$ includes all information for evaluating an action at decision epoch $e$, i.e.,  the time of the decision epoch $t_e$,  the set $N^{known}_e$ of customers with known release dates (whose parcels have arrived), 
    and the stochastic information (i.e., distributions) related to random variables associated with future release dates $\{\widetilde{r}^e_i\}_{i\in N_e^{unknown}}$. 
    At decision epoch 0, no customer has been served. 
    \item  \textit{Actions:} 
    At each decision epoch $e$, the company has to either dispatch a subset of accumulated orders or wait at the depot for new parcels to arrive. The decision involves action $\mathcal{X}_e$, to which we also refer as route 0 for decision epoch $e$. When the vehicle is not at the depot or if the vehicle is at the depot, but there is no parcel available, the decision is to wait, i.e., the route 0 is empty. 
    Similarly to what has been done in \cite{archetti2020dynamic},  if dispatching is decided, an action consists of a new route serving a subset of the unserved customers. However, while in \cite{archetti2020dynamic} customers served in the new route could belong to either $N^{known}_e$ or $N^{unknown}_e$, we instead reduce the set of customers to be potentially included in the new route to $N^{known}_e$. This implies that the new route, in case at least one customer is included, will leave immediately from the depot, contrary to \cite{archetti2020dynamic} where the vehicle might have to wait at the depot in case the route includes at least one customer in $N^{unknown}_e$.
    Formally, the action space at decision epoch $e$, denoted by $X(S_e)$, includes all possible subsets of $N^{known}_e$, each one associated with the TSP route serving all parcels in the subset.
    The set of customer locations visited in the route associated with the action $\mathcal{X}_e$ is denoted as $l(\mathcal{X}_e)$, which is empty if the route is empty, i.e., the vehicle is not dispatched and the action is waiting at the depot. The route 0 associated with the action in decision epoch $e$ consists of a set of consecutive arcs traversed by the vehicle (this set could also be empty). The traversal of an arc is represented by the binary variable denoted as $x^e_{ij}, i, j\in N_e^{known}\cup \{0\}$. Thus $\mathcal{X}_e$ can be expressed in terms of these decision variables as $\mathcal{X}_e =(x^e_{ij})$.
    
    
    \item  \textit{Transitions:} 
    After action $\mathcal{X}_e$, a transition is made from state $S_e$ to a new state $S_{e+1}$. The state $S_{e+1}$ is described as $S_{e+1} = (t_{e+1}, N^{known}_{e+1}, \{\widetilde{r}^{e+1}_i\}_{i\in N_{e+1}^{unknown}})$. 
    We have $t_{e+1} = t_e + t_{route}(\mathcal{X}_e)$ if route is not empty,  $t_{e+1}$ = min$(t_p, t_e+\phi)$ otherwise, 
    where $t_{route}(\mathcal{X}_e)$ is the time associated with the route corresponding to action $\mathcal{X}_e$, $t_p$ is the earliest time when a new parcel arrives while the vehicle is at the depot, 
    and $\phi$ is the maximum waiting time between two decision epochs as defined earlier. Let $N^{new}_{e+1}$ be customers whose parcels arrive at the depot between decision epochs $e$ and $e+1$. 
    Correspondingly, the set of customers is updated as $
        N_{e+1}^{unknown} := N_{e}^{unknown}\backslash N^{new}_{e+1} \ \text{and}\ 
    N^{known}_{e+1} :=N^{known}_e\symbol{92}l(\mathcal{X}_e)\cup N^{new}_{e+1}.$
    \item  \textit{Objective:} 
    The objective is to find a policy maximizing the expected number of requests served within $T_E$. Each action $\mathcal{X}_e$ chosen when the system is in state $S_e$, is guided by a decision rule and brings an \textit{immediate reward}, expressing the number of requests served by the route dispatched at time $t_e$, and a future reward. A sequence of decision rules is defined as a \textit{policy}, and the optimal policy is the one that maximizes the expected number of parcels delivered till epoch $E$.
    
        In decision epoch $e$, the goal is to maximize the expected number of requests served within the time interval $[t_e,T_E]$. We set the final value function $V_{E}(S_E)=0$, and for $e=0, ..., E-1$, we define the value function as the following recursive formula:
    \begin{equation}\label{eq:value_func_obj}
    V_e(S_e) = \max_{\mathcal{X}_e\in X(S_e)}\{C(S_e, \mathcal{X}_e) + \gamma\mathbb{E}[V_{e+1}(S_{e+1}\mid S_e, \mathcal{X}_e)]\}.
    \end{equation}

 Here, $C(S_e, \mathcal{X}_e)$ denotes the immediate reward, and $\gamma\mathbb{E}[V_{e+1}(S_{e+1}\mid S_e, \mathcal{X}_e)]$ is the expected value  associated with decision epoch $e+1$, while  $\gamma$ is a discount factor. In situations characterized by high information uncertainty and an inability to guarantee the precision of future reward estimations, a discount factor can be applied to discount the effect of
making mistakes in the future (\cite{burk2023environmental}). Considering the nature of the SDD problem, where time constraints are so tight, the more distant the future is, the less certain and accurate our solutions tend to be. Therefore, including a discount factor, as an algorithmic parameter, allows us to explicitly adapt to and manage the diminishing accuracy over time, providing better flexibility and performance of the algorithm in handling such a dynamic problem. 

The system suffers from the curse of dimensionality in the number of states and actions, which grows exponentially with the number of requests. In the recent literature,  reinforcement learning is frequently used to deal with this issue (see, e.g., \cite{powell2009you} and \cite{van2019delivery}). In this work, we propose two reinforcement learning approaches for solving the DOP-rd, the first based on a value function approximation with a consensus function and the second on a value function approximation with a two-stage stochastic ILP model. Both methods are hybridized with a one-step look-ahead strategy, in which future release dates are sampled in a Monte-Carlo fashion.  

The two approaches, which make use of branch-and-cut-based exact methods to improve the quality of decisions, are described in detail in Section \ref{sec:sol_approach}.

\end{itemize}

\section{Upper Bounds and Partial Characterization of Optimal Policy}\label{sec:ub_pc}

In the following, we discuss valid upper bounds useful for competitive analysis. We then provide sufficient conditions under which it is optimal for the vehicle to leave the depot immediately in order to serve a subset of available parcels.

\subsection{Upper Bounds}

A first trivial upper bound for DOP-rd can be calculated by solving an OP, assuming that all unserved parcels are available at the depot (see Online Appendix C for further details). In the following, we instead describe a stronger upper bound obtained a-posteriori when all information is known.

\paragraph{Upper Bound with Perfect Information:} 
A tighter upper bound for DOP-rd can be computed  
in a \emph{wait-and-see} fashion, using the perfect information with respect to realized release dates. In this case, the optimal solution is the solution of the OP with Release Dates (OP-rd), 
a deterministic counterpart of the problem studied in this article. Inspired by the model in \cite{archetti2018iterated} for TSP-rd and completion time minimization, we propose a formulation for OP-rd as follows.

Let us assume that all release dates, $r_i\ge 0$, $i \in N$ are known. Then, OP-rd aims to find the maximum number of parcels that can be served in the interval $[0,T_E]$. A solution consists of up to $|\cK|$ routes performed by the vehicle, so that the end time of the last route does not exceed $T_E$. The problem is NP-hard since the OP can be reduced in polynomial time to it.
We can model the problem as an ILP, making use of the following decision variables:
\begin{itemize}
    \item $s_i^k$: binary variable indicating whether parcel $i$ is served in route $k$;
    \item $\xi^k_{ij}$: binary variable equal to 1 in case arc $(i,j)$ is traversed in route $k$;
    \item $d_k$: continuous variable representing the starting time of route $k$.
\end{itemize}
The \IL{ILP formulation for the OP-rd} is then:
\begin{subequations}
	\label{eq:base}
\begin{equation}\label{eq:obj_ILP_base}
\hbox{max} \sum_{i \in N}\sum_{k \in \cK} s^k_i 
\end{equation}

\begin{align}
\label{eq:inout_ILP_base}
 \text{subject to: } \sum_{(i, j)\in A}\xi^k_{ij} &= \sum_{(j, i)\in  A}\xi^k_{ji}  = s_i^k && \quad i \in V, k \in \cK\\
\label{eq:subtour_ILP_base}
 \sum_{i, j\in S}\xi_{ij}^k & \leq \sum_{i \in S\setminus{\ell}}s_i^k 
&& \quad S \subseteq  N, \IL{|S|\geq 2,} \ell \in S, k \in \cK \\
\label{eq:singleVisit_ILP_base}
\sum_{k \in \cK}s_i^k & \leq 1 &&  \quad i \in N \\
\label{eq:releaseDate_ILP_base}
d_k & \ge r_i s_i^k &&  \quad i \in N, k \in \cK \\
\label{eq:consecutiveroutes_ILP_base}
d_{k+1} & = d_k + \sum_{(i, j)\in A}d_{ij} \xi^k_{ij}  && \quad k \in \cK \\
\label{eq:finalroute_ILP_base}
d_{|\cK|} & = T_E &&  \\
\IL{\xi^k_{ij}, s^k_{i}} & \in \{0,1\} && i,j \in V, k \in \cK
\end{align}

where \eqref{eq:inout_ILP_base} are degree constraints, \eqref{eq:subtour_ILP_base} are the generalized subtour elimination constraints (GSECs), and \eqref{eq:singleVisit_ILP_base} 
fix the number of visits to a customer to be at most one. Constraints \eqref{eq:releaseDate_ILP_base}--\eqref{eq:finalroute_ILP_base} determine the starting time of each route. Specifically, \eqref{eq:releaseDate_ILP_base} establish that a route cannot depart prior to the release date of each customer to be visited, \eqref{eq:consecutiveroutes_ILP_base} fix the starting time of a route as the ending time of the preceding route, and \eqref{eq:finalroute_ILP_base} set the ending time of the last route equal to $T_E$. Note that constraint \eqref{eq:finalroute_ILP_base}, as well as the equality in constraint \eqref{eq:consecutiveroutes_ILP_base}, is because the waiting time can be shifted to the beginning of the distribution without affecting the structure and the value of the solution, thus avoiding any waiting time between consecutive routes and at the end, as shown in \cite{archetti2018iterated}. This way, the starting time of route $d_{|\mathcal{K}|}$ equals the ending time of the last route performed, $|\mathcal{K}|-1$, which is, in turn, equal to the deadline $T_E$.
We also add constraints \eqref{subt0} and symmetry-breaking constraints 
 \eqref{symmetry_break} to strengthen the formulation:
\begin{align} 
\sum_{i \in N} s_i^k & \leq |N| s_0^k && \quad k \in \cK && \label{subt0}
&& \\
s_0^k & \leq  s_0^{k+1} && \quad k \in \cK, k\neq|\cK|-1. && \label{symmetry_break}
\end{align}
\end{subequations}

The optimal solution of \eqref{eq:base} provides an upper bound to the DOP-rd. If the optimal solution is not found within a given computation time limit, the upper bound at the end of the computation (rounded down to the nearest integer) provides a valid upper bound for the DOP-rd. The model is solved using a branch-and-cut algorithm, in which the GSECs are separated on the fly.  For more details on the separation of these constraints, see e.g., 
\cite{lucena2004strong}, \cite{Ljubic21}.

Note that Formulation \eqref{eq:base} needs an upper bound on the number of routes $|\cK|$ as input. A trivial upper bound is $|\cK|=|N|$, which is the one used in \cite{archetti2018iterated}. However, a tighter value can be obtained by solving the following ILP problem. Let us order customers $i \in N$ in increasing order according to the values of release dates (ties are broken arbitrarily). We define $\alpha_i$ as a binary variable equal to 1 if a direct route, from the depot to customer $i$ and back, is performed, $i \in N$. A valid upper bound on the number of routes is given by the optimal solution of:
\begin{subequations}
	\label{eq:UB}
\begin{equation}\label{eq:obj_UB}
\hbox{max} \sum_{i \in N}\alpha_i 
\end{equation}
\begin{align}\label{eq:cons_UB} \text{subject to: }
 \alpha_i r_i+\sum_{j=i}^{j=|N|}(d_{0j}+d_{j0})\alpha_j &\le T_E
&& \quad i \in N \\
\IL{\alpha_{i}} & \in \{0,1\} && \quad i \in N 
\end{align}
\end{subequations}
Formulation \eqref{eq:UB} counts the maximum number of direct routes that can be performed within $T_E$. Let us call $UB_{route}$ the optimal solution of \eqref{eq:UB}.
\begin{prop}
\label{theo:UB}
Given a vector of deterministic release dates $r_i$ and deadline $T_E$:
\begin{enumerate}
\item $UB_{route}$ is a valid upper bound on the maximum number of routes  \IL{$|\cK|$ for the OP-rd};
\item 
Any optimal solution of \eqref{eq:UB} is a feasible solution for \eqref{eq:base}.
\end{enumerate}
\end{prop}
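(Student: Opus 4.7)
The plan is to prove Part 1 by transforming any feasible solution of \eqref{eq:base} with $k$ non-empty trips into a feasible $\boldsymbol{\alpha}$ for \eqref{eq:UB} whose objective equals $k$; this shows $UB_{trip} \geq k$ for every achievable trip count and hence that $UB_{trip}$ is a valid upper bound on $|\cK|$.

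The construction picks, in each non-empty trip $T_m$, the representative $c_m := \arg\max_{i \in T_m} r_i$ and sets $\alpha_{c_m} = 1$. The $c_m$ are distinct by the single-visit constraint \eqref{eq:singleVisit_ILP_base}. To verify \eqref{eq:cons_UB} at index $i = c_m$, I would use three facts: (i) by \eqref{eq:releaseDate_ILP_base}, trip $T_{m'}$ cannot start before $r_{c_{m'}}$; (ii) by the triangle inequality, the duration $D_{m'}$ of trip $T_{m'}$ is at least $d_{0,c_{m'}} + d_{c_{m'},0}$; (iii) the single vehicle performs trips sequentially. Together, (i) and (iii) imply that every $m'$ with $r_{c_{m'}} \geq r_{c_m}$ starts no earlier than $r_{c_m}$ and hence the total duration of those trips fits inside $[r_{c_m}, T_E]$; replacing $D_{m'}$ by the lower bound from (ii) then yields exactly \eqref{eq:cons_UB}. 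I expect the representative choice to be the main subtle point: taking the \emph{latest}-release customer of each trip is what lines up the index set in \eqref{eq:cons_UB} with the trips whose start times are bounded below by $r_{c_m}$, whereas a naive choice (e.g., the earliest) would miss some contributions and break the inequality.

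For Part 2, given an optimal $\boldsymbol{\alpha}^*$ of \eqref{eq:UB} with $J := \{i : \alpha^*_i = 1\}$, I would lift to a candidate of \eqref{eq:base} as follows. Order $J = \{j_1 < j_2 < \dots < j_{|\cK|}\}$ by release date, assign trip $k$ to be the direct tour $0 \to j_k \to 0$ (so $s_{j_k}^k = s_0^k = 1$ and $\xi_{0,j_k}^k = \xi_{j_k,0}^k = 1$, all other variables zero), and schedule trips back-to-back ending at $T_E$ via $d_k := T_E - \sum_{m=k}^{|\cK|}(d_{0,j_m}+d_{j_m,0})$. The degree constraints \eqref{eq:inout_ILP_base}, subtour constraints \eqref{eq:subtour_ILP_base}, single-visit constraints \eqref{eq:singleVisit_ILP_base} and \eqref{subt0}, and symmetry-breaking constraints \eqref{symmetry_break} are immediate for a disjoint union of direct tours that all use the depot; the consecutive-trip equalities \eqref{eq:consecutiveTrips_ILP_base} and terminal equation \eqref{eq:finalTrip_ILP_base} hold by construction of $d_k$; and the release-date constraint \eqref{eq:releaseDate_ILP_base} at each trip $k$ reduces precisely to \eqref{eq:cons_UB} applied at $i = j_k$, which $\boldsymbol{\alpha}^*$ satisfies by feasibility. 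This completes the lift and the proposition.
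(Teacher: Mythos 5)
Your argument is correct and follows essentially the same route as the paper's: the paper also reduces each trip of \eqref{eq:base} to a single-customer (direct) trip and uses the release-date constraints, the triangle inequality and the sequential execution of trips to conclude that the resulting direct trips satisfy \eqref{eq:cons_UB}, and its Part~2 is the same back-to-back lift of direct trips ending at $T_E$. One minor correction to your commentary: the latest-release representative is not actually essential --- for \emph{any} representative $c_{m'}\in T_{m'}$, constraint \eqref{eq:releaseDate_ILP_base} already forces trip $T_{m'}$ to start no earlier than $r_{c_{m'}}$, so every trip whose representative has release date at least $r_{c_m}$ starts no earlier than $r_{c_m}$ regardless of how representatives are chosen; this is precisely why the paper can afford to delete customers ``arbitrarily'' until one remains per trip.
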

\proof{Proof.} See Online Appendix D.

\subsection{Partial Characterization of Optimal Policy} \label{sec:partial}
The following proposition provides sufficient conditions to partially characterize an optimal policy. 
\begin{prop} \label{prop:charact} 
\item Let $\ell$  be the largest number of parcels from $N^{unserved}_e$ that can be delivered within $[t_e,T_E]$. If there exists a subset $N_{sol} \subseteq N^{known}_e$ of parcels that can be delivered within $[t_e,T_E]$ such that $|N_{sol}|= \ell$, then the optimal policy is to leave the depot immediately and distribute the parcels from $N_{sol}$. 
\end{prop}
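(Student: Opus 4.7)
The plan is to prove the claim via a simple sandwich argument: show that $\ell$ is an upper bound on $V_e(S_e)$ for any admissible policy, and then exhibit an action that attains value $\ell$. Since the proposed action will be the one described in the statement, optimality follows.

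\emph{Step 1 (upper bound on $V_e(S_e)$).} I would first argue that from state $S_e$ any policy can only serve parcels in $N^{unserved}_e$, since customers already served cannot contribute again to the reward. By the definition of $\ell$, no schedule of routes starting at time $t_e$ and finishing by $T_E$ can deliver more than $\ell$ parcels from $N^{unserved}_e$, even under the most favorable information (e.g., the OP-style upper bound in which all unserved parcels are assumed immediately available at the depot, as recalled at the start of Section~\ref{sec:ub_pc}). Any realization of the remaining random release dates can only push releases later, hence can only shrink the set of feasible completions; so along every sample path the number of parcels deliverable within $[t_e,T_E]$ is bounded by $\ell$. Taking expectations preserves the bound, yielding $V_e(S_e)\le \ell$.

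\emph{Step 2 (attainment).} Consider the action $\mathcal{X}^*_e \in X(S_e)$ that dispatches the vehicle immediately on a TSP route through $N_{sol}$. Admissibility holds because $N_{sol}\subseteq N^{known}_e$, so all selected parcels are physically at the depot and the route can indeed leave at time $t_e$; by the hypothesis of the proposition the route returns no later than $T_E$. The immediate reward is $C(S_e,\mathcal{X}^*_e)=|N_{sol}|=\ell$, and since future rewards in the Bellman recursion \eqref{eq:value_func_obj} are non-negative, the value of $\mathcal{X}^*_e$ is at least $\ell$. Combined with Step~1, this gives $V_e(S_e)=\ell$ and identifies $\mathcal{X}^*_e$ as optimal at epoch $e$.

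The main obstacle is making the upper-bound step rigorous, because the statement of the proposition leaves some latitude in how $\ell$ is computed. I would pin this down by interpreting $\ell$ as the optimum of the offline OP-rd with horizon $[t_e,T_E]$ applied to $N^{unserved}_e$ under the most permissive release dates (e.g., all parcels available at $t_e$); this interpretation is consistent with the upper bound discussion in Section~\ref{sec:ub_pc} and is clearly dominated by $|N_{sol}|$ under the hypothesis. Everything else -- feasibility of dispatching $N_{sol}$, non-negativity of future rewards, and monotonicity of the feasible delivery set under release-date shifts -- is immediate, so the bulk of the effort is in cleanly formalizing what ``can be delivered within $[t_e,T_E]$'' means and showing that this quantity dominates the expected reward of any admissible policy.
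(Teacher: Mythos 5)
Your proposal is correct, and it is precisely the argument the authors have in mind: the paper itself gives no proof beyond declaring the statement ``Trivial,'' and the intended reasoning is exactly your sandwich — $\ell$ (computed, as the surrounding discussion in Section~\ref{sec:partial} confirms, as the OP optimum on $N^{unserved}_e$ with budget $T_E-t_e$ and all parcels treated as available) upper-bounds the reward of any policy on every sample path, while immediately dispatching $N_{sol}\subseteq N^{known}_e$ attains $\ell$ since future rewards are non-negative. Your write-up is a faithful and complete formalization of that argument, including the correct resolution of the ambiguity in how $\ell$ is defined.
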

\proof{Proof.} Trivial.

Hence, Proposition \ref{prop:charact} provides sufficient conditions under which it is optimal to leave the depot and (partially) distribute the available parcels. Checking these conditions  requires solving an NP-hard optimization problem. Indeed, given a decision epoch $e$,  and the set of parcels from $N^{unserved}_e$, one has to find an optimal solution of the OP with respect to $N^{unserved}_e$ with a maximum route duration equal to $T_E - t_e$ and with all parcel profits set to one. In case the value of the optimal solution is $\ell > |N^{known}_e|$, then it immediately follows that the condition of Proposition \ref{prop:charact} is not satisfied. Otherwise, one has to solve a new OP on the set $N^{known}_e$, and the condition of Proposition \ref{prop:charact} holds if and only if the value $\ell$ of the optimal solution remains unchanged. Note that, in preliminary experiments, we also implemented a different approach for verifying Proposition \ref{prop:charact}, where a single OP was solved on $N^{unserved}_e$ where profits were set to 1 for parcels in $N^{unknown}_e$, and to $1-\epsilon$ for parcels in $N^{known}_e$, with a sufficiently small value of $\epsilon$. However, the \IL{former} approach 
turned out to be much more efficient than the latter one.
Finally, we note that, for small values of $|N^{known}_e|$ and large values of $[t_e,T_e]$, it is unlikely that Proposition \ref{prop:charact} holds, and in this case, we skip checking whether the underlying conditions hold (see Section \ref{sec:computational_result} for more implementation details). 



\section{Reinforcement Learning Solution Approaches}
\label{sec:sol_approach}
The solution approaches are based on Monte-Carlo simulation, where scenario generation is performed at each decision epoch $e$ according to the state $S_e$, and on an approximation of the future routes, described in Section \ref{sec:Batch}. Thus, an online 
approximation scheme is developed, which learns from updated information.  

According to formula \eqref{eq:value_func_obj}, in each decision epoch, the goal is to find an action $\mathcal{X}_e$,  i.e., the ``route 0'' (the route that starts now), which maximizes the sum of the immediate reward $C(S_e, \mathcal{X}_e)$, corresponding to the number of parcels delivered in the route 0,  and the expected number of parcels delivered by future routes. 
To anticipate future routes, 
at each decision epoch $e$, we generate a set of  scenarios $\Omega$ associated with unserved customers predicting their release dates $\tilde r_e$. For each scenario, we approximate future routes using the batch approach described in Section \ref{sec:Batch}. 
This approximation is used in both reinforcement learning approaches to determine the decision-making policy, which are:
\begin{itemize}
    \item \emph{Value Function Approximation with a Consensus Function (VFA-CF):} for each scenario,  we run a deterministic ILP model to determine the set of known requests that should be served with a route starting immediately and the set of requests included in future routes. Then, a consensus function based on the solutions obtained over all scenarios defines the set of known requests served immediately (if any), i.e., with a route leaving from the depot at time $t_e$.
    \item \emph{Value Function Approximation with a Two-Stage Stochastic ILP Model (VFA-2S):} instead of looking at each scenario independently, we build a two-stage stochastic ILP model in which the first stage 
    determines a set of requests to be delivered at time  $t_e$, while the second stage concerns the expected number of parcels delivered by future routes by integrating all scenarios with equal probability. 
\end{itemize}

As stated before, both methods are hybridized with a one-step look-ahead strategy. One-step look-ahead involves looking one step ahead and considering all possible actions that can be taken from the current state, and selecting the action that maximizes the immediate reward plus the expected reward of the next state. VFA involves estimating the value function for each state. We combine these two techniques to exert their advantages in the following way. We approximate the future impact of the current action rather than simulating all possible actions at the next state. At each decision epoch, we first approximate future routes using a batch approach. We then examine the available actions in the current state and evaluate each action's immediate reward (i.e., the number of parcels delivered by a route leaving now) and the next state's expected reward (i.e., the expected number of parcels delivered by future routes). Finally, we select the action that maximizes the sum of the immediate reward and the expected future reward.

As decisions for the DOP-rd must be taken quickly in practical contexts (e.g., in seconds/minutes), the considered approximation of future routes can be relevant to substantially speeding up computation and making the approach applicable in practice. Without the approximation, using the VFA-CF/VFA-2S with exact methods to determine the entire sequence of future routes would be intractable.

The two approaches are described in Sections \ref{policyFunc} and \ref{look-aheadFunc}, respectively (an overview of additional notation used is provided in Table 2 in Online Appendix A).
Both ILP methods are based on state-of-the-art branch-and-cut techniques. Note that, in both approaches, Proposition \ref{prop:charact} is checked at each decision epoch (before solving the corresponding ILP) and the procedure is stopped in case it is satisfied, as the best policy is identified by the OP solution defined by the proposition.

\subsection{The Batch Approach}
\label{sec:Batch}
At each decision epoch $e$, we are given the sets of known customers $N_e^{known}$ and unknown customers $N_e^{unknown}$. As for $N_e^{unknown}$, we generate a set of scenarios $\Omega$  where a scenario $\omega  \in \Omega$ represents a possible realization of the release dates for $i \in N_e^{unknown}$, sampled according to the distribution associated with $\tilde{r}^e_i$. Assume that we are given a scenario $\omega\in\Omega$, 
we propose to determine the route serving a subset of requests from $N_e^{known}$ (the route 0), while approximating the future routes and the respective rewards obtained from serving the remaining ones from $N_e^{unserved}$  until the deadline. We assume that there will be $|K|$ additional future routes serving customers in $N_e^{unserved}$, and for their approximation we use a batch approach. The term “batch” refers to a set of requests scheduled for future delivery, with each batch being served by a dedicated future route. On the other hand, the “batch approach” refers to the specific approach employed to create batches. 

First, we assume that each future route will serve at most $\rho$ requests. This will allow us to obtain an approximation of the duration of future routes easily. Also, it is reasonable to assume that in practical applications, the company will not wait too long (i.e., it will not wait for more than $\rho$ parcels to become available at the depot) before dispatching the vehicle. Thus, by properly tuning the value of $\rho$ (as shown in Online Appendix H.2), one might obtain a good approximation of the duration of future routes. We apply the formula of \cite{daganzo1984length} to estimate the duration of each future route. This formula is commonly used in literature thanks to the fact that it is easy to implement and provides good estimation ( \cite{jabali2012continuous} and \cite{van2019delivery}). According to the formula, given the Euclidean area $\mathcal{A}$ containing the locations of unserved customers and $\rho$, under the assumption that the locations are scattered uniformly and independently, the expected tour duration $T_D$ is $0.75\sqrt{\mathcal{A}\rho}$. 
This approach particularly fits the deliveries in densely populated areas, where dispatchers are assigned a restricted number of parcels in each delivery, thus route duration is rather stable and easy to estimate. 

The input to the batch approach is given by the duration of a batch route $T_D$ (calculated through Daganzo's formula mentioned above), the distribution of release dates, and $\rho$, the maximum number of requests served in each future route (represented by a batch). All these parameters, together with $|\Omega|$ (the number of scenarios sampled to approximate the future release dates), are referred to as $\theta$ in the following. Then, for each scenario $\omega \in \Omega$, the batch algorithm works as described in the following. For ease of reading, we omit the index $\omega$ in the notations.

The batch approach (whose pseudo code is given in Algorithm \ref{simu:Pro}) starts by ordering the unserved requests according to their release dates associated with the realization of scenario $\omega$ (Step 3).  Recall that $\widetilde{r}^e_i$ are random variables representing the release dates for $i \in N_e^{unserved}$. For a given scenario $\omega$ we will denote by $r^e_i$ the realization drawn from the release date distribution for $i \in N_e^{unknown}$ and actual release dates for $i \in N_e^{known}$. The algorithm then divides requests into batches. To serve as many requests as possible, the last route (or batch) is supposed to serve the requests arriving at the latest that can still be feasibly served before the deadline. Thus in Step 5 the selection is made backwards, i.e., starting from the request with the latest release date, and checking whether it can be included in the last route, i.e., if its release date is not larger than the earliest possible starting time of the last route, which is $T_E-T_D$. If true, the number of requests $q$ in the assigned batch is increased by one. Subsequently, if the request is known, the batch $k$ is added to $K_0$; otherwise, request $i$ is allocated to the batch $k$ and the counter $\rho_k$ is updated. After that, in Step 12, if the batch size limit is reached ($q=\rho$) or all requests are processed ($i=1$), the start and end time of the batch are updated, and counters are reset (Steps 13-15).
The algorithm iteratively follows the procedure, checking the next sorted requests. Then a new route is created (the second last) associated with a starting time equal to $T_E-2T_D$, and so on. In this manner, we continue checking the next requests until all are examined or the start time of a new batch $t_k$ is less than or equal to the time at decision epoch $e$ (Step 6).
To be consistent with the earlier notation, route 0 refers to the route supposed to leave the depot immediately.
We introduce the new notation for a detailed description of the algorithm:
\begin{itemize}
    \item $K$: the set of indices for all the batches created (we use the same notation as the set of future routes since each batch will be served by a future route. However, the IDs are annotated reversely, i.e., batch 1 is served by the last route, batch 2 is served by the second last route, etc);
    \item $K_0$: the set of indices for batches with positive spare capacities ($K_0 \subseteq K$). We say that a batch $k$ has a positive spare capacity if the number of requests in $N_e^{unknown}$ assigned to $k$ is less than $\rho$;
    \item $k(i)$: the index of the batch  request $i$ is assigned to,  $i \in N_e^{unknown}$; 
    \item $\tau_{start}^k$: the starting time of the route serving batch $k\in K$;
    \item $\tau_{end}^k$: the ending time of the route serving batch $k\in K$.
    \item  $\rho_k$: the number of requests in $N_e^{unknown}$ assigned to the route serving batch $k\in K$.
\end{itemize}
The scheme of the batch algorithm is presented in Algorithm \ref{simu:Pro}.  
\begin{algorithm}[tb!]
    \caption{The batch algorithm
    \label{simu:Pro}}
     \small
    \linespread{0.88}\selectfont
 \begin{algorithmic}[1]

    \Require{Time $t_e$; 
    Unserved requests $N_e^{unserved}=N_e^{known}\cup N_e^{unknown}$; Release dates \{$r^e_i$, $i \in N_e^{unserved}$\};  Deadline
    $T_E$; Maximum number of requests allowed in a batch $\rho$; 
     Duration of a batch route $T_D$; 
    \Ensure
    Set $K_0$; Number of batches $|K|$;
    Batch indices \{$k(i)$, $i \in N_e^{unknown}$\};
    \{($\tau_{start}^k, \tau_{end}^k,\rho_k)$, $k \in K$\};
    }
    \State 
    $n \gets |N_e^{unserved}|$; 
    $K \gets \emptyset$; $K_0 \gets \emptyset$; 
    $k \gets 1$; 
    $t_k \gets T_E$ - $T_D$; \IL{$\rho_1 \gets 0$}
    \State  $k(i) \gets 0$,  for all $i \in N_e^{unknown}$;
    \State Sort the requests from $N_e^{unserved}$ in increasing order of 
    release dates $r_i^e$;
    \State Initialize the number of requests assigned to the $k$-th batch $q \gets 0$;
    \For
    {$i \gets n$ to $1$}
    
    \IfThen{$t_k \leq t_e$}
        \textbf{break} 
    \If{release date $r^e_i \leq  t_k$}
            \State $q \gets q+1$
            \If{$i \in N_e^{known}$}
                \IfThen{$k \not \in K_0$}
                $ K_0 \gets K_0 \cup \{k\}$
            
            \Else 
                \quad $k(i) \gets k$; $\rho_k \gets \rho_k+1$
            \EndIf

        \If{$q = \rho$ or $i = 1$}
            \State $K \gets K \cup$ $\{k\}$
            \State $\tau_{start}^k \gets t_k$; $\tau_{end}^k \gets t_k+T_D$
            \State $k \gets k + 1$;
            $t_k \gets t_k - T_D$; $q \gets 0$; \IL{$\rho_k \gets 0$}
            
        \EndIf
    \EndIf
    \EndFor
    \State
    \Return {$K_0, K$, \{$k(i)$, $i \in N_e^{unknown}$\},  and \{$(\tau_{start}^k$, $\tau_{end}^k, \rho_k)\}_{k\in K}$}

 \end{algorithmic}
\end{algorithm}
Note that in Algorithm \ref{simu:Pro}, customers in $N_e^{known}$ are treated differently than those in $N_e^{unknown}$. For each customer $i \in N_e^{unknown}$ such that $r_i^e \leq T_E - T_D$ the algorithm determines the unique batch $k(i)\in K$ to which $i$ is assigned. However, the customers in $N_e^{known}$ can be assigned to any potential batch $k \in K$ with some spare capacities, i.e., which contains less than $\rho$ unknown requests. The subsets of the batches with spare capacities are denoted by $K_0$, and a unique assignment of a request $i \in N_e^{known}$ to some $k \in K_0$ is obtained through the optimization models proposed in Sections  \ref{policyFunc} and \ref{look-aheadFunc}.

\begin{table}[tb!]
\caption{\IL{Input instance for the batch approach (\textit{release date (RD) 
0 means the parcel $i$ is at the depot, i.e., $i \in N_e^{known}$})}}
\label{table:batchTitle}
\centering
\scalebox{0.68}{
\begin{tabular}{|p{50mm}|p{7.20mm}|p{5.50mm}|p{5.50mm}|p{5.50mm}|p{5.50mm}|p{5.50mm}|p{5.50mm}| p{5.50mm}|l|}
\hline
\multirow{2}{*}{\textbf{Total number of  requests}} & \multirow{2}{*}{9} & \multicolumn{7}{l|}{\multirow{2}{*}{\textbf{\begin{tabular}[c]{@{}l@{}}Max number of requests\\  in a batch $\rho$\end{tabular}}}} & \multirow{2}{*}{3} \\  &    & \multicolumn{7}{l|}{}   &                    \\ \hline
\textbf{Number of batches}                         & 3                  & \multicolumn{7}{l|}{\textbf{Duration of a batch route $T_D$}}  & 1                  \\ \hline
\textbf{Returning time of route 0}                 & \IL{TBD}                  & \multicolumn{7}{l|}{\textbf{Deadline}}            & 16                 \\ \hline
\textbf{ID of the requests}                        & 1                  & 2                    & 3                    & 4                     & 5                     & 6                    & 7                    & 8& 9                  \\ \hline
\textbf{RD of the requests}                       & 0                  & 0                    & 0                    & 14                    & 14                    & 15                   & 15                   & 15 & 15                \\ \hline
\end{tabular}}
\end{table}
\begin{figure}[tb!]
    \centering    \includegraphics[width=0.43\textwidth]{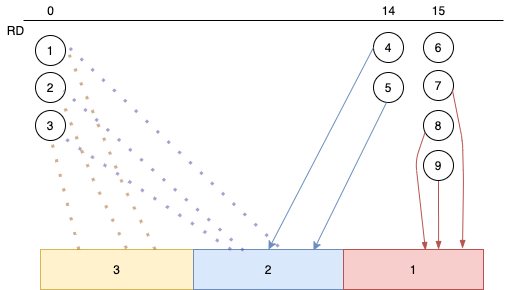}
    \caption{The assignment of requests to batches (\textit{a dashed line with no arrow connects known requests with batches from $K_0$, indicating potential assignments of each $i \in N_e^{known}$ to some $k \in K_0$)} }
    \label{fig:batch}
\end{figure}
\begin{table}[tb!]
\centering
\caption{The batches}
\label{table:batchContent}
\scalebox{0.7}{
\begin{tabular}{|l|c|c|c|c|c|c|c|c|}
\hline
\textbf{Batch   number}         & \textbf{3} & \multicolumn{3}{c|}{\textbf{2}} & \multicolumn{3}{c|}{\textbf{1}} \\ \hline
\textbf{Unknown requests assigned}      & {-}  & {-} & 4        & 5                & 7      & 8   & 9   \\ \hline
\textbf{Max RD served}         & 0          & \multicolumn{3}{c|}{14}         & \multicolumn{3}{c|}{15}         \\ \hline
\textbf{Start time of the tour} & 13         & \multicolumn{3}{c|}{14}         & \multicolumn{3}{c|}{15}         \\ \hline
\textbf{End time of the tour}   & 14         & \multicolumn{3}{c|}{15}         & \multicolumn{3}{c|}{16}         \\ \hline
\textbf{k(i)}                   & -          & -         & $k(4)=2$ & $k(5)=2$ & $k(7)=1$  & $k(8)=1$ & $k(9)=1$ \\ \hline
\textbf{$K_0$}                   & \checkmark          & \multicolumn{3}{c|}{\checkmark}          & \multicolumn{3}{c|}{\xmark}          \\ \hline
\end{tabular}}
\end{table}

To explain how the batch approach works, a toy example with 9 requests is displayed in Table \ref{table:batchTitle}, and the output is shown in Table \ref{table:batchContent} and Figure \ref{fig:batch}.
Note that in Table \ref{table:batchTitle} we 
also consider the ending time of route 0 (which is a decision variable defined in the ILP model, where it is denoted as  $\tau_{end}^0$, see Section \ref{ILP: model}), that is the route leaving immediately from the depot. Its ending time will allow scheduling only those batches that can start after route 0 is completed. The role of this parameter will become clearer in the following sections, where we explain how the output of the batch algorithm is integrated with the two solution approaches. As the deadline is 16, the first batch route should be completed no later than 16. Each route lasts one time unit (according to Daganzo's formula), so the tour's start time is 15. A batch can serve three requests at most. As the value of the solution does not depend on the subset of requests served but just on their number, we arbitrarily assign to batch 1 (which is mapped as the last future route) the three requests with the highest index. For batch 2, its ending time is 15, so its starting time is 14. Thus the two requests with sampled release dates equal to 14 are assigned to the batch. As batch 2 has a residual capacity of 1, any known requests (with IDs 1, 2, and 3) can be assigned to it. In addition, requests 1, 2, and 3 can be assigned to batch 3. Table \ref{table:batchContent} describes the content of batches and displays the values of $k(i)$. It also shows that batches 2 and 3 are the ones that can be used to serve known requests (thus, they belong to set $K_0$) with the check mark on the last row. If a request $i \in N_e^{unknown}$ is not assigned to any batch (for example, because its release date is larger than the starting time of batch 1 or because there are no sufficient batches to include all requests), then $k(i)=0$. 

The optimality conditions for the batch approach under some simplifying conditions are shown in Online Appendix E. Hence, by combining scenario generation and the batch approach, we approximate the value of future routes as a one-step look-ahead policy in which future events are condensed in a single step, and future actions are evaluated through batch approach. 
 
Finally, to evaluate if each future route is executable, we need to check whether the starting time is greater than the ending time of the route leaving immediately from the depot, that is route 0. This is done through the optimization models presented in the following section.

\subsection{Value Function Approximation with a Consensus Function (VFA-CF)}
\label{policyFunc}

There are four fundamental ways of deriving policies in MDPs (\cite{powell2014clearing}), one of which is based on value function approximations (VFAs) where the whole term representing the value function of the future states in (\ref{eq:value_func_obj}) is replaced by an approximation. Our solution approaches fall into this category. VFA-CF relies on the batch approach to approximate future routes, and uses a consensus function working according to the following rule:
    \begin{equation}\label{eq:value_func}
    \tilde X^{VFA-CF}_e(S_e \mid \theta) = \Phi(\tilde{X}^\omega_e)_{\omega \in \Omega} 
    \end{equation}
\begin{equation} \label{eq:determinisic}\text{where}\qquad 
    \tilde{X}^\omega_e = \arg \max_{\mathcal{X}_e \in X(S_e)}\{C(S_e, \mathcal{X}_e) + \gamma \tilde{V}_{e+1}(\tilde{S}_{e+1} \mid S_e, \mathcal{X}_e, \omega) \}  
       \end{equation}
        is the optimal action for the given value function approximation $\tilde{V}_{e+1}(\tilde{S}_{e+1}\mid S_e, \mathcal{X}_e, \omega)$ ($\tilde{S}_{e+1}$ is the approximation of state at $e+1$ and $\tilde{V}$ is an estimate of the value of being in state $\tilde{S}_{e+1}$), under the realization of scenario $\omega$.
       As defined in Section \ref{sec:Batch}, $\theta$ captures all tunable parameters of the model. 
       Function $\Phi$ is the consensus function {for determining the action, considering the best actions over all scenarios (see Section \ref{consensus}). Thus, the main idea of the proposed  VFA-CF is that for each scenario $\omega$, the best action $\tilde{X}^\omega_e$ is determined, according to the method described in Section \ref{ILP: model}. This consists in solving a deterministic ILP where the goal is to determine the route that should leave immediately from the depot (which might be empty) by optimizing the function given by the sum of the immediate reward associated with this route plus the estimated number of requests to be served in future routes (determined through the batch approach described in Section \ref{sec:Batch}). Then, a consensus function is applied to determine the best action according to the solution obtained on each of the scenarios. Note that, as we solve an ILP for each scenario independently, there is no uncertainty in the input data to the ILP: indeed, the release dates correspond to the realizations associated with the given scenario, and we refer to this model as \textit{a deterministic ILP model} below}.
       
       \subsubsection{Consensus Function}
\label{consensus}
We define the consensus function, denoted by $\Phi$ in formula (\ref{eq:value_func}), as the policy to determine which requests are served in route 0 while considering all solutions associated with different scenarios.
The scheme is reported in Algorithm \ref{simu:Consensus}. The function checks solutions $\tilde{X}_e^\omega$ over all scenarios $\omega \in \Omega$, by focusing on the requests in $N_e^{known}$ served in route 0 in each scenario. The idea is to determine which subset of these requests should be served in route 0 at the current decision epoch. Specifically, a request $i \in N_e^{known}$ is served if it is included in route 0 in at least $\lceil\lambda|\Omega|\rceil$ scenarios, with $0 < \lambda < 1$. If at least one request is included in the route, then a Traveling Salesman Problem (TSP) is solved over all requests included to determine the best sequence, thus determining the values of the arc variables $x^e_{ij}$, $i, j \in N_e^{known}\cup\{0\}$ associated with the action $\mathcal{X}_e$. If the duration of the TSP exceeds $T_E$, an OP model is solved to determine the requests to be served, followed by the solution of TSP over the selected requests to get the best visit sequence.  Otherwise, the decision is to wait at the depot until the next decision epoch $e+1$. 
\begin{algorithm}[tb!]
\caption{Consensus Function \label{simu:Consensus}}
\linespread{0.85}\selectfont
\begin{algorithmic}[1]
\Require{  
Time $t_e$; $N_e^{known}$;
a set of scenarios $\Omega$; solution $\tilde{X}^\omega_e$ of scenario $\omega$;}
\Ensure a route $R$; 

\State Initialize the frequency of all requests: $F_e(i) \gets 0\  i \in N_e^{known}$; Initialize the set of requests to be served as $I_{sol} \gets \emptyset$;
\For{$\omega \in \Omega$}
    {
        \State $I_\omega \leftarrow$  the set of requests served in route $0$ of the solution $\tilde{X}^\omega_e$ of scenario $\omega$. 
        \For{$i \in I_\omega$}  $F_e(i) \gets F_e(i) + 1$
        \EndFor
    }
\EndFor
\For{$i \in N_e^{known}$}
    \IfThen{$F_e(i) \geq \lambda|\Omega|$} $I_{sol} \gets I_{sol} \cup \{i\}$
\EndFor
\State $R \leftarrow$ TSP over $I_{sol}$
\If{length of R $> T_E-t_e$} 
    \State $I_{sol} \gets$ OP over $I_{sol}$
    \State \Return $R \leftarrow$ TSP over $I_{sol}$
\Else
    \State \Return R
\EndIf
\end{algorithmic}
\end{algorithm}

     
\subsubsection{Deterministic ILP Model}
\label{ILP: model}

We now describe the optimization model used to find the decision $\tilde X^\omega_e$ given in \eqref{eq:determinisic}.
The idea is that only the reward of route $0$  (which corresponds to $C(S_e, \mathcal{X}_e)$) is calculated with the detailed routing information (since the route is to be executed immediately in the case at least one request is assigned to it), while the value and the duration of future routes are estimated through the batch approach (Section \ref{sec:Batch}). 
We define as $K = \{1, ..., \lvert K \rvert\}$  the set of indices for all the batches created where each batch is served by a future route. 
The parameters considered are the following:
\begin{itemize}
    \item $\tau_{start}^0$, a parameter indicating the starting time of the immediate route, so its value equals  $t_e$. 
	
	\item $A^{known}$, it includes all arcs $(i, j)$ where $i, j \in N_e^{known}\cup \{0\}$ linking customers with known requests and depot. 
\end{itemize}

In addition, we have the parameters associated with future batches corresponding to the output of the batch approach (see Section \ref{sec:Batch}). 
The decision variables are the following:
\begin{itemize}
    \item $x_{ij}^0$: binary arc variable for route $0$, equal to 1 if the route traverses arc $(i, j) \in A^{known}$, 0 otherwise.
    \item $y_i^0$: binary variable associated with known request $ i \in N_e^{known}$, equal to 1 if customer $i$ is served in route $0$, 0 otherwise.
    \item $z_k$: binary variable equal to 1 if future batch $k \in K$ is  executed, 0 otherwise.
    \item $\tau_{end}^0$: continuous 
    variable denoting the ending time of route 0. 
	\item $\pi_i^k$: binary variable equal to 1 if $i \in N_e^{known}$ is served in batch $k \in K_0$, 0 otherwise.
\end{itemize}





The deterministic ILP model is given as follows.
In line with (\ref{eq:determinisic}), \IL{the first term in the objective function \eqref{eq:obj_ILP}}, namely $\sum_{i\in N^{known}_e}y_i^0$, maps $C(S_e, \mathcal{X}_e)$ representing the number of requests served by action $\mathcal{X}_e$. The sum of the two remaining terms in \eqref{eq:obj_ILP}, namely 
$\sum_{k \in K} \rho_k z_k$ and  
$\sum_{k \in K_0} \sum_{i \in N^{known}_e} \pi_i^k$ maps $\tilde{V}_{e+1}(\tilde{S}_{e+1} \mid S_e, \mathcal{X}_e, \omega)$, representing the approximated number of parcels delivered by future routes. Note that in case $\sum_{i\in N^{known}_e}y_i^0=0$, it means that route 0 does not serve any request and the decision is to wait at the depot until the next decision epoch $e+1$. Then for each scenario $\omega$, the corresponding ILP formulation is given as follows: 

{  
\begin{subequations}
	\label{eq:deterministic}
\begin{equation}\label{eq:obj_ILP}
\hbox{max}\ 
\sum_{i\in N^{known}_e}y_i^0 + \gamma \left[
\sum_{k \in K} \rho_k z_k +  \sum_{k \in K_0} \sum_{i \in N^{known}_e}\pi_i^k\right]
\end{equation}

\begin{align}\label{eq:inout_ILP}
\text{subject to:} \sum_{(i, j)\in A^{known}}x_{ij}^0 &= \sum_{(j, i)\in A^{known}}x_{ji}^0 = y_i^0
&& \quad i \in N_e^{known}\cup \{0\} \\
\label{eq:subtour_ILP}
 \sum_{i, j\in S}x_{ij}^0 & \leq \sum_{i \in S\setminus{\ell}}y_i^0 
&& \quad S \subseteq  \IL{N^{known}_e, |S|\geq 2,} \ell \in S \\
\label{eq:endStartRoute_ILP}
 \tau_{end}^0 &= \tau_{start}^0 + \sum_{(i, j) \in A^{known}}d_{ij}x_{ij}^0 \\
\label{eq:deadline_ILP}
 \tau_{end}^0 &\leq T_E \\
\label{eq:requestLimit_ILP}
 \sum_{k \in K_0}\pi_i^k+y_i^0 &\leq 1 && \quad  i \in N^{known}_e \\
\label{eq:wholeBatch_ILP}
\sum_{i \in N^{known}_e} \pi_i^k &\leq (\rho - \rho_k) z_k && \quad  k \in K_0 \\
\label{eq:routeK_ILP}
 z_{k+1} &\leq z_k && \quad  k = 1, \ldots |K|-1 \\
\label{eq:startK_ILP}
 \tau_{end}^0 - \tau_{start}^k &\leq   (T_E-\tau_{start}^k)(1-z_k) && \quad  k\in K \\
\label{eq:decivars}
 y_i^0, \pi_i^k, x_{ij}^0, z_k &\in \{0, 1\}  && \IL{i,j \in N^{known}_e, k \in K}
\end{align}

Besides, we initialize the model with GSECs  of size two 
and the constraints (\ref{eq:depot_ILP}) to strengthen the formulation:
\begin{align}
\label{eq:size2_ILP}
x_{ij}^0 + x_{ji}^0 &\le y_{i}^0 && \quad i,j \in N^{known}_e 
\\
\label{eq:depot_ILP}
\sum_{i \in \IL{N^{known}_e}} y_i^0 & \leq |N| y_0^0 &&
\end{align}
\end{subequations}

}
The objective function (\ref{eq:obj_ILP}) maximizes the total number of requests served by route $0$ plus the approximated number of requests served by future routes, distinguished in unknown and known requests, multiplied by the discount factor $\gamma$. Constraints (\ref{eq:inout_ILP}), (\ref{eq:subtour_ILP}) guarantee that route $0$ is a circuit connected to the depot and prevent generating subtours. Constraint (\ref{eq:endStartRoute_ILP}) determines the ending time of route 0, which is computed as its starting time plus the traveling time. 
Constraints (\ref{eq:deadline_ILP}) ensure that  route $0$ finishes before the deadline $T_E$.  Constraints (\ref{eq:requestLimit_ILP}) state that the same request cannot be served by both route $0$ and any future route serving a batch in $K_0$, which guarantees the unique assignment of each known request as mentioned in Section \ref{sec:Batch}. 
Constraints (\ref{eq:wholeBatch_ILP}) guarantee that each future route serving batches in $K_0$ 
does not serve more than $\rho$ requests. Note that these constraints are needed for batches in $K_0$ only, as any request in $N_e^{known}$ can potentially be assigned to each of these batches. Instead, for the batches in $K\backslash K_0$, the constraint is satisfied by construction from the batch approach. 
Constraints (\ref{eq:routeK_ILP}) state that if batch $k+1$ is executed, batch $k$ must be executed too. Constraints (\ref{eq:startK_ILP}) ensure that the future route serving batch $k$ can start only after finishing route $0$. Finally, \eqref{eq:decivars} define the variables domain.

 To solve this model, we implemented a branch-and-cut procedure in which GSECs \eqref{eq:subtour_ILP} are separated in each node of the branching tree (see Section \ref{sec:computational_result} for further details).

\subsection{Value Function Approximation with a Two-Stage Stochastic ILP Model (VFA-2S)}
\label{look-aheadFunc}
The second approach we propose is a two-stage stochastic ILP model using a VFA:
\begin{equation}\label{eq:value_function}
    \tilde{X}^{VFA-2S}_e(S_e \mid \theta) = \argmax_{\mathcal{X}_e\in X(S_e) 
    }\{C(S_e, \mathcal{X}_e) + \gamma\mathbb{E}[\widetilde{V}_{e+1}(\widetilde{S}_{e+1} \mid S_e, \mathcal{X}_e)]\} 
\end{equation}
where, as before, $\theta$ captures all tuning parameters, 
$\widetilde{S}_{e+1}$ is the approximation of state at the epoch $e+1$, and $\widetilde{V}_{e+1}(\widetilde{S}_{e+1})$  represents the approximation of the value function of being in state $\widetilde{S}_{e+1}$. As VFA-CF, it belongs to VFA since it also approximates the future impact (cost-to-go, or value function) of the current action (route 0) using the future routes returned by the batch approach, which will be explained in the following.

The action $\tilde{X}^{VFA-2S}_e(S_e \mid \theta)$ is determined through the solution of a two-stage stochastic ILP model as presented in the following.

\subsubsection{Two-Stage Stochastic ILP Model}
\label{2-stage: model}
The modeling idea is similar to the one presented in Section \ref{ILP: model}, i.e., routing decisions are taken only for route 0 while future routes are approximated with the batch approach. The main difference with respect to the approach presented in Section \ref{policyFunc} is that a single Two-Stage Stochastic Integer Linear Programming Model (2-SIP) is solved using all scenarios in $\Omega$, instead of solving $|\Omega|$ deterministic ILPs (separately per scenario). Specifically, each scenario $\omega \in \Omega$ is associated with the realization of the release dates of the unknown requests $\{\widetilde{r}^e_i\}_{i \in N_e^{unknown}}$, denoted as  $\widetilde{r}^{e1}, ...,  \widetilde{r}^{e\omega}$. Also, we denote as $p_\omega$ ($0 < p_\omega < 1$) the probability associated with scenario $\omega \in \Omega$ ($\sum_{\omega \in \Omega} p_\omega=1$). We first run the batch algorithm on the realization of each scenario to pre-calculate the following parameters:

\begin{itemize}
    \item $K^\omega$ is the set of indices for all the batches created in scenario $\omega$, $\omega \in \Omega$,
    \item $K_0^\omega$ is the subset of batches with spare capacities in scenario $\omega \in \Omega$,
    \item $\tau^{k\omega}_{start}$ is  the starting time of batch $k\in K^\omega$ in scenario $\omega\in \Omega$,
    \item  $\rho_k^\omega$ is the number of unknown requests assigned to batch $k\in K^\omega$ in scenario $\omega\in \Omega$.
\end{itemize}


As for the decision variables, we keep variables $x_{ij}^0$, $y_i^0$ and  $\tau_{end}^0$ as defined in the deterministic model (\ref{eq:deterministic}).
In addition, we have
the scenario-related variables as follows:
\begin{itemize}
    \item $\pi_i^{k\omega}$: $i \in N_e^{known}, k \in K_0^\omega$, binary variable equal to 1 if known request $i$ is served in batch  $k$ in scenario $\omega \in \Omega$, and 0 otherwise.
    \item $z_k^\omega$: Binary variable equal to 1 if batch $k\in K^\omega$ in scenario $\omega \in \Omega$ is used, and 0 otherwise. For ease of reading, we will simply write $z_{k}^\omega$ instead of $z_{k^\omega}^\omega$.
\end{itemize}

The deterministic equivalent of the 2-SIP is given by model (\ref{eq:model_STO_DEP}). The objective function is: 

\begin{subequations}\label{eq:model_STO_DEP}\begin{equation}
\label{eq:obj_STO_DEP}
 \hbox{max} \sum_{i\in N^{known}_e}y_i^0 + \gamma\sum_{\omega\in \Omega}p_\omega(\sum_{k \in K^\omega}\rho_k^\omega z_k^\omega + \sum_{k \in K_0^\omega}\sum_{i \in N^{known}_e}\pi_i^{k\omega}) 
\end{equation}
The first-stage decision corresponds to determining route 0. In line with the function (\ref{eq:value_function}),  the number of requests served by route 0 is determined by $\sum_{i\in N^{known}_e}y_i^0$ and matches the immediate reward $C(S_e, X_e)$.
The second-stage decisions indexed by $\omega$ are associated with determining the requests served in future routes of scenario $\omega$. Hence the second term in the objective function represents the expected value of future states with respect to the set of scenarios $\Omega$. The constraints are:
\begin{align}
\text{ s.t. \qquad $ (x_{ij}^0,y_i^0,\tau_{end}^0)$  satisfies \eqref{eq:inout_ILP}-\eqref{eq:deadline_ILP}, \eqref{eq:size2_ILP}-\eqref{eq:depot_ILP}
}\notag  \end{align}
\begin{align}
\sum_{k \in K_0^\omega}\pi^{k\omega}_i  &\leq 1 - y_i^0  && \quad \omega \in \Omega, i \in N^{known}_e \label{eq:requestLimit_STO_DEP}\\
 \sum_{i \in N^{known}_e} \pi^{k\omega}_i &\leq (\rho-\rho_k^\omega) z_k^\omega && \quad  \omega \in \Omega,  k \in K_0^\omega \label{eq:wholeBatch_STO_DEP}\\
 z_k^\omega &\geq z^\omega_{k+1} && \quad  \omega \in \Omega,  k= 1, \ldots |K^\omega|-1 \label{eq:routek_STO_DEP}\\
  \tau_{start}^{k\omega} + (T_E-\tau_{start}^{k\omega})(1-z_k^\omega) &\geq \tau_{end}^0 && \quad  \omega\in \Omega,  k\in K^\omega \label{eq:startK_STO_DEP} \\
 \pi_i^{k\omega}, z_k^\omega &\in \{0, 1\} && \omega \in \Omega, i\in N^{known}_e, k \in K^\omega \label{eq:decivars11_DEP} \\
x^0_{ij}, y^0_i &\in \{0,1\} && i,j \in N_e^{known} \label{eq:decivars12_DEP}
\end{align}
\end{subequations}

The objective function \eqref{eq:obj_STO_DEP} maximizes the total number of requests served by route 0 plus the expected number of requests served by future routes. The constraints  \eqref{eq:requestLimit_STO_DEP} - \eqref{eq:startK_STO_DEP}
have a similar meaning as constraints \eqref{eq:requestLimit_ILP}-\eqref{eq:startK_ILP} of Formulation (\ref{eq:deterministic}), with the difference that they are replicated over all scenarios. Finally, \eqref{eq:decivars11_DEP} and \eqref{eq:decivars12_DEP} define the variables domain.

In order to solve this 2-SIP model, we implement a branch-and-cut procedure, similar to the one of the deterministic ILP model, with a dynamic separation of GSECs \eqref{eq:subtour_ILP}. If at least one request is included in such obtained route 0, then a TSP is solved over all requests included to determine the best sequence formed by arcs $x^e_{ij}$, $i, j \in N_e^{known}\cup\{0\}$. 

\section{Computational Experiments}
\label{sec:computational_result}
In this section, we describe the computational experiments we carry out to validate the performance of the two proposed approaches. The code is developed in Python 3.7 and CPLEX version 20.1.0.0 is used to solve all ILPs, run on a single thread with a memory usage limit set to 2GB. 
 We run the simulations on HP Z4 G4 Workstation with Intel(R)
Xeon(R) W-2255 CPU @ 3.70GHz. 

The maximum time interval between consecutive decision epochs $\phi$ is set to 10, which is a sufficiently small value to avoid long waiting times at the depot according to the input instances used in the computational campaign (described in Section \ref{sec:instances}).  

As for implementation details, we remark on the following. 
Seven branch-and-cut algorithms are implemented: a) to calculate upper bounds with perfect information, b) to check whether the conditions of the partially optimal policy are satisfied, c) to calculate the consensus function of the VFA-CF approach, d) to calculate the value function approximation in the VFA approaches (VFA-CF and VFA-2S), e) to get the route for the ME approach {described in Section \ref{sec:myopic}} and f) to calculate the heuristic solution based on the modified OP-rd {described in Section \ref{sec:OPrd_PE}}. All these algorithms rely on the separations of GSECs, which are implemented using lazy cutcallback of the CPLEX solver. To benefit from general-purpose cuts generated by the CPLEX solver, we collect the cuts separated at the root node of the branching tree and then restart the ILP. We set a time limit for the solution of each branch-and-cut as follows: VFA-CF: 5 minutes, VFA-2S: 10 minutes, ME: 10 minutes, and OPrd\_PE: 2 minutes. Note that the deterministic model is allocated with a shorter computing time as it is solved for each scenario. As for the OP-rd heuristic model, the maximum time assigned is set to 2 minutes to be consistent with a practical application setting where a short time is available to take decisions in every decision epoch. All other CPLEX parameters are left at their default values. 
The call to the partial characterization of the optimal policy is skipped in each decision epoch of VFA-CF and VFA-2S when $|N^{known}_e|<0.25|N^{unserved}_e|$ and remaining time $[t_e,T_E] > 0.75T_E$. We consider that for such settings there are small chances that conditions of Proposition \ref{prop:charact} are satisfied.
%
%
Concerning the calculation of upper bounds using model \eqref{eq:base}, we provide CPLEX with a starting feasible solution corresponding to the solution of model \eqref{eq:UB}, and the time limit is set to one hour.

The remainder of this section is organized as follows. We first describe the input instances in Section \ref{sec:instances} and in Section \ref{sec:benchmarks} we present the benchmark approaches we use for comparisons including two myopic approaches (Section \ref{sec:myopic}) and an approach considering point estimation of future information (Section \ref{sec:OPrd_PE}). Finally, Section \ref{sec:compute} compares the two proposed approaches with the benchmark approaches (Section \ref{sec:final_results}) and the experiments for two vehicles (Section \ref{2-vehicle}). 

Additional computational results can be found in Online Appendices including the results related to the tuning of hyper-parameters (Appendix H), which are: the number of scenarios $|\Omega|$, the batch size $\rho$, i.e., the maximum number of parcels included in each future route as used in the batch approach, the discount factor $\gamma$, and parameter $\lambda$ used in the consensus function to determine which requests should be served in route 0 (see Section \ref{consensus}). The results associated with the three values of $|\Omega|\in\{10, 30, 50\}$ show that on average the policy quality is improved when more scenarios are used, but the running time is increased. We opt for $|\Omega| = 30$ to balance both performance aspects. The tuning on four different values of $\rho$ was conducted for $\rho \in \{ 5, 10, 15, 20\}$. The results show that for both VFA-CF and VFA-2S, on average, the performance improves when increasing $\rho$ from 5 to 15, but it deteriorates when increasing to 20. So we set  $\rho$ to $15$. When it comes to discount factor $\gamma$, we do not see a significant difference among the values of $\gamma \in \{ 0.7, 0.8, 0.9, 1.0\}$, and we choose $\gamma=0.8$ for both policies as it gives slightly better results. Finally, the value of parameter $\lambda$ is set to 0.5. We also perform experiments with $\lambda = 0.4$ and 0.6, and the results show no remarkable difference with respect to $\lambda = 0.5$. 
Hyper-parameter tuning is performed on a separate benchmark set of instances that have been excluded from the testing phase (see Section \ref{sec:instances}). Online Appendix G illustrates the benefit of integrating partial characterization of optimal policy within VFA-CF/VFA-2S. Given the benefits of PC and the fact that computing time is reduced or remains reasonable, we
show the results of VFA-CF/VFA-2S with PC in section \ref{sec:compute}.

\subsection{Input Instances}\label{sec:instances}
We use the set of benchmark instances described in \cite{archetti2020dynamic} that are derived from Solomon's instances (\cite{solomon1987algorithms}). These instances are publicly available at \cite{BenchmarkInstances}. Specifically, \IL{instance classes} $C1$, $C2$, $R1$ and $RC1$ are considered. Each instance contains 50 customers and is associated with three parameters $\beta$, $\delta$ and $T_E$. The first two parameters, $\beta$, $\delta$, define the dispersion of release dates and the percentage of customers with dynamic release dates, respectively. We refer to Online Appendix F for more details.
Parameter $T_E$ is the deadline of the delivery service (not defined in the problem studied in \cite{archetti2020dynamic}), which is determined as follows. We first determine the latest actual arrival time of all requests and denote it as $T_{standard}$. Then we generate four instances with $T_E$ equal to $c\, T_{standard}$, where $c \in \{0.6, 0.8, 1.0, 1.2\}$.
For each customer input data and each choice of $\beta$, $\delta$, and $c$, five different instances (with different seeds) are generated in which two are used for hyper-parameter tuning (288 instances), and three are used for the final tests (432 instances). In our instances, we have some parcels
available in the beginning, and they could be dispatched immediately (if necessary). To test the scalability of the proposed approaches, we created 36 additional instances containing 100 customers, where the customers' locations are generated by combining instances C101 and RC101, and dynamic release dates are generated by combining the ones of instances C101 and RC101 accordingly. The instances, along with a detailed description of the instance generation procedure, can be found at \cite{DataFor100Customers}. 

\subsection{Benchmark approaches}
\label{sec:benchmarks}

In the following, we present three benchmark approaches used for comparing with VFA-CF/2S.

\subsubsection{Two myopic approaches (no future information)}\label{sec:myopic}

In contrast to the two proposed VFA approaches that make use of sampling, future information, and exact solutions of ILPs, we propose two myopic approaches as benchmarks that use neither sampling nor future information. One relies on the solution of an ILP while the other is based on a heuristic approach to build the route to compare with. Myopic approaches are commonly used as benchmarks in papers mentioned in the former sections. Examples of such benchmarks include approaches that do not employ sampling or delay, as shown in \cite{voccia2019same}, as well as the explicitly labeled “myopic approach" discussed in \cite{archetti2020dynamic} and \cite{ulmer2019offline}.  Besides, the two myopic methods we propose mimic real-world cases where companies do not use sophisticated optimization tools. The main idea of both approaches is to dispatch the vehicle immediately in every decision epoch by serving all known requests (if any) that can be feasibly served. The difference between the two lies in how the vehicle route (and eventually the subset of known requests to serve) is determined in each decision epoch. 

The first approach denoted as \textit{ME} in the following, works as follows: every time the vehicle is back to the depot, in case there is at least one parcel available, the vehicle is immediately dispatched. All parcels available are served if the deadline is not violated. In case the deadline is violated, there is a need to select a subset of parcels to deliver. This is done through exactly solving an orienteering problem (via a branch-and-cut method) to maximize the number of parcels delivered while satisfying the limited route duration. 


The second myopic approach, denoted as \textit{MH}, differs from the first in the way the vehicle route is constructed, which follows the nearest neighbor idea. Specifically, every time the vehicle is at the depot, and there is at least one parcel available, the dispatching decision is made by selecting first the closest customer location to the depot, then the nearest customer to the one just selected, and so on until the remaining time is not enough to serve any new customer or all parcels available are included on the route. 

\subsubsection{An approach considering point estimation of future information}
\label{sec:OPrd_PE}
The comparison with the myopic approaches described in the former section aims to show the benefit of incorporating information about future events into the decision policy, as made in VFA-CF and VFA-2S. However, VFA-CF/VFA-2S also incorporate the use of sampling along with the approximation of future routes (through the batch approach). Thus, we propose a further comparison to show the benefit of these two components. Specifically, 
we compare VFA-CF and VFA-2S with an ILP-based approach considering point estimation (PE) of future information and no approximation. 
The approach, to which we refer as OPrd\_PE, is based on the OP-rd formulation (\ref{eq:obj_ILP_base}), which uses expected release dates as point estimations and performs exact evaluations of future rewards. At each decision epoch, we set the release dates of each unknown request to their expected values, and then we solve the OP-rd (\ref{eq:obj_ILP_base}) where we impose that route 0 cannot start before the time of the current decision epoch. We then take the first route in the obtained solution. If all requests served in it are available, we execute the route immediately. Otherwise, we wait until the requests arrive according to the actual release dates, and execute the route. In case the returning time of the route is after the deadline, which may happen because the actual release dates can be later than the corresponding expected value, the route is not executed. Note that this approach embeds the optimal solution of an ILP to determine the action associated with each decision epoch, similar to VFA-2S and VFA-CF. However, it calculates the exact value associated with future routes (contrary to what is done in VFA-2S and VFA-CF, where this value is approximated due to the batch approach), which clearly might require a longer computing time.

\subsubsection{Summary of the Characteristics of the Approaches}
\label{sec:chara_summary}
Table \ref{cha_summary} summarizes the key characteristics of the different approaches compared in this section. The myopic methods MH and ME do not employ sampling nor consider future information. In addition, at each decision epoch, the decision in ME is based on the solution of ILPs, while MH is based on the nearest neighbor heuristic. As for OPrd\_PE, it does not utilize a sampling technique; instead, it relies on point estimation and exact evaluation of future rewards. Notably, the decision in OPrd\_PE is based on the solution of ILPs. In contrast, the two VFA approaches, VFA-CF and VFA-2S, employ sampling and embed future information within ILPs. Besides, they leverage the power of approximation through the batch approach to speed up decision-making. 

By comparing VFA approaches to MH, ME and OPrd\_PE, we can demonstrate the benefits of embedding future information and approximation within ILPs for better decision-making. 
\begin{table}[!htp]
\caption{Summary of major characteristics of the tested approaches
}
\label{cha_summary}
\centering
\scalebox{0.8}{\color{black}
\begin{tabular}{l|ccccc}
\hline&                                               \textbf{MH} & \textbf{ME} & \textbf{OPrd\_PE} & \textbf{VFA-CF} & \textbf{VFA-2S} \\ \hline 
\textbf{Sampling}             & \xmark        & \xmark        & \xmark               & \checkmark        & \checkmark        \\
\textbf{Future Information}   & \xmark        & \xmark        & \checkmark(PE)       & \checkmark        & \checkmark        \\
\textbf{Future Approximation} & \xmark        & \xmark        & \xmark               & \checkmark        & \checkmark        \\
\textbf{ILP solution}         & \xmark        & \checkmark    & \checkmark           & \checkmark        & \checkmark   \\\hline   
\end{tabular}}
\end{table}

\subsection{Computational results}
\label{sec:compute}
In the following subsections, 
we present the performance of the VFA approaches by comparing them with the three benchmark approaches, then we show experiments for two vehicles.
\subsubsection{Performance of VFA-CF and VFA-2S}\label{sec:final_results}

In this section, we compare the results of VFA-CF and VFA-2S (with PC) against ME, MH, and OPrd\_PE. The goal is to show the benefit of incorporating the approximation of future routes and future information into exact solution methods to improve sequential decision making. 
The Key Performance Indicator (KPI) used for the comparison is the gap from the best policy found over all approaches. In addition, we also report, for each of the five approaches, the gap with respect to the upper bound with perfect information obtained by solving Formulation \eqref{eq:base}. This latter measure is related to the competitive analysis where the focus is on measuring the discrepancy in policy quality due to lack of information. 

Results are provided in Table \ref{4approaches_betadelta}, condensed by values of $\beta$ and $\delta$,  and graphical representations are presented in Figure 1 (displaying values over $\beta$) and Figure 2 in Online Appendix I. Detailed results for all values of $\beta$, $\delta$ and $c$ can be found in Online Appendix M. 
In addition, summarized results over the value of $c$ can be found in Online Appendix J, while Appendix K shows an example of solutions obtained through the different approaches. The running times reported below include all the preprocessing steps, such as reading input data, implementing the MDP structure, and the runs of Algorithm \ref{simu:Pro}. As running Algorithm \ref{simu:Pro} usually takes less than one second, we do not report it separately.

For each approach, we report the average gap with respect to the best policy found over all approaches (denoted as $gap\%$), the gap with respect to the upper bound with perfect information ($gap_{ub}\%$), the computational time in seconds ($t[s]$) and the number of times the best policy was found, including ties ($freq$). As for the $gap\%$, we calculated it as follows. Given an input instance, let $N_{p}$ denote the value of the policy found by approach $p$. The gap with respect to the best policy found over all approaches is calculated as $ 1 - \frac{N_{p}}{\max_{m \in \{VFA-CF, VFA-2S, ME, MH, OPrd\_PE\}}  N_{m}}$. Instead, the gap with respect to the upper bound is calculated as  $ 1 - \frac{N_{p}}{UB}$, where $UB$ is the value of the upper bound with perfect information. 

Besides, we summarize the detailed statistics of the 432 instances used over different values of $\beta$ and $\delta$ in Tables \ref{detailed_timeHorizons}-\ref{detailed_waiting}. Table \ref{detailed_timeHorizons} lists the average lengths of the instances' planning horizons in minutes. For each solution approach, Table \ref{detailed_routes} presents the average numbers of routes executed; Table \ref{detailed_knownRequests} shows the average numbers of known orders that can be delivered at each decision epoch over the instances, which reflect the size of the ILP models; Table \ref{detailed_decisionEpochs} shows the average numbers of decision epochs and the average running time per decision epoch over the instances; Table \ref{detailed_routeDuration} and Table \ref{detailed_waiting} display the average vehicle traveling and waiting time over the instances in minutes, where the waiting time is calculated by subtracting traveling time from the corresponding length of the planning horizon.

\begin{table}[tb!]
\caption{Average performance of the five approaches (50 customers)}
\label{4approaches_betadelta}
\centering
\begin{subtable}{\linewidth}\centering
\scalebox{0.6}{\color{black}
\begin{tabular}{l|l|l|cccc|cccc}
\toprule
\multicolumn{1}{l}{\textbf{}}     & \textbf{}      & \textbf{}                                & \multicolumn{4}{c}{\textbf{VFA-CF}}                               & \multicolumn{4}{c}{\textbf{VFA-2S}}                               \\\hline
\multicolumn{1}{l}{$\beta$} & $\delta$ & \multicolumn{1}{c|}{\textbf{\#instances}} & gap{[}\%{]}    & gap\_ub{[}\%{]} & t{[}s{]}       & freq          & gap{[}\%{]}    & gap\_ub{[}\%{]} & t{[}s{]}       & freq          \\\hline\hline
\multirow{3}{*}{0.5}              & 0              & 48                                       & 10.00          & 24.24           & 41.03          & 25            & 9.45           & 23.72           & 29.93          & 25            \\
                                  & 0.5            & 48                                       & 10.93          & 23.41           & 48.07          & 20            & 12.28          & 24.51           & 42.48          & 14            \\
                                  & 1              & 48                                       & 12.84          & 25.45           & 105.61         & 15            & 12.67          & 25.24           & 64.10          & 13            \\\hline
\multicolumn{1}{l}{\textbf{}}     & \textbf{AVG}   & \textbf{48}                              & \textbf{11.26} & \textbf{24.36}  & \textbf{64.90} & \textbf{20.0} & \textbf{11.47} & \textbf{24.49}  & \textbf{45.50} & \textbf{17.3} \\\hline\hline
\multirow{3}{*}{1}                & 0              & 48                                       & 3.92           & 19.36           & 54.81          & 21            & 3.39           & 18.85           & 34.64          & 26            \\
                                  & 0.5            & 48                                       & 4.71           & 21.95           & 48.13          & 24            & 3.98           & 21.33           & 33.02          & 22            \\
                                  & 1              & 48                                       & 3.58           & 15.60           & 39.69          & 20            & 4.28           & 16.11           & 31.40          & 17            \\\hline
\multicolumn{1}{l}{\textbf{}}     & \textbf{AVG}   & \textbf{48}                              & \textbf{4.07}  & \textbf{18.97}  & \textbf{47.54} & \textbf{21.7} & \textbf{3.88}  & \textbf{18.76}  & \textbf{33.02} & \textbf{21.7} \\\hline\hline
\multirow{3}{*}{1.5}              & 0              & 48                                       & 1.07           & 7.75            & 35.26          & 36            & 0.64           & 7.40            & 31.68          & 40            \\
                                  & 0.5            & 48                                       & 3.09           & 13.83           & 39.82          & 28            & 2.28           & 13.25           & 33.11          & 26            \\
                                  & 1              & 48                                       & 2.18           & 10.73           & 36.04          & 29            & 2.76           & 11.15           & 33.04          & 26            \\\hline
\textbf{}                         & \textbf{AVG}   & \textbf{48}                              & \textbf{2.11}  & \textbf{10.77}  & \textbf{37.04} & \textbf{31.0} & \textbf{1.89}  & \textbf{10.60}  & \textbf{32.61} & \textbf{30.7} \\\hline\hline
\multicolumn{2}{c}{\textbf{AVG}}                   & \textbf{48}                              & \textbf{5.81}  & \textbf{18.03}  & \textbf{49.83} & \textbf{24.2} & \textbf{5.75}  & \textbf{17.95}  & \textbf{37.04} & \textbf{23.2}
\\\hline
\end{tabular}
}
\end{subtable}
\newline
\vspace*{0.01\linewidth}
\newline
\begin{subtable}{\linewidth}
\centering
\scalebox{0.6}{\color{black}
\begin{tabular}{l|l|l|cccc|cccc|cccc}
\toprule
\multicolumn{1}{l}{\textbf{}}     & \textbf{}      & \textbf{}                                & \multicolumn{4}{c}{\textbf{MH}}                                  & \multicolumn{4}{c}{\textbf{ME}}                                   & \multicolumn{4}{c}{\textbf{OPrd\_PE}}                             \\\hline
\multicolumn{1}{l}{$\beta$} & $\delta$ & \multicolumn{1}{c|}{\textbf{\#instances}} & gap{[}\%{]}    & gap\_ub{[}\%{]} & t{[}s{]}      & freq          & gap{[}\%{]}    & gap\_ub{[}\%{]} & t{[}s{]}       & freq          & gap{[}\%{]}    & gap\_ub{[}\%{]} & t{[}s{]}        & freq          \\\hline\hline
\multirow{3}{*}{0.5}              & 0              & 48                                       & 36.20          & 45.97           & 0.19          & 0             & 23.78          & 34.88           & 82.27          & 6             & 15.38          & 26.73           & 154.42          & 30            \\
                                  & 0.5            & 48                                       & 30.21          & 38.81           & 0.21          & 2             & 20.60          & 30.45           & 30.81          & 10            & 16.67          & 26.41           & 171.74          & 33            \\
                                  & 1              & 48                                       & 34.79          & 43.24           & 0.20          & 0             & 24.58          & 34.09           & 67.25          & 0             & 4.31           & 16.44           & 199.98          & 34            \\\hline
\multicolumn{1}{l}{\textbf{}}     & \textbf{AVG}   & \textbf{48}                              & \textbf{33.73} & \textbf{42.67}  & \textbf{0.20} & \textbf{0.7}  & \textbf{22.99} & \textbf{33.14}  & \textbf{60.11} & \textbf{5.3}  & \textbf{12.12} & \textbf{23.19}  & \textbf{175.38} & \textbf{32.3} \\\hline\hline
\multirow{3}{*}{1}                & 0              & 48                                       & 17.98          & 30.10           & 0.22          & 0             & 9.20           & 23.11           & 119.77         & 13            & 11.68          & 25.88           & 623.72          & 21            \\
                                  & 0.5            & 48                                       & 19.22          & 32.65           & 0.23          & 3             & 11.45          & 26.90           & 86.28          & 7             & 19.82          & 32.15           & 508.52          & 24            \\
                                  & 1              & 48                                       & 15.67          & 25.11           & 0.24          & 6             & 8.30           & 19.26           & 73.11          & 19            & 4.36           & 15.81           & 557.29          & 33            \\\hline
\multicolumn{1}{l}{\textbf{}}     & \textbf{AVG}   & \textbf{48}                              & \textbf{17.62} & \textbf{29.29}  & \textbf{0.23} & \textbf{3.0}  & \textbf{9.65}  & \textbf{23.09}  & \textbf{93.05} & \textbf{13.0} & \textbf{11.95} & \textbf{24.61}  & \textbf{563.18} & \textbf{26.0} \\\hline\hline
\multirow{3}{*}{1.5}              & 0              & 48                                       & 7.27           & 12.79           & 0.21          & 18            & 2.20           & 8.60            & 38.54          & 28            & 13.74          & 17.44           & 788.30          & 32            \\
                                  & 0.5            & 48                                       & 8.32           & 18.28           & 0.24          & 12            & 4.07           & 14.71           & 8.46           & 29            & 10.16          & 19.18           & 636.08          & 31            \\
                                  & 1              & 48                                       & 7.12           & 14.72           & 0.23          & 16            & 3.09           & 11.42           & 38.71          & 29            & 3.62           & 11.60           & 698.63          & 35            \\\hline
\textbf{}                         & \textbf{AVG}   & \textbf{48}                              & \textbf{7.57}  & \textbf{15.26}  & \textbf{0.23} & \textbf{15.3} & \textbf{3.12}  & \textbf{11.58}  & \textbf{28.57} & \textbf{28.7} & \textbf{9.17}  & \textbf{16.07}  & \textbf{707.67} & \textbf{32.7} \\\hline\hline
\multicolumn{2}{c}{\textbf{AVG}}                   & \textbf{48}                              & \textbf{19.64} & \textbf{29.07}  & \textbf{0.22} & \textbf{6.3}  & \textbf{11.92} & \textbf{22.60}  & \textbf{60.58} & \textbf{15.7} & \textbf{11.08} & \textbf{21.29}  & \textbf{482.08} & \textbf{30.3}
\\\bottomrule
\end{tabular}
}
\end{subtable}
\end{table}

\begin{table}[tb!]
\begin{minipage}{0.25\linewidth}
    \caption{Length of time horizons.}
    \label{detailed_timeHorizons}
    \centering
    \scalebox{0.56}{\color{black}
    \begin{tabular}{l|l||c}
    \hline
    $\beta$                       & $\delta$                      & \textbf{timeHorizon} \\ \hline\hline
    0.5                           & 0                             & 277.94                                \\
                                  & 0.5                           & 276.63                                \\
                                  & 1                             & 308.04                                \\
                                  & \textbf{AVG} & \textbf{287.53}                                \\ \hline
    1                             & 0                             & 550.63                                \\
                                  & 0.5                           & 535.08                                \\
                                  & 1                             & 614.94                                \\
                                  & \textbf{AVG} & \textbf{566.88}                                \\ \hline
    1.5                           & 0                             & 865.63                                \\
                                  & 0.5                           & 771.88                                \\
                                  & 1                             & 805.90                                \\
                                  & \textbf{AVG} & \textbf{814.47}                                \\ \hline
    \textbf{AVG} &                               & \textbf{556.29}      \\ \hline
    \end{tabular}}
\end{minipage}\hfill
\begin{minipage}{.35\linewidth}
    \caption{Number of routes.}
    \label{detailed_routes}
    \centering
    \scalebox{0.56}{\color{black}
    \begin{tabular}{l|l||c|c|c|c|c}
    \hline
    $\beta$                       & $\delta$                      & \textbf{VFA-CF} & \textbf{VFA-2S} & \textbf{MH}   & \textbf{ME}   & \textbf{OPrd\_PE} \\ \hline\hline
    0.5                           & 0                             & 2.38                             & 2.38                             & 2.38                           & 2.31                           & 2.27                               \\
                                  & 0.5                           & 2.44                             & 2.42                             & 2.35                           & 2.38                           & 2.54                               \\
                                  & 1                             & 2.58                             & 2.58                             & 2.44                           & 2.42                           & 2.85                               \\
                                  & \textbf{AVG} & \textbf{2.47}                             & \textbf{2.46}                             & \textbf{2.39}                           & \textbf{2.37}                           & \textbf{2.56}                               \\ \hline
    1                             & 0                             & 4.44                             & 4.38                             & 3.38                           & 3.38                           & 6.06                               \\
                                  & 0.5                           & 4.13                             & 4.04                             & 3.21                           & 3.40                           & 5.19                               \\
                                  & 1                             & 4.23                             & 4.29                             & 3.69                           & 3.98                           & 6.46                               \\
                                  & \textbf{AVG} & \textbf{4.26}                             & \textbf{4.24}                             & \textbf{3.42}                           & \textbf{3.58}                           & \textbf{5.90}                               \\ \hline
    1.5                           & 0                             & 5.54                             & 5.60                             & 4.81                           & 5.19                           & 9.56                               \\
                                  & 0.5                           & 5.48                             & 5.50                             & 4.56                           & 4.96                           & 7.58                               \\
                                  & 1                             & 5.56                             & 5.65                             & 4.83                           & 5.23                           & 8.48                               \\
                                  & \textbf{AVG} & \textbf{5.53}                             & \textbf{5.58}                             & \textbf{4.74}                           & \textbf{5.13}                           & \textbf{8.54}                               \\ \hline
    \textbf{AVG} &                               & \textbf{4.09}   & \textbf{4.09}   & \textbf{3.52} & \textbf{3.69} & \textbf{5.67}     \\ \hline
    \end{tabular}}
\end{minipage} \hfill
\begin{minipage}{.35\linewidth}
\caption{Number of known requests at each decision epoch.}
\label{detailed_knownRequests}
\centering
\scalebox{0.56}{\color{black}
\begin{tabular}{l|l||c|c|c|c|c}
\toprule
\textbf{$\beta$} & \textbf{$\delta$} & \textbf{VFA-CF} & \textbf{VFA-2S} & \textbf{MH}   & \textbf{ME}   & \textbf{OPrd\_PE} \\\hline\hline
0.5           & 0              & 15.27           & 15.11           & 17.02          & 17.05          & 19.88              \\
              & 0.5            & 14.99           & 15.08           & 16.06          & 16.88          & 19.31              \\
              & 1              & 14.31           & 14.43           & 16.35          & 15.41          & 18.86              \\
              & \textbf{AVG}            & \textbf{14.86}           & \textbf{14.87}           & \textbf{16.48}          & \textbf{16.45}          & \textbf{19.35}              \\\hline
1             & 0              & 13.10           & 13.19           & 13.44          & 12.50          & 16.62              \\
              & 0.5            & 12.49           & 12.58           & 13.02          & 12.70          & 15.54              \\
              & 1              & 10.91           & 11.49           & 12.63          & 11.66          & 16.55              \\
              & \textbf{AVG}            & \textbf{12.17}           & \textbf{12.42}           & \textbf{13.03}          & \textbf{12.29}          & \textbf{16.24}              \\\hline
1.5           & 0              & 8.13            & 9.71            & 9.05           & 8.52           & 14.79              \\
              & 0.5            & 9.53            &10.16            & 9.48           & 8.79           & 14.77              \\
              & 1              & 8.23            & 8.91            & 9.29           & 8.86           & 13.37              \\
              & \textbf{AVG}            & \textbf{8.63}            & \textbf{9.59}            & \textbf{9.27}           & \textbf{8.72}           & \textbf{14.31}              \\\hline
\textbf{AVG}  & \textbf{}      & \textbf{11.88}  & \textbf{12.30}  & \textbf{12.93} & \textbf{12.48} & \textbf{16.63}    
\\\hline
\end{tabular}
}\end{minipage}
\end{table}

\begin{table}[h]
\caption{Number of decision epochs and running time per decision epoch.}
\label{detailed_decisionEpochs}
\centering
\scalebox{0.56}{\color{black}
\begin{tabular}{l|l||c|c|c|c|c|c|c|c|c|c}
\toprule
\multicolumn{1}{l}{\textbf{}}     & \textbf{}      & \multicolumn{2}{c}{\textbf{VFA-CF}} & \multicolumn{2}{|c}{\textbf{VFA-2S}} & \multicolumn{2}{|c}{\textbf{MH}} & \multicolumn{2}{|c}{\textbf{ME}} & \multicolumn{2}{|c}{\textbf{OPrd\_PE}} \\\hline\hline
\multicolumn{1}{l}{\textbf{$\beta$}} & \textbf{$\delta$} & \#epochs         & t[s] per epoch   & \#epochs         & t[s] per epoch   & \#epochs       & t[s] per epoch & \#epochs       & t[s] per epoch & \#epochs          & t[s] per epoch    \\\hline
\multirow{3}{*}{0.5}              & 0              & 6.19             & 6.63             & 5.94             & 5.04             & 3.19           & 0.06           & 3.13           & 26.33          & 3.02              & 51.12             \\
                                  & 0.5            & 6.56             & 7.32             & 6.67             & 6.37             & 3.00           & 0.07           & 3.21           & 9.60           & 3.33              & 51.52             \\
                                  & 1              & 7.46             & 14.16            & 7.44             & 8.62             & 3.31           & 0.06           & 3.15           & 21.38          & 3.42              & 58.53             \\
\multicolumn{1}{l|}{\textbf{}}     & \textbf{AVG}   & \textbf{6.74}    & \textbf{9.37}    & \textbf{6.68}    & \textbf{6.68}    & \textbf{3.17}  & \textbf{0.06}  & \textbf{3.16}  & \textbf{19.10} & \textbf{3.26}     & \textbf{53.72}    \\\hline
\multirow{3}{*}{1}                & 0              & 6.85             & 8.00             & 6.83             & 5.07             & 4.00           & 0.05           & 4.25           & 28.18          & 6.83              & 91.28             \\
                                  & 0.5            & 8.10             & 5.94             & 7.79             & 4.24             & 3.98           & 0.06           & 4.25           & 20.30          & 6.00              & 84.75             \\
                                  & 1              & 7.63             & 5.21             & 7.04             & 4.46             & 4.54           & 0.05           & 4.85           & 15.06          & 7.06              & 78.91             \\
\multicolumn{1}{l|}{\textbf{}}     & \textbf{AVG}   & \textbf{7.53}    & \textbf{6.38}    & \textbf{7.22}    & \textbf{4.59}    & \textbf{4.17}  & \textbf{0.05}  & \textbf{4.45}  & \textbf{21.18} & \textbf{6.63}     & \textbf{84.98}    \\\hline
\multirow{3}{*}{1.5}              & 0              & 8.98             & 3.93             & 6.46             & 4.90             & 5.94           & 0.03           & 6.25           & 6.17           & 10.10             & 78.02             \\
                                  & 0.5            & 9.31             & 4.28             & 7.73             & 4.28             & 5.48           & 0.04           & 5.96           & 1.42           & 8.13              & 78.29             \\
                                  & 1              & 8.56             & 4.21             & 7.48             & 4.42             & 5.58           & 0.04           & 6.13           & 6.32           & 9.13              & 76.56             \\
\textbf{}                         & \textbf{AVG}   & \textbf{8.95}    & \textbf{4.14}    & \textbf{7.22}    & \textbf{4.54}    & \textbf{5.67}  & \textbf{0.04}  & \textbf{6.11}  & \textbf{4.63}  & \textbf{9.12}     & \textbf{77.62}    \\\hline
\multicolumn{2}{l||}{\textbf{AVG}}                   & \textbf{7.74}    & \textbf{6.63}    & \textbf{7.04}    & \textbf{5.27}    & \textbf{4.34}  & \textbf{0.05}  & \textbf{4.57}  & \textbf{14.97} & \textbf{6.34}     & \textbf{72.11}   \\\bottomrule
\end{tabular}
}\end{table}

\begin{table}[h]
\begin{minipage}{0.5\linewidth}
\caption{Traveling time.}
\label{detailed_routeDuration}
\centering
\scalebox{0.56}{\color{black}
\begin{tabular}{l|l||c|c|c|c|c}
\hline
\textbf{$\beta$} & \textbf{$\delta$} & \textbf{VFA-CF} & \textbf{VFA-2S} & \textbf{MH}     & \textbf{ME}     & \textbf{OPrd\_PE} \\ \hline\hline
0.5                               & 0                                  & 263.27                           & 264.00                           & 272.06                           & 271.56                           & 209.65                              \\
                                  & 0.5                                & 262.73                           & 261.15                           & 273.90                           & 271.17                           & 201.06                              \\
                                  & 1                                  & 290.46                           & 290.69                           & 300.98                           & 303.83                           & 256.08                              \\
                                  & \textbf{AVG}      & \textbf{272.15}                           & \textbf{271.94}                           & \textbf{282.31}                           & \textbf{282.19}                           & \textbf{222.26}                              \\ \hline
1                                 & 0                                  & 529.44                           & 529.15                           & 546.13                           & 531.50                           & 451.10                              \\
                                  & 0.5                                & 507.04                           & 507.79                           & 525.81                           & 523.21                           & 415.58                              \\
                                  & 1                                  & 576.29                           & 576.50                           & 593.54                           & 586.85                           & 535.19                              \\
                                  & \textbf{AVG}      & \textbf{537.59}                           & \textbf{537.81}                           & \textbf{555.16}                           & \textbf{547.19}                           & \textbf{467.29}                              \\ \hline
1.5                               & 0                                  & 789.79                           & 791.33                           & 799.94                           & 797.44                           & 756.19                              \\
                                  & 0.5                                & 717.54                           & 716.46                           & 738.00                           & 725.27                           & 640.31                              \\
                                  & 1                                  & 755.19                           & 754.42                           & 778.21                           & 762.83                           & 719.31                              \\
                                  & \textbf{AVG}      & \textbf{754.17}                           & \textbf{754.07}                           & \textbf{772.05}                           & \textbf{761.85}                           & \textbf{705.27}                              \\ \hline
\textbf{AVG}     &        & \textbf{521.31} & \textbf{521.28} & \textbf{536.51} & \textbf{530.41} & \textbf{464.94}    \\ \hline
\end{tabular}
}
\end{minipage}
\begin{minipage}{0.5\linewidth}
\caption{Waiting time.}
\label{detailed_waiting}
\centering
\scalebox{0.56}{\color{black}
\begin{tabular}{l|l||c|c|c|c|c}
\hline
\textbf{$\beta$} & \textbf{$\delta$} & \textbf{VFA-CF} & \textbf{VFA-2S} & \textbf{MH}    & \textbf{ME}    & \textbf{OPrd\_PE} \\ \hline
0.5                               & 0                                  & 14.67                            & 13.94                            & 5.88                            & 6.38                            & 68.29                               \\
                                  & 0.5    & 13.90      & 15.48  & 2.73 & 5.46 & 75.56 \\
                                  & 1 & 17.58 & 17.35 & 7.06& 4.21& 51.96 \\
                                  & \textbf{AVG}      & \textbf{15.38}& \textbf{15.59} & \textbf{5.22}& \textbf{5.35}& \textbf{65.27}  \\ \hline
1                                 & 0                                  & 21.19                            & 21.48                            & 4.50                            & 19.13                           & 99.52                               \\
                                  & 0.5  & 28.04 & 27.29& 9.27& 11.88& 119.50\\
                                  & 1  & 38.65 & 38.44& 21.40& 28.08& 79.75\\
                                  & \textbf{AVG}      & \textbf{29.29} & \textbf{29.07}& \textbf{11.72}& \textbf{19.69}& \textbf{99.59}\\ \hline
1.5                               & 0                                  & 75.83                            & 74.29                            & 65.69                           & 68.19                           & 109.44                              \\
                                  & 0.5& 54.33  & 55.42& 33.88& 46.60 & 131.56\\
                                  & 1 & 50.71& 51.48& 27.69& 43.06& 86.58\\
                                  & \textbf{AVG}      & \textbf{60.29}& \textbf{60.40}& 42.42 & \textbf{52.62}& \textbf{109.19}\\ \hline
\textbf{AVG}     & \textbf{}         & \textbf{34.99}  & \textbf{35.02}  & \textbf{19.79} & \textbf{25.89} & \textbf{91.35}     \\ \hline
\end{tabular}
}\end{minipage}\end{table}


\textbf{Comparison between VFA-CF and VFA-2S} Focusing first on the KPIs related to the best-known policy (gap[\%] and freq) and on the comparison between VFA-2S and VFA-CF in Table \ref{4approaches_betadelta}, we notice that, on average, the results obtained with VFA-2S are associated with the smallest gaps, being anyway very similar to the ones obtained with VFA-CF. When considering the number of best policies found, we see VFA-CF behaves better than VFA-2S. However,  differences are negligible. We conduct a t-test on the objective values obtained by the two approaches and get a p-value of 0.98, which is significantly greater than 0.05. Thus there is not enough evidence to suggest a significant difference in objective values achieved by the two algorithms, or the chances of the two having different results are low. We instead observe a more remarkable difference in terms of computing time, with VFA-CF being slower than VFA-2S. Specifically, 
the critical case is the one with  $\beta = 0.5$ and $\delta = 1$, where the computing time of VFA-CF  is almost two times the one of VFA-2S. This might stem from the fact that VFA-CF Formulation \eqref{eq:deterministic} is solved for each scenario while VFA-2S involves solving the deterministic equivalent problem (DEP). Solving DEP can be computationally faster than solving multiple scenario problems on a low number of scenarios and considering I/O overheads. 
Moving to the competitive analysis, we see that VFA-CF and VFA-2S produce results between 7\% and 25\% from the upper bound with perfect information.

\textbf{Comparison with the benchmark approaches} Focusing now on myopic approaches, they are largely outperformed in terms of policy quality by VFA approaches. In terms of computing time, while the one of MH is negligible, for ME, it is comparable to the one of VFA-2S and VFA-CF (despite achieving much worse policies). From Tables \ref{detailed_routes} and \ref{detailed_routeDuration}, we find that MH and ME execute a smaller number of routes, but these routes are longer in duration. As they end up serving fewer requests, we can infer that they potentially overlook the benefits of incorporating future requests to optimize and consolidate deliveries. This observation underscores the importance of considering future information when making sequential decisions. 

Regarding OPrd\_PE, while it often produces best policies, its performance is inconsistent across instances, and the average running time is significantly longer compared to other approaches. For the effect on $\delta$, we see that when $\delta$ is 0, VFA approaches have the smallest gaps. When $\delta$ is 1, OPrd\_PE improves significantly, but this is not true for others. This can be attributed to the decreasing uncertainties as the vehicle approaches the depot, leading to more precise estimations of release dates (see Online Appendix F for details). When $\delta$ equals 1, indicating dynamic updates of all customers' release dates, the uncertainties associated with the release dates of unknown parcels decrease as they approach the depot, so point estimations of the release dates of parcels become more precise. OPrd\_PE builds the first route by taking some known and soon-to-arrive unknown requests. Proximity of estimations to actual release dates ensures more on-time scheduling, resulting in a high number of served requests. Conversely, without dynamic updates of release dates, the estimations are less precise, so the routes proposed by OPrd\_PE have a higher chance of being delayed. Notably, OPrd\_PE's performance is not stable across settings. In contrast, VFA-CF and VFA-2S exhibit more robust performance as they generally outperform all others in gaps, time, and frequency of achieving the best. From Tables \ref{detailed_routes} and \ref{detailed_routeDuration}, we observe that OPrd\_PE executes more but shorter routes than others, which may not be cost-effective. Additionally, OPrd\_PE has a long runtime (8 minutes on average), which is attributed to its higher model complexity that includes both known and unknown requests. This shows the benefit of the batch approach proposed in this paper: it reduces ILP complexity without affecting policy quality.

In addition, we observe that, in most approaches, gap$_{ub}$[\%] decreases when $\beta$ increases. This might be due to the fact that, when the release dates are more dispersed, the best policy is to dispatch the vehicle immediately with a subset of known requests, thus avoiding waiting too long at the depot.
Instead, when the release dates are less dispersed, determining whether to wait or to start dispatching some parcels becomes more challenging. This is also witnessed by the improved performance, for larger $\beta$ values, of myopic approaches,  which are based on the idea of dispatching the vehicle immediately with the largest possible subset of known requests. 

To examine the necessity of considering both stochastic and dynamic release dates, we benchmark our VFA approaches against a purely stochastic setting in which no dynamic updates of release dates are used. We therefore compare VFA approaches with an alternative approach, called  OPrd\_Sto, that solves the problem at time 0 (using the point estimates sampled from the stochastic information) and does not update the decisions afterward. The results from these experiments reveal a significant performance gap:  
 OPrd\_Sto takes 30 times more runtime compared to our VFA approaches and serves 28\% fewer requests on average. Further details can be found in Online Appendix L.

\textbf{Scalability test} We also conduct additional scalability tests on 36 instances containing 100 customers and present the results in Table \ref{4approaches_100}. Focusing first on VFA approaches, we observe that their gaps are similar. While the running time increases for both approaches, the one of VFA-CF is much longer compared to VFA-2S. In addition to the explanation provided for the test involving 50 customers, we identify two potential reasons contributing to the extended running time of VFA-CF: (1) As described in Section \ref{consensus}, whenever the trip duration for serving the selected customers from the consensus function exceeds the remaining time, an orienteering problem model is solved to determine the final requests to be served. In contrast, VFA-2S does not involve such a step. (2) The time limit for running VFA-CF Formulation \eqref{eq:deterministic} is set at 5 minutes, and we run that formulation 30 times (as there are 30 scenarios) at each decision epoch. Conversely, VFA-2S has a time limit of 10 minutes and is solved only once.

Next, in comparison with benchmark approaches, VFA approaches consistently exhibit minimum gaps, approximately 2\%, which is much smaller than those of the other approaches (over 30\%). While the myopic approaches are still fast, especially the simple MH approach, they achieve much worse policies compared to VFA approaches in most cases. Scaling from 50 to 100 customers, the performance of OPrd\_PE greatly deteriorates and the running time increases significantly. This outcome is anticipated, given the explosion in model size and search space, making it a challenge to find an optimal solution within the constrained running time.

\begin{table}[tb!]
\caption{Average performance of the five approaches (100 customers)}
\label{4approaches_100}
\centering
\scalebox{0.6}{\color{black}
\begin{tabular}{l|l|l|ccc|ccc|ccc|ccc|ccc}
\toprule
\multicolumn{1}{l}{\textbf{}}     & \textbf{}      & \textbf{}                                & \multicolumn{3}{c|}{\textbf{VFA-CF}}            & \multicolumn{3}{c|}{\textbf{VFA-2S}}            & \multicolumn{3}{c|}{\textbf{MH}}               & \multicolumn{3}{c|}{\textbf{ME}}                 & \multicolumn{3}{c}{\textbf{OPrd\_PE}}            \\\hline
\multicolumn{1}{l|}{\textbf{$\beta$}} & \textbf{$\delta$} & \multicolumn{1}{c|}{\textbf{\#instances}} & gap{[}\%{]}   & t{[}s{]}        & freq         & gap{[}\%{]}   & t{[}s{]}        & freq         & gap{[}\%{]}    & t{[}s{]}      & freq         & gap{[}\%{]}    & t{[}s{]}        & freq         & gap{[}\%{]}    & t{[}s{]}          & freq         \\\hline\hline
\multirow{3}{*}{0.5}              & 0              & 4                                        & 1.29          & 117.77          & 3            & 1.29          & 52.17           & 3            & 32.81          & 0.35          & 0            & 23.03          & 170.86          & 0            & 33.30          & 1552.86           & 1            \\
                                  & 0.5            & 4                                        & 1.23          & 515.22          & 3            & 1.23          & 63.55           & 3            & 41.97          & 0.34          & 0            & 26.15          & 176.88          & 0            & 23.40          & 1394.72           & 1            \\
                                  & 1              & 4                                        & 10.16         & 649.99          & 3            & 10.16         & 67.29           & 3            & 38.96          & 0.33          & 0            & 31.17          & 66.39           & 0            & 14.38          & 6243.51           & 1            \\\hline
\multicolumn{1}{l}{\textbf{}}     & \textbf{AVG}   & \textbf{4}                               & \textbf{4.23} & \textbf{427.66} & \textbf{3.0} & \textbf{4.23} & \textbf{61.00}  & \textbf{3.0} & \textbf{37.91} & \textbf{0.34} & \textbf{0.0} & \textbf{26.78} & \textbf{138.04} & \textbf{0.0} & \textbf{23.69} & \textbf{3063.70}  & \textbf{1.0} \\\hline\hline
\multirow{3}{*}{1}                & 0              & 4                                        & 3.56          & 203.42          & 2            & 3.60          & 63.50           & 2            & 31.52          & 0.33          & 0            & 27.57          & 151.25          & 1            & 64.35          & 12160.15          & 0            \\
                                  & 0.5            & 4                                        & 3.07          & 327.95          & 3            & 0.00          & 62.23           & 4            & 37.55          & 0.34          & 0            & 37.47          & 0.73            & 0            & 26.10          & 14439.80          & 0            \\
                                  & 1              & 4                                        & 0.00          & 1649.04         & 4            & 0.00          & 209.57          & 4            & 41.34          & 0.36          & 0            & 40.56          & 0.75            & 0            & 31.18          & 14870.07          & 0            \\\hline
\multicolumn{1}{l}{\textbf{}}     & \textbf{AVG}   & \textbf{4}                               & \textbf{2.21} & \textbf{726.80} & \textbf{3.0} & \textbf{1.20} & \textbf{111.76} & \textbf{3.3} & \textbf{36.81} & \textbf{0.34} & \textbf{0.0} & \textbf{35.20} & \textbf{50.91}  & \textbf{0.3} & \textbf{40.54} & \textbf{13823.34} & \textbf{0.0} \\\hline\hline
\multirow{3}{*}{1.5}              & 0              & 4                                        & 0.00          & 336.78          & 4            & 1.75          & 167.87          & 2            & 43.42          & 0.35          & 0            & 42.69          & 0.78            & 0            & 27.47          & 16658.04          & 0            \\
                                  & 0.5            & 4                                        & 0.45          & 99.89           & 3            & 0.00          & 47.46           & 4            & 33.73          & 0.37          & 0            & 29.42          & 0.80            & 0            & 46.07          & 9948.25           & 0            \\
                                  & 1              & 4                                        & 0.00          & 102.53          & 4            & 0.00          & 55.34           & 4            & 42.68          & 0.37          & 0            & 42.32          & 0.69            & 0            & 51.63          & 14281.81          & 0            \\\hline
\textbf{}                         & \textbf{AVG}   & \textbf{4}                               & \textbf{0.15} & \textbf{179.73} & \textbf{3.7} & \textbf{0.58} & \textbf{90.22}  & \textbf{3.3} & \textbf{39.95} & \textbf{0.36} & \textbf{0.0} & \textbf{38.14} & \textbf{0.75}   & \textbf{0.0} & \textbf{41.72} & \textbf{13629.37} & \textbf{0.0} \\\hline\hline
\multicolumn{2}{c}{\textbf{AVG}}                   & \textbf{4}                               & \textbf{2.20} & \textbf{444.73} & \textbf{3.2} & \textbf{2.00} & \textbf{87.66}  & \textbf{3.2} & \textbf{38.22} & \textbf{0.35} & \textbf{0.0} & \textbf{33.38} & \textbf{63.24}  & \textbf{0.1} & \textbf{35.32} & \textbf{10172.14} & \textbf{0.3}
\\\hline
\end{tabular}
}
\end{table}

\textbf{Summary} Overall, both VFA-2S and VFA-CF largely outperform the other approaches, among them, the myopic ones being considered as proxies of practical policies. The difference in policy quality between VFA-2S and VFA-CF is marginal. However, as VFA-2S is faster, it is a good compromise between policy quality and computational burden.
To summarize, we showed the benefits of:
\begin{itemize}
    \item approximating future routes through a batch approach that helped largely reduce the computing time (with respect to OPrd\_PE) without affecting policy quality;
    \item taking into account future information in the decision process by comparing the VFA-CF/VFA-2S with two myopic approaches, which provide vastly worse results.
\end{itemize}

\subsubsection{Multiple vehicles}
\label{2-vehicle}
In this section, we present experiments where two vehicles are available for multiple dispatches in an area during the day. These tests are aimed at showing how VFA-CF  and VFA-2S can be adapted to this setting in terms of scalability and policy quality.
The approaches are modified as follows: at each decision epoch, each vehicle is treated individually and independently of the status of the other vehicle, i.e., without considering the existence of the other vehicle (e.g., without leaving some parcels for the other to deliver), according to the approaches described above. We note that the two vehicles still share the same depot and the same set of customer demands. 

This two-vehicle approach offers a first attempt at tackling the multi-vehicle case to investigate the viability of our models, providing a feasible approach. The MDP model (definitions of decision epochs, states, decisions, etc) for the multi-vehicle case may need to be modified. However, we believe that similar building blocks based on the batch approach and ILP models can be adapted for the multiple-vehicle case.

In Table \ref{tab-2vehicle}, we show the average running time with one vehicle and two vehicles under the column named t[s], the average number of requests served with two vehicles, the average and maximum improvement of the policy with two vehicles with respect to the policy with one vehicle only, and the number of times the policy with two vehicles improved over the one with one vehicle only. Compared to the single-vehicle version, the two-vehicle version improves over 106 instances out of 130 on average, serving more than $31\%$ customers over the improved instances.
Notably, the number of served requests doubled for certain instances, resulting in $100\%$ improvement. It is worth mentioning that the simulation running time increases in the two-vehicle version, and it nearly doubles for the approach with consensus function (but still very fast), compared to the single-vehicle version. This increase in running time is expected as the simulation is executed more frequently due to the availability of two vehicles. In addition, we find that the average number of requests served by VFA-CF and VFA-2S are similar, with a slight advantage for VFA-2S.
\begin{table}[hpt]
\caption{Performance of Two Vehicles Compared to Single Vehicle (50 customers)}
\label{tab-2vehicle}
\centering
\scalebox{0.59}{\color{black}
\begin{tabular}{l|c||cc|c|ccc|cc|c|ccc}
\toprule
              &  & \multicolumn{6}{c|}{\textbf{VFA-CF}} & \multicolumn{6}{c}{\textbf{VFA-2S}}\\\hline
              &                & \multicolumn{2}{c|}{t[s]}        & \multicolumn{1}{c}{\multirow{2}{*}{\#served}} & \multicolumn{3}{|c|}{Improvement}                   & \multicolumn{2}{c|}{t[s]}        & \multicolumn{1}{c}{\multirow{2}{*}{\#served}} & \multicolumn{3}{|c}{Improvement}                   \\
              & \#instances    & 1Vehicle       & 2Vehicles      & \multicolumn{1}{c}{}                          & \multicolumn{1}{|c}{AVG(\%)}         & MAX(\%)         & \#impr. & 1Vehicle       & 2Vehicles      & \multicolumn{1}{c}{}                          & \multicolumn{1}{|c}{AVG(\%)}         & MAX(\%)         & \#impr. \\\hline\hline
$\delta$=0       & 144            & 43.70          & 114.46         & 41.11                                         & 35.85          & 88.24           & 111            & 32.08          & 46.48          & 41.19                                         & 35.54          & 88.24           & 111            \\
$\delta$=0.5     & 144            & 45.34          & 61.57          & 39.13                                         & 33.07          & 80.00           & 124            & 36.20          & 46.56          & 39.03                                         & 32.40          & 100.00          & 125            \\
$\delta$=1       & 144            & 60.45          & 64.72          & 40.95                                         & 29.67          & 100.00          & 119            & 42.85          & 49.46          & 41.06                                         & 30.61          & 100.00          & 119            \\\hline
$\beta$=0.5      & 144            & 64.90          & 113.08         & 31.96                                         & 49.50          & 100.00          & 143            & 45.50          & 43.10          & 31.84                                         & 49.60          & 100.00          & 143            \\
$\beta$=1        & 144            & 47.54          & 60.56          & 43.32                                         & 24.53          & 75.00           & 128            & 33.02          & 45.78          & 43.46                                         & 24.63          & 76.92           & 129            \\
$\beta$=1.5      & 144            & 37.04          & 67.12          & 45.92                                         & 16.78          & 60.00           & 83             & 32.61          & 53.62          & 45.98                                         & 16.48          & 55.00           & 83             \\\hline
$c$=0.6         & 108            & 44.88          & 63.66          & 30.13                                         & 42.98          & 100.00          & 108            & 33.04          & 48.41          & 30.29                                         & 42.60          & 100.00          & 108            \\
$c$=0.8         & 108            & 47.34          & 62.46          & 39.58                                         & 32.75          & 93.75           & 101            & 33.18          & 46.53          & 39.51                                         & 32.84          & 88.24           & 101            \\
$c$=1           & 108            & 57.73          & 64.38          & 44.21                                         & 28.03          & 87.50           & 79             & 40.50          & 48.54          & 44.23                                         & 27.83          & 87.50           & 79             \\
$c$=1.2         & 108            & 49.36          & 130.50         & 47.67                                         & 21.93          & 80.95           & 66             & 41.45          & 46.53          & 47.68                                         & 22.72          & 84.00           & 67             \\\hline
\textbf{AVG/MAX} & \textbf{129.6} & \textbf{49.83} & \textbf{80.25} & \textbf{40.40}                                & \textbf{31.51} & \textbf{100.00} & \textbf{106.2} & \textbf{37.04} & \textbf{47.50} & \textbf{40.43}                                & \textbf{31.52} & \textbf{100.00} & \textbf{106.5}  
\\\hline
\end{tabular}
}
\end{table}

\section{Conclusion}\label{sec:conclusion}

To the best of our knowledge, this article is the first to study the orienteering problem with stochastic and dynamic release dates.   
For this challenging sequential decision-making problem, we propose two RL approaches.  Both rely on MDP with: discrete sets of scenarios simulating future realizations of release dates in each decision epoch, approximation of future routes, and exact solutions based on ILP models. However, they employ the approximation of future routes in two different ways: VFA-CF handles each scenario independently and uses a consensus function to decide on the action in each decision epoch. VFA-2S considers all scenarios simultaneously and uses a two-stage stochastic ILP model instead. They are compared with two myopic alternatives  (no future information used) that we consider as proxies of practical policies, and {an alternative ILP-based approach} considering PE of future information. 
Our empirical study confirms the benefits of incorporating approximation of future routes and future information into exact solution methods to improve sequential decision making. 
Both VFA-CF and VFA-2S significantly outperform myopic approaches, not only in terms of best-obtained gaps with respect to the perfect-information upper bounds but also in terms of the frequency with which the best-quality policies are obtained. Compared to the one that exploits PE of future information, {VFA-CF} and VFA{-2S} are less sensitive to the changes in the dynamics of release dates and provide more stable results along with much lower running time. 
The difference in policy quality between VFA-CF and VFA-2S is marginal. However, as VFA-2S is faster, we believe it is a good compromise between policy quality and computational burden.

This paper shows that exact methods provide benefits in solving complex sequential stochastic and dynamic optimization problems, specifically routing problems, which are known to be extremely difficult even in their deterministic counterpart. Also, accounting for future uncertainty as done in VFA-2S/VFA-CF, i.e., by coupling exact methods (sections \ref{policyFunc}-\ref{look-aheadFunc}) with an approximation of the future routes (Section \ref{sec:Batch}), brings substantial value compared to the approaches that do not use future information (ME/MH), without losing tractability (which is instead lost in the OPrd\_PE). Moreover, the overall methodology does not require extensive training (as many of $Q$-learning approaches do), and relies on a tuning of very few hyper-parameters instead. The current paper provides a solid base for building approaches for other more complicated problem settings, involving, for example, vehicle capacity, time windows, or multiple vehicles. Additionally, the methodology can be potentially better tailored to the situations in which parcels arrive in batches.
For the multi-vehicle case: OP-rd and ME can be adapted by integrating multiple vehicles into the formulations, and MH can be modified to assign requests to idle vehicles at each decision epoch. In Section \ref{2-vehicle}, we show how the VFA approaches are used for two vehicles. An interesting research direction could be to design tailored approaches for the multi-vehicle case. Adapting VFA-CF and VFA-2S, combined with the approximation of future routes based on the batch approach, is more involved as the assignment of batches to vehicles should be included in the batch approach. However, the methodology proposed in this paper represents a solid starting point and given that it proved effective in single-vehicle and two-vehicle cases, it holds promise for the multi-vehicle case. 

\ACKNOWLEDGMENT{%
This work was partially funded by CY Initiative of Excellence (``Investissements d'Avenir" ANR-16-IDEX-0008''). This support is greatly appreciated. The authors are grateful for the reviews of the associate editor and two anonymous referees; their comments helped substantially in improving a former version of the paper.
}
\bibliographystyle{informs2014trsc}
\bibliography{IISE-Trans}

\clearpage
\begin{appendices}
We report overview of notation in Appendix \ref{sec:appendix_notation}, supplement to the literature review in Appendix \ref{sec:liter}, a first trivial upper bound for DOP-rd in Appendix \ref{sec:ub_appendix}, proof of Theorem 1 in Appendix \ref{sec:proof}, optimality conditions for the batch approach in Appendix \ref{optimality_batch}, description of input instances in Appendix \ref{sec:appendix_instances}, benefit of partial characterization of optimal policy in Appendix \ref{sec:partial_benefit}, hyper-parameter tuning in Appendix \ref{sec:training}, graphical representation of performance of the approaches on $\beta$ and $\delta$ in Appendix \ref{sec:4approaches_beta_delta}, performance of the approaches on time scale in Appendix \ref{sec:4approaches_timescale}, illustrative example in Appendix \ref{sec:route_example_plot}, comparison with a deterministic and static approach in Appendix \ref{sec:deter}, and detailed results for all values of $\beta$, $\delta$, and $c$ in Appendix \ref{detail}.

\section{Overview of notation used} \label{sec:appendix_notation}
See Table \ref{table:notation} and Table \ref{table:notation_sol}.
\begin{table}[H]
\caption{Overview of notation used in problem introduction}
\label{table:notation}
\centering
\scalebox{0.75}{\begin{tabular}{@{}ll@{}}
\toprule
\multicolumn{2}{c}{Problem Description}              \\ \midrule

\multicolumn{1}{l|}{$N$}                                                    & \multicolumn{1}{l}{Set of customers}                         \\
\multicolumn{1}{l|}{$V$}                                                    & \multicolumn{1}{l}{Set of vertices, $V=\{0\}\bigcup N$}      \\
\multicolumn{1}{l|}{$A$}                                                    & \multicolumn{1}{l}{Set of arcs connecting nodes $i$, $j$, where $i, j \in V$}      \\
\multicolumn{1}{l|}{$G$}                                                    & \multicolumn{1}{l}{A complete graph, $G = (V, A)$}      \\

\multicolumn{1}{l|}{$T_E$}                                                 & \multicolumn{1}{l}{Deadline, i.e., when the vehicle has to be back to the depot, and no further deliveries are allowed}                                                                  \\
\multicolumn{1}{l|}{$d_{ij}$}                                                 & \multicolumn{1}{l}{Traveling time from $i$ to $j$, where $i, j \in V$}                                    \\
\multicolumn{1}{l|}{$\mathcal{K}$}                                                    & \multicolumn{1}{l}{Set of routes, $\mathcal{K}=\{0, ..., |\mathcal{K}|-1\}$, including route 0 and the set of future routes denoted by K}                              \\
\multicolumn{1}{l|}{$e$}                                                    & \multicolumn{1}{l}{Decision epoch, $e\in \{0, ..., E\}$}                           \\
\multicolumn{1}{l|}{$t_e$}                                                 & \multicolumn{1}{l}{Time at decision epoch $e$}                                 \\
\multicolumn{1}{l|}{$N_e^{served}$}                     & \multicolumn{1}{l}{Set of customers already served before decision epoch $e$}                                 \\
\multicolumn{1}{l|}{$N_e^{unserved}$}                   & \multicolumn{1}{l}{Set of customers unserved at decision epoch $e$}                      
\\
\multicolumn{1}{l|}{$N_e^{known}$}                      & \multicolumn{1}{l}{Set of unserved customers whose parcels are available at the depot at decision epoch $e$} \\
\multicolumn{1}{l|}{$N_e^{unknown}$}                    & \multicolumn{1}{l}{Set of unserved customers whose parcels are still to be delivered to the depot}      \\
\multicolumn{1}{l|}{$N_e^{static}$}                     & \multicolumn{1}{l}{Set of customers whose information about release date is not updated until their parcels arrive at the depot} \\
\multicolumn{1}{l|}{$N_e^{dynamic}$}                    & \multicolumn{1}{l}{Set of customers whose information about release date is dynamically updated}                    \\
\multicolumn{1}{l|}{$\widetilde{r}^e_i$} & \multicolumn{1}{l}{Random variable associated with the release date of customer $i \in N_e^{unserved}$ at decision epoch $e$}                               \\
\multicolumn{1}{l|}{$r_i$}                                                 & \multicolumn{1}{l}{Actual arrival time of the parcel of customer $i \in N_e^{known}$}      \\
\midrule
\multicolumn{2}{c}{Components of MDP}                               \\
\midrule    
\multicolumn{1}{l|}{$\phi$}                                  & \multicolumn{1}{l}{Time interval between two additional decision epochs}                             \\
\multicolumn{1}{l|}{$S_e$}                                                 & \multicolumn{1}{l}{State at decision epoch $e$, $S_e = (t_e, N_e^{known}, \{\widetilde{r_i}^e\}_{i\in N_e^{unknown}}$)}                \\
\multicolumn{1}{l|}{$X(S_e)$}                                              & \multicolumn{1}{l}{Action space at decision epoch $e$}        \\
\multicolumn{1}{l|}{$\mathcal{X}_e$}                      & \multicolumn{1}{l}{Decision/action at decision epoch $e$, $\mathcal{X}_e \in X(S_e)$}              \\
\multicolumn{1}{l|}{$x^e_{ij}$}                      & \multicolumn{1}{l}{Binary arc variable at epoch $e$, taking value 1 if the route traverses arc $(i,j)$, 0 otherwise, where $i, j\in N_e^{known}\cup \{0\}$}              \\
\multicolumn{1}{l|}{$l(\mathcal{X}_e)$}                   & \multicolumn{1}{l}{Set of customer locations visited in the route associated with the action $\mathcal{X}_e$}                    \\

\multicolumn{1}{l|}{$t_{route}(\mathcal{X}_e)$}        & \multicolumn{1}{l}{Time required to perform the route associated with action $\mathcal{X}_e$}            \\
\multicolumn{1}{l|}{$t_p$}                                                 & \multicolumn{1}{l}{Earliest time when a new parcel arrives while the vehicle is at the depot} \\
\multicolumn{1}{l|}{$N_{e+1}^{new}$}                  & \multicolumn{1}{l}{Customers whose parcels arrived at the depot between decision epochs $e$ and $e+1$}                             \\
\multicolumn{1}{l|}{$V_e(S_e)$}            & \multicolumn{1}{l}{The value at decision epoch $e$ with state $S_e$}                   \\
\multicolumn{1}{l|}{$C(S_e,  \mathcal{X}_e)$}            & \multicolumn{1}{l}{Number of requests served by action $\mathcal{X}_e$}                   \\
\multicolumn{1}{l|}{$\gamma$}                                & \multicolumn{1}{l}{Discount factor} \\
\midrule
\multicolumn{2}{c}{Upper Bounds}                               \\
\midrule    
\multicolumn{1}{l|}{$|\mathcal{K}|$}                                  & \multicolumn{1}{l}{Number of routes performed by the vehicle}                             \\
\multicolumn{1}{l|}{$s_i^k$}                                  & \multicolumn{1}{l}{Binary variable for route $k \in\mathcal{K}$, equal to 1, if customer or depot $i \in V$ visited, 0 otherwise}                             \\
\multicolumn{1}{l|}{$\xi_{ij}^k$}                                  & \multicolumn{1}{l}{Binary arc variable for route $k \in\mathcal{K}$, equal to 1, if customer $i \in N$ visited, 0 otherwise}                             \\
\multicolumn{1}{l|}{$d_k$}                                  & \multicolumn{1}{l}{Starting time of route $k \in\mathcal{K}$}                             \\
\multicolumn{1}{l|}{$\alpha_i$}                                  & \multicolumn{1}{l}{Binary variable  equal to 1, if a direct route from the depot to customer $i \in N$ and back, is performed, 0 otherwise}                             \\
\bottomrule
\end{tabular}
}
\end{table}

\begin{table}[]
\caption{Overview of notation used in solution approaches}
\label{table:notation_sol}
\centering
\scalebox{0.65}{\begin{tabular}{@{}ll@{}}
\toprule
\multicolumn{2}{c}{Solution Approaches}              \\ \midrule
\multicolumn{1}{l|}{$\Omega$}                                                 & \multicolumn{1}{l}{Set of scenarios predicting their release dates  $\widetilde{r}^e_i$ at decision epoch e, associated with each customer $i \in N_e^{unserved}$}                                                                  \\
\multicolumn{1}{l|}{$\omega$}                                                    & \multicolumn{1}{l}{A realization of possible values of release dates for each customer $i \in N_e^{unserved}$, $\omega \in \Omega$}                         \\
\multicolumn{1}{l|}{$\rho$}                                                    & \multicolumn{1}{l}{Maximum number of requests to be served in each future route}      \\
\multicolumn{1}{l|}{$\mathcal{A}$}                                                    & \multicolumn{1}{l}{Euclidean area containing the locations of unserved customers}                           \\
\multicolumn{1}{l|}{$T_D$}                                                 & \multicolumn{1}{l}{Expected tour duration}                                 \\
\multicolumn{1}{l|}{$\theta$}                                                 & \multicolumn{1}{l}{Tuning parameters including $T_D$, the distribution of release dates, $\rho$, and $|\Omega|$.}                                 \\
\multicolumn{1}{l|}{$\tilde{S}_{e+1}$}                                  & \multicolumn{1}{l}{Approximation of state at $e+1$}                             \\
\multicolumn{1}{l|}{$\tilde{V}$}                                  & \multicolumn{1}{l}{The value of the value function estimated}                             \\
\multicolumn{1}{l|}{$\tau_{start}^0$}            & \multicolumn{1}{l}{A parameter indicating the starting time of the immediate route, so its value actually equals the time of the epoch $e$.}                   \\
\multicolumn{1}{l|}{$A^{known}$}                                & \multicolumn{1}{l}{Set of arcs $(i, j)$ where $i, j \in N_e^{known}\cup \{0\}$, linking customers with known requests and depot} \\
\multicolumn{1}{l|}{$p_\omega$}
             &
\multicolumn{1}{l}{Probability associated with each scenario $\omega \in \Omega$}
            \\
\multicolumn{1}{l|}{$route\ 0$}                               & \multicolumn{1}{l}{the first route leaving for delivery in $\mathcal{K}$, and for VFA approaches it starts immediately if non-empty in each decision epoch} \\
\midrule
\multicolumn{2}{c}{Batch Approach}                               \\
\midrule 
\multicolumn{1}{l|}{$K$}                      & \multicolumn{1}{l}{Set of indices for batches or future routes} \\
\multicolumn{1}{l|}{$K_0$}                    & \multicolumn{1}{l}{Set of indices for batches with spare capacities ($K_0\subseteq K$)}      \\
\multicolumn{1}{l|}{$k(i)$}                     & \multicolumn{1}{l}{Index of the batch in which the request $i\in N_e^{unknown}$ should be served} \\
\multicolumn{1}{l|}{$\tau_{start}^k$}                    & \multicolumn{1}{l}{Starting time of the route serving batch $k \in K$}           \\
\multicolumn{1}{l|}{$\tau_{end}^k$}                                                    & \multicolumn{1}{l}{Ending time of the route serving batch $k \in K$}         \\
\multicolumn{1}{l|}{$\rho_k$}                               & \multicolumn{1}{l}{Number of unknown requests assigned to the route serving batch $k\in K$} \\

\midrule
\multicolumn{2}{c}{Value Function Approximation with a Consensus Function}                               \\
\midrule    

\multicolumn{1}{l|}{$\tilde{X}^\omega_e$}                                                 & \multicolumn{1}{l}{Optimal action for the given value function approximation $\tilde{V}$ under scenario $\omega \in \Omega$ in decision epoch $e$}                \\
\multicolumn{1}{l|}{$\Phi$}                                              & \multicolumn{1}{l}{Consensus function for deciding the solution under the consideration of all the scenarios}        \\
\multicolumn{1}{l|}{$\tilde{X}^{PFA}_e$}                                                 & \multicolumn{1}{l}{Action decided by the consensus function $\Phi$}                \\
\multicolumn{1}{l|}{$x_{ij}^0$}                      & \multicolumn{1}{l}{Binary arc variable for route 0, equal to 1 if the route traverses arc $(i, j) \in A^{known}$, 0 otherwise}              \\
\multicolumn{1}{l|}{$y_i^0$}                   & \multicolumn{1}{l}{Binary variable associated with known requests $ i \in N_e^{known}$, equal to 1, if customer $i$ is visited in route $0$, and 0 otherwise}                    \\
\multicolumn{1}{l|}{$z_k$}                   & \multicolumn{1}{l}{Binary variable equal to 1   if future route $k \in K$ is  executed, 0 otherwise}      \\
\multicolumn{1}{l|}{$\tau_{end}^0$}        & \multicolumn{1}{l}{Continuous variable denoting the ending time of route 0}            \\
\multicolumn{1}{l|}{$\pi_i^k$}                  & \multicolumn{1}{l}{Binary variable equal to 1 if known request $i \in N_e^{known}$ is served in batch $k \in K_0$, 0 otherwise} 
\\
\multicolumn{1}{l|}{$\lambda$}                  & \multicolumn{1}{l}{the percentage of scenarios in which a location must appear for being included in route 0} \\
\midrule
\multicolumn{2}{c}{Value Function Approximation with a Two-Stage Stochastic ILP Model}                               \\
\midrule   
\multicolumn{1}{l|}{$K^\omega$}                                  & \multicolumn{1}{l}{Set of indices for all the batches created in scenario $\omega$, $\omega \in \Omega$}     
\\
\multicolumn{1}{l|}{$K_0^\omega$}                                  & \multicolumn{1}{l}{Subset of batches with spare capacities in scenario $\omega \in \Omega$}                             \\
\multicolumn{1}{l|}{$\tau^{k\omega}_{start}$}                                  & \multicolumn{1}{l}{Starting time of batch $k\in K^\omega$ in scenario $\omega\in \Omega$}                             \\
\multicolumn{1}{l|}{$\rho_k^\omega$}                                & \multicolumn{1}{l}{Number of unknown requests assigned to the route serving batch $k\in K^\omega$ in scenario $\omega\in \Omega$} \\

\multicolumn{1}{l|}{$\pi_i^{k\omega}$}                                  & \multicolumn{1}{l}{Binary decision variable equal to 1 if known request $i \in N_e^{known}$ is served in batch  $k$ in scenario $\omega \in \Omega$, $k \in K_0^\omega$, and 0 otherwise}                             \\
\multicolumn{1}{l|}{$z_k^\omega$}                                  & \multicolumn{1}{l}{Binary decision variable equal to 1 if batch $k\in K^\omega$ in scenario $\omega \in \Omega$ is executed, and 0 otherwise}                             \\
\multicolumn{1}{l|}{$\widetilde{r}^e$}                                  & \multicolumn{1}{l}{The distribution of uncertain release dates for unserved requests}                             \\
\multicolumn{1}{l|}{$\tilde{V}(y^0, \tau_{end}^0, \widetilde{r}^e)$}                                  & \multicolumn{1}{l}{The value of the recourse function associated with the second stage}                             \\
\bottomrule
\end{tabular}
}
\end{table}

\section{Supplement to the literature review}
\label{sec:liter}
\subsection{Paper difference}
\label{liter:diff}
 The following outlines some key differences between \cite{anuar2021multi} and our paper: 1) Problem domain and uncertainty factors: \cite{anuar2021multi} work on humanitarian applications, whereas our paper focuses on same-day deliveries. 
Furthermore, in the approach proposed by \cite{anuar2021multi},
the computation of a complete route at each decision epoch aims at incorporating the uncertain road capacity.
In contrast, we aim at maximizing the expected number of requests while incorporating the uncertainty of release dates, a prevalent aspect in SDD business practices;   2) Model complexities: The models proposed by \cite{anuar2021multi} primarily focus on determining the next destination for each vehicle at each decision epoch. In contrast, our models calculate multiple future routes, and the decision made pertains to building a complete route. This difference in the modeling approach leads to variations in the complexity and structure of the ILP models employed; 3) RL approaches: \cite{anuar2021multi} use a rollout approach, whereas we employ VFAs with a one-step look-ahead. Notably, the idea of approximating future routes is not employed by \cite{anuar2021multi}.

\subsection{Summary of literature review}
\label{sec:appendix_literature}
See Table \ref{table:literature}. We emphasize that we define release dates as \emph{dynamic} if the information regarding their distribution is updated throughout the day. The papers listed mostly study dynamic problems where ``dynamic'' means customer orders are not fully known in advance and arrive dynamically at the CDC, but few of them consider updated release dates.

\begin{sidewaystable} 
\caption{Literature Comparison}
\label{table:literature}
\centering
\scalebox{0.66}{\begin{tabular}{|c|c|cc|c|c|c|c|}
\hline
                    &                                                                      & \multicolumn{2}{c|}{\textbf{Release dates}}                                &                           &                                                                                                                                                   &                           &                                                                                                                                                                     \\ \hline
\textbf{Literature} & \textbf{Vehicle}                                                     & \multicolumn{1}{c|}{\textbf{Stochastic}}       & \textbf{Dynamic}          & \textbf{Deadline}         & \textbf{Objective}                                                                                                                                & \textbf{MDP}              & \textbf{Approach}                                                                                                                                                   \\ \hline
\cite{cattaruzza2016multi}           & Single                                                               & \multicolumn{1}{c|}{\xmark}     & \xmark     & \checkmark & Min(total travel distance)                                                                                                                        & \xmark     & Hybrid genetic algorithms                                                                                                                                           \\ \hline
\multirow{2}{*}{\cite{archetti2015complexity}}    & Single                                                               & \multicolumn{1}{c|}{\xmark}     & \xmark     & \checkmark & \begin{tabular}[c]{@{}c@{}}Min(total travel distance\\ within deadline) and \end{tabular}                                                         & \xmark     & -                                                                                                                                                                   \\
    & Unlimited                                                            & \multicolumn{1}{c|}{\xmark}     & \xmark     & \xmark     & Min(the completion time)                                                                                                                          & \xmark     & -                                                                                                                                                                   \\ \hline
\multirow{2}{*}{ \cite{reyes2018complexity}}    & Single                                                               & \multicolumn{1}{c|}{\xmark}     & \xmark     & \checkmark & 
\begin{tabular}[c]{@{}c@{}}Min(the completion time) and\\ Min(total travel distance within deadline) \end{tabular}& \xmark     & - \\ 
    & Unlimited                                                            & \multicolumn{1}{c|}{\xmark}     & \xmark     & \checkmark & Min(the travel distance)                                                                                                                          & \xmark     & -                                                                                                                                                                   \\ \hline
\cite{shelbourne2017vehicle}           & Multi                                                                & \multicolumn{1}{c|}{\xmark}     & \xmark     & \checkmark & \begin{tabular}[c]{@{}c@{}}Min(a convex combination of\\ the total distance traveled and\\ the total weighted tardiness of delivery)\end{tabular} & \xmark     & \begin{tabular}[c]{@{}c@{}}A path-relinking algorithm\\ based on\\ a hybrid evolutionary framework.\end{tabular}                                                    \\ \hline
\cite{archetti2018iterated}       & Single                                                               & \multicolumn{1}{c|}{\xmark}     & \xmark     & \xmark     & Min(the completion time)                                                                                                                          & \xmark     & \begin{tabular}[c]{@{}c@{}}A heuristic approach based on\\ an iterated local search\end{tabular}                                                                    \\ \hline
\cite{archetti2020dynamic}       & Single                                                               & \multicolumn{1}{c|}{\checkmark} & \checkmark & \xmark     & Min(the expected completion time)                                                                                                                          & \checkmark & \begin{tabular}[c]{@{}c@{}}Two models proposed\\ (stochastic and deterministic), \\ the solution is obtained \\ through an iterated local search\end{tabular}         \\ \hline
\cite{voccia2019same}              & \begin{tabular}[c]{@{}c@{}}Multi\\ (plus a third party)\end{tabular} & \multicolumn{1}{c|}{\checkmark} & \xmark & \checkmark & Max(the expected \#requests served)                                                                                                                            & \checkmark & \begin{tabular}[c]{@{}c@{}}Sample-scenario planning and\\ a variable neighborhood search \\ implemented for each scenario\end{tabular}                              \\ \hline
\cite{van2019delivery}                 & \begin{tabular}[c]{@{}c@{}}Multi\\ (plus a third party)\end{tabular} & \multicolumn{1}{c|}{\checkmark} & \xmark & \checkmark & Min(the expected total dispatching costs)                                                                                                                  & \checkmark & \begin{tabular}[c]{@{}c@{}}Approximate dynamic programming\\ using a linear value function approximation\\ to estimate the downstream costs\end{tabular}           \\ \hline
\cite{klapp2018one}               & Single                                                               & \multicolumn{1}{c|}{\checkmark} & \xmark & \checkmark & \begin{tabular}[c]{@{}c@{}}Min(the expected total travel costs\\ plus penalties\\  for unattended and realized orders)\end{tabular}                     & \checkmark & \begin{tabular}[c]{@{}c@{}}Three heuristic policies(a priori, direct rollout, \\ roll out a prize-collecting TSP guided\\ by an initial a priori solution)\end{tabular} \\ \hline

\cite{schrotenboer2021fighting}       & Multi                                                               & \multicolumn{1}{c|}{\checkmark}     &   \xmark   & \checkmark     & Max(the expected customer satisfaction)     & \checkmark     & \begin{tabular}[c]{@{}c@{}}A cost function approximation approach \\ that modifies a set-packing formulation\end{tabular}                                  \\\hline

This paper     & Single and two vehicles                                                               & \multicolumn{1}{c|}{\checkmark}     & \checkmark     & \checkmark     & Max(the expected \#requests served)     & \checkmark     & \begin{tabular}[c]{@{}c@{}}VFA-CF and VFA-2S approaches proposed\\ relying on the approximation of future routes\end{tabular}                                  \\\hline

\end{tabular}}
\end{sidewaystable}

\section{Upper Bound: the Orienteering Problem}\label{sec:ub_appendix}

A simple upper bound for the DOP-rd can be obtained by solving a special instance of the Orienteering Problem (OP). 
OP is defined on a  complete graph $\hat G=(\hat V, \hat A)$ in which we are given arc weights $\hat w_{ij} >0$, $(i,j) \in \hat A$, and node prizes $\hat p_i>0$, $i \in \hat V\setminus\{0\}$, where $0$ is the depot. The goal is to find a route whose total arc weight does not exceed a budget  $\hat B >0$, and the total collected prize from visited nodes is maximized.

In each decision epoch $e$, we can calculate a valid upper bound on the maximum number of parcels that can be delivered within the interval $[t_e,T_E]$ in the DOP-rd. This bound is given as a solution of the OP over the set of parcels in $N^{unserved}_e$, with all parcel prizes being set to one, and the maximum budget set to  $T_E - t_e$. 

Such obtained upper bound can be used in a competitive analysis. Moreover, it gives rise to the partial characterization of the optimal policy described in Section 4.2. The methodology used for solving the OP is based on a branch-and-cut (see, e.g., \cite{FisST98}), which is one of the most effective exact approaches for the OP.

\section{Proof of Theorem 1}\label{sec:proof}

\proof{Proof.} A solution of Formulation (2) consists of a sequence of non-empty feasible and compatible routes. A route is feasible if it does not start before any of the release dates of the customers to be visited and finishes before $T_E$. Routes are compatible if they visit each customer at most once. Considering a customer $i$, the shortest feasible route serving $i$ is the one that leaves the depot at time $r_i$, goes to $i$ and comes back to the depot. Constraints (3b) guarantee the feasibility of the direct routes selected: indeed, for each customer $i$, constraints (3b) ensure that the route to $i$, as well as the following direct routes can be performed before the deadline $T_E$. Thus, Formulation (3) determines the maximum number of direct routes performed within $T_E$.
Let us assume that there exists a solution $\tilde{s}$ with a higher number of routes than $UB_{routes}$. Given the argument above, at least one of the routes in $\tilde{s}$ is not a direct route. Thus, we can arbitrarily remove customers from this route up to when a single customer remains, making it a direct route. This procedure can be repeated on all routes containing more than one customer in $\tilde{s}$. In this way, we obtain a solution with the same number of routes as in $\tilde{s}$, and where all routes are direct routes. However, due to the argument above, the number of routes cannot be greater than $UB_{routes}$, and this proves point 1 of the theorem.\\
As for point 2, feasibility is ensured by constraints (3b): a route starts not earlier than the delivery date of the customer to be served, and its duration plus the duration of the following routes does not exceed $T_E$. $\qed$

\section{Optimality Conditions for the Batch Approach}
\label{optimality_batch}
In the following, we show that under some simplifying conditions, the batch approach given by Algorithm 1 provides a polynomial way to calculate an optimal solution.
Let us assume that we are at the decision epoch $e$ and that all the release dates are known with certainty. We denote as $r$ the vector of release dates. In the following proposition, we still use the notation $N_e^{unknown}$ referring to parcels whose release date is greater than $t_e$. 

\begin{prop}\label{prop:maxnumber}
If the release dates are deterministic, the duration of each route is $T_D$, independently of the location of the requests served, and the vehicle can deliver up to $\rho$ parcels per route, Algorithm 1 finds (in polynomial time):
\begin{enumerate}
    \item The maximum number of  requests from $N_e^{unknown}$ that can be served within $[t_e,T_E]$, given as $| N_{\mathcal O}|$, where
        $N_{\mathcal O} =  \{ i \in N_e^{unknown} : k(i) \neq 0 \}$. 
    \item The maximum number of all requests from $N_e^{unserved}$ that can be served within the interval $[t_e,T_E]$, given as:
    \[
    OPT_{\rho,T_D} = \begin{cases}
    |N^{known}_e| + |N_\mathcal{O}|, & \text{ if $ |N^{known}_e| < \sum_{k \in K_0} (\rho - \rho_k)$}  \\
    |K|* \rho, & \text{ otherwise}  
    \end{cases}
    \]
    \end{enumerate}
\end{prop}
\proof{Proof.}
We first observe that in case $\rho_k=\rho$ for all $k \in K$, then this is trivially the optimal solution. Thus, in the following, we discard this case.
\begin{enumerate}
    \item By construction of the solution $ N_{\mathcal O}$, there exists $\tilde{k}$, $1 \le \tilde{k} \le |K|$, such that $\rho_{\tilde{k}}< \rho$, $\rho_k=\rho$ for $k < \tilde{k}$ and $\rho_k =0$ for $k>\tilde{k}$. The batch $\tilde{k}$ is called \textit{critical batch} and corresponds to the only possible batch whose spare capacity is positive but strictly smaller than $\rho$. We call this property \textit{the critical batch property}. Suppose that there exists a solution $N_\mathcal{O'} \subseteq N_e^{unknown}$ such that $|N_\mathcal{O'}| > |N_\mathcal{O}|$   parcels from $N_e^{unknown}$ can be delivered within the interval $[t_e,T_E]$. Without loss of generality, we can shift the starting time of all batches to the latest possible and then reassign parcels to the latest possible batch so that the solution $N_\mathcal{O'}$ also satisfies the critical batch property. We argue that the number of parcels scheduled in batch $\tilde{k}$  of the reordered solution $N_\mathcal{O'}$ cannot be bigger than the one in $N_\mathcal{O}$. Indeed, by contradiction: let us consider a request $i$ served in batch $\tilde{k}$ in $N_\mathcal{O'}$ and not in $N_\mathcal{O}$. In case $i$ is not served in any batch $k<\tilde{k}$ in $N_\mathcal{O}$, then $i$ would have been inserted in batch $\tilde{k}$ in solution $N_\mathcal{O}$ as well. In case, instead, request $i$ is served in one batch $k<\tilde{k}$ in $N_\mathcal{O}$, then it means there exists at least one request $i'$ served in a batch $k<\tilde{k}$ in $N_\mathcal{O'}$ but not in $N_\mathcal{O}$. However, due to the way batches in $N_\mathcal{O}$ are constructed, this means that $r_i^e \geq r_{i'}^e$. Given that $i$ is inserted in batch $\tilde{k}$ in $N_\mathcal{O'}$, this means that $r_{i'}^e \leq \tau_{start}^{\tilde{k}}$, thus $i'$ and $i$ could be swapped in the corresponding batches in $N_\mathcal{O'}$. By repeating iteratively this procedure on all requests in batch $\tilde{k}$ in $N_\mathcal{O'}$, we obtain two identical sets of requests in the critical batch $\tilde{k}$.


    \item We notice that $\sum_{k \in K_0} (\rho - \rho_k)$ represents the sum of spare capacities over all routes that can be scheduled within $[t_e,T_E]$. Since the parcels from $N_e^{known}$ are already available at time $t_e$, if their number is smaller than the total spare capacity, all of them can be distributed among the routes determined by $K_0$, hence guaranteeing that $|N^{known}_e| + |N_\mathcal{O}|$ parcels are delivered in total. If, on the contrary,  $|N^{known}_e|$ is larger than the total spare capacity, only a subset of them will be delivered so that for each batch, its maximal capacity is used. We emphasize that due to the assumption that all routes have the same duration, namely $T_D$, and the same capacity $\rho$, customer locations are irrelevant in this case, and hence, the way how assignment of parcels from $N^{known}_e$ to batches from $K_0$ is done, does not affect the optimal solution. $\qed$
\end{enumerate}

\section{Input Instances} \label{sec:appendix_instances}
We summarize the procedure for generating the benchmark instances, similar to the one used in \cite{archetti2020dynamic}.

Solomon's instances contain six sets of instances: $C1$, $C2$, $R1$, $R2$, $RC1$, and $RC2$. The instances in the same set share the same vertex coordinates and vary in time windows only. Because we do not discuss time windows, we keep one instance from each set. Besides, we discard  $R2$ and $RC2$ because they share the same coordinates as $R1$ and $RC1$. Each of the instances is annotated as $I\_\beta$, $I$ is the instance name, including $C101$, $C201$, $R101$, and $RC101$, where the vertex coordinates are taken, and $\beta$ describes how much the release dates spread out, compared to the TSP travel time of the instance.

The release dates are generated using a Gaussian distribution with the expectation and the variance obtained by simulating the arrival of the vehicles transporting the packages to the depot. For customers in $N_e^{dynamic}$ (i.e., the set of customers whose information about the release date is dynamically updated along with the parcels' travel to the depot), both the expectation and the variance are updated by simulating the traveling of the vehicle for delivering the packages to the depot. The distance of each vehicle is updated by reducing the distance traveled in the previous time unit. If the updated distance is nonpositive at $e$, it means the vehicle has delivered the parcel to the depot, and the customer $i$ is added in the set $N_e^{known}$ with release date $r_i = t_e$. Otherwise, the vehicle speed is updated as a truncated random walk process with Gaussian steps, and the new expectations and variances of the distributions are computed as well. Specifically, as the vehicle travels to the depot, the uncertainties decrease, and the estimation of the release date gets more precise. We use $\delta\in \{0, 0.5, 1\}$ to indicate the rate of customers with a dynamically updated release date. When $\delta$ is 0, it means all distributions of customers' release dates are static. Besides, 
%
for each Solomon's instance, a parameter  $\beta \in \{0.5, 1, 1.5\}$ is generated, defining the dispersion of release dates. The greater the value of $\beta$, the broader the interval the release dates are sampled from. 

For each of Solomon's instances and value of parameters $\beta$,  $\delta$, and $c$, 
five instances have been created by varying the seed for random release date generation. In our paper, we report the results considering all five seeds, where the instances of two seeds are used for hyper-parameter tuning and the remaining ones are used for comparing with benchmarks. As for the pairwise Euclidean distances between customer coordinates, they are rounded up to the lowest integer values.

\section{Benefit of partial characterization of optimal policy}
\label{sec:partial_benefit} 
We illustrate the benefit of partial characterization (PC) of optimal policy (see Section 4.2) by comparing the performance of VFA-CF and VFA-2S with and without PC. Results are shown in Table \ref{partial_comparison2} where instances are classified according to the value on $\delta$ first, then $\beta$, and finally $c$. For each of the two approaches, the table reports the average computing time (in seconds) with and without PC, the average and maximum percentage improvement in policy value of the approach with PC with respect to the corresponding one without PC (over the set of improved instances), and the number of times the approach with PC improved on the policy  without PC.

In Table \ref{partial_comparison2}, 
we observe that when using PC, the running time decreases in VFA-CF. The reduction of time in VFA-CF is because the use of PC avoids some runs of the deterministic ILPs. On the other hand, in VFA-2S, the use of PC slightly increases the running time. Furthermore, we observe that policies are improved more significantly when release dates are spread at a moderate level ($\beta=1$) and when the time horizon is large ($c=0.8, 1, 1.2$).
Overall, we have an improvement of {3.44\%} for {VFA-CF} and {3.05\%} for VFA{-2S} over the set of instances improved, with maximum improvements being {11.76\% and 6.98\%}, respectively. Considering both the improvement in policy quality for the subset of improved instances and the reduction in running time, we see that the inclusion of PC of the optimal policy demonstrates an overall improvement in the performance of our proposed methods.


\begin{table}[h]
\captionsetup{labelfont={color=blue},font={color=blue}}
\caption{Comparison of VFA-CF and VFA-2S 
with and without partial characterization of optimal policy
}
\label{partial_comparison2}
\centering
\scalebox{0.68}{\color{black}
\begin{tabular}{lc||cc|ccc||cc|ccc}  \hline             &                & \multicolumn{5}{c||}{\textbf{VFA-CF}}                                                        & \multicolumn{5}{c}{\textbf{VFA-2S}}                                                       \\\hline
              &                & \multicolumn{2}{c|}{t[s]}        & \multicolumn{3}{c||}{Improvement}                 & \multicolumn{2}{c|}{t[s]}        & \multicolumn{3}{c}{Improvement}                \\\hline
              & \#instances    & noPC           & withPC         & AVG(\%)       & MAX(\%)        & \#impr. & noPC           & withPC         & AVG(\%)       & MAX(\%)       & \#impr. \\\hline\hline
$\delta=0$       & 144            & 82.80          & 43.70          & 3.10          & 5.00           & 9              & 34.46          & 32.08          & 2.81          & 5.00          & 11             \\
$\delta=0.5$     & 144            & 49.61          & 45.34          & 3.93          & 9.68           & 7              & 34.81          & 36.20          & 3.74          & 6.98          & 7              \\
$\delta=1$       & 144            & 69.36          & 60.45          & 4.67          & 11.76          & 9              & 39.03          & 42.85          & 4.07          & 6.82          & 8              \\\hline
$\beta=0.5$      & 144            & 101.51         & 64.90          & 0.00          & 0.00           & 0              & 45.87          & 45.50          & 0.00          & 0.00          & 0              \\
$\beta=1$        & 144            & 57.44          & 47.54          & 4.20          & 11.76          & 20             & 31.94          & 33.02          & 3.69          & 6.98          & 21             \\
$\beta=1.5$      & 144            & 42.82          & 37.04          & 2.70          & 4.44           & 5              & 30.49          & 32.61          & 2.44          & 2.86          & 5              \\\hline
$c=0.6$         & 108            & 47.91          & 44.88          & 2.50          & 2.50           & 1              & 30.80          & 33.04          & 2.82          & 2.86          & 2              \\
$c=0.8$         & 108            & 53.98          & 47.34          & 5.63          & 11.76          & 5              & 30.53          & 33.18          & 3.41          & 5.56          & 5              \\
$c=1$           & 108            & 107.56         & 57.73          & 5.11          & 6.82           & 7              & 44.28          & 40.50          & 4.74          & 6.82          & 7              \\
$c=1.2$         & 108            & 59.57          & 49.36          & 2.59          & 5.13           & 12             & 38.78          & 41.45          & 2.81          & 6.98          & 12             \\\hline
\textbf{AVG/MAX} & \textbf{129.6} & \textbf{67.26} & \textbf{49.83} & \textbf{3.44} & \textbf{11.76} & \textbf{7.5}   & \textbf{36.10} & \textbf{37.04} & \textbf{3.05} & \textbf{6.98} & \textbf{7.8}  
 \\\hline
\end{tabular}}
\end{table}

\section{Hyper-parameter tuning}\label{sec:training}

To justify the choice of our hyper-parameters, two out of the five seeds of instance generation have been used for hyper-parameter tuning. Specifically, we tune the number of scenarios $|\Omega|$, batch size $\rho$, discount factor $\gamma$, and $\lambda$ - the percentage of scenarios in which a location must appear to be included in route 0. We run the policies with PC with different parameter values and present the results in the following. We adopt a set of default parameter values: $|\Omega| = 30$, $\gamma = 0.7$, $\lambda = 0.5$. When we tune one of the parameters, we use the default values of the other parameters.

To compare the performance, we present the percentage gap from the best policy among the set of parameter values for the same approach. To better clarify, we provide an example of scenario numbers. For each of the two approaches $p \in \{ \text{VFA-CF, VFA-2S} \}$, and a given input instance, let $N_{p,|\Omega|}$ denote the number of total requests served, assuming approach $p$ is applied with a given value of $|\Omega|$.
Then 
\begin{equation}
\label{eq:gap_Nk}
    \Gamma_{p, |\Omega|} = 1 - \frac{N_{p,|\Omega| }}{\max_{|\Omega| \in \{10, 30, 50\}}  N_{p,|\Omega|}} 
\end{equation}
provides the gap between the policy obtained for a given value of $|\Omega|$ and the best policy among the three different values of $|\Omega|$. 

\subsection{Number of scenarios}
\label{sec:scenario}
We first choose three possible numbers of scenarios, namely 10, 30, and 50, and run the policies with each number. We display the average percentage gaps and the running time for each scenario number summarized by $\delta$, $\beta$, and time scale $c$. In Table \ref{table:scenario}\footnote{It is worth noting that we identified an outlier in the instance when $\delta=1$, $\beta=0.5$, and $c=1.2$, which has a significantly longer running time equal to 619.88s for VFA-2S with 30 scenarios. To provide a more generalized analysis, we removed this specific running time and obtained the following results: $\delta=1$-27.85, $\beta=0.5$-25.65, and $c=1.2$-27.16, with an average of 27.80, as shown in the table. The average results before the removal were $\delta=1$-34.02, $\beta=0.5$-31.84, and $c=1.2$-35.40, with an overall average of 33.04.}, each row corresponds to an average calculated over a subset of instances with the fixed values of $\delta$, $\beta$, and $c$ parameters on all instances, respectively. We observe that for both VFA-CF and VFA-2S, the gaps are similar across the three settings, and there is a slight advantage shown for a higher number of scenarios; the running time increases with the number of scenarios, which is reasonable as the more scenarios, the more runs in VFA-CF and the bigger model size in VFA-2S. To balance policy quality and running time, we set $|\Omega|$ equal to 30.

\begin{table}[htp]
\caption{Average percentage gap $\Gamma_{p, |\Omega|}$ from the best policy found for  $|\Omega| \in \{10,30,50\}$ and $p \in \{\text{VFA-CF}, \text{VFA-2S}\}$.}
\label{table:scenario}
\centering
\scalebox{0.7}{\color{black}
\begin{tabular}{l||cccccc||cccccc}
\toprule
                          & \multicolumn{6}{c}{\textbf{VFA-CF}}& \multicolumn{6}{c}{\textbf{VFA-2S}} \\\hline
\textbf{$|\Omega|$}       & \multicolumn{2}{c}{\textbf{10}} & \multicolumn{2}{c}{\textbf{30}} & \multicolumn{2}{c}{\textbf{50}} & \multicolumn{2}{c}{\textbf{10}} & \multicolumn{2}{c}{30}          & \multicolumn{2}{c}{50}          \\\hline
                          & gap{[}\%{]}     & t{[}s{]}       & gap{[}\%{]}     & t{[}s{]}       & gap{[}\%{]}     & t{[}s{]}       & gap{[}\%{]}     & t{[}s{]}       & gap{[}\%{]}     & t{[}s{]}       & gap{[}\%{]}     & t{[}s{]}       \\\hline
$\delta$=0   & 0.37           & 36.21          & 0.03           & 45.69          & 0.06           & 53.07          & 0.51           & 26.40          & 0.45          & 27.70          & 0.17          & 29.80          \\
$\delta$=0.5 & 0.24           & 35.59          & 0.40           & 45.91          & 0.33           & 55.75          & 0.54           & 26.54          & 0.87          & 27.85          & 0.42          & 29.78          \\
$\delta$=1   & 0.43           & 45.93          & 0.13           & 52.79          & 0.10           & 69.22          & 0.68           & 27.41          & 0.40          & 27.85          & 0.32          & 30.06          \\\hline
$\beta$=0.5  & 0.69           & 47.21          & 0.15           & 59.29          & 0.03           & 81.70          & 0.62           & 24.22          & 0.52          & 25.65          & 0.38          & 27.41          \\
$\beta$=1    & 0.14           & 36.35          & 0.20           & 47.25          & 0.31           & 53.85          & 0.52           & 27.48          & 0.71          & 28.82          & 0.31          & 31.19          \\
$\beta$=1.5  & 0.21           & 34.16          & 0.21           & 37.85          & 0.16           & 42.50          & 0.59           & 28.65          & 0.49          & 28.90          & 0.23          & 31.04          \\\hline
$c$=0.6                     & 0.45           & 36.94          & 0.32           & 44.82          & 0.32           & 49.63          & 0.58           & 28.73          & 0.57          & 29.57          & 0.22          & 32.39          \\
$c$=0.8                     & 0.38           & 52.39          & 0.07           & 62.36          & 0.11           & 83.32          & 0.55           & 26.87          & 0.80          & 28.41          & 0.51          & 31.17          \\
$c$=1                       & 0.53           & 34.60          & 0.26           & 41.51          & 0.09           & 46.97          & 0.88           & 25.89          & 0.46          & 26.04          & 0.35          & 27.53          \\
$c$=1.2                     & 0.04           & 33.03          & 0.10           & 43.82          & 0.14           & 57.48          & 0.30           & 25.64          & 0.47          & 27.16          & 0.14          & 28.42          \\\hline
\textbf{AVG}              & \textbf{0.35}  & \textbf{39.24} & \textbf{0.19}  & \textbf{48.13} & \textbf{0.16}  & \textbf{59.35} & \textbf{0.58}  & \textbf{26.78} & \textbf{0.57} & \textbf{27.80} & \textbf{0.30} & \textbf{29.88}\\\hline
\end{tabular}}
\end{table}

\subsection{Batch size}\label{sec:batchSize}

In this section, we present the results of the experiments we made to set a proper value of $\rho$, i.e., the maximum number of packages transported in future routes used in the batch approach. We test four values: $\rho \in \{ 5, 10, 15, 20\}$. 

Table \ref{table:nk2} shows the percentage gaps
for the VFA-CF and VFA-2S approaches, respectively. 
\begin{table}[tb!]
\begin{minipage}{0.6\linewidth}{
\caption{Average percentage gap $\Gamma_{p, \rho}$ from the best policy found for  $\rho \in \{5,10,15,20\}$ and $p \in \{\text{VFA-CF}, \text{VFA-2S}\}$.}
\label{table:nk2}
\centering
\scalebox{0.75}{\color{black}
\begin{tabular}{l||cccc||cccc}
\hline
\textbf{}    & \multicolumn{4}{c}{\textbf{VFA-CF}}                           & \multicolumn{4}{c}{\textbf{VFA-2S}}                           \\\hline
\textbf{$\rho$} & 5    & 10   & 15   & 20   & 5    & 10   & 15   & 20   \\\hline\hline 
$\delta=0$      & 7.11          & 5.25          & 2.83          & 4.49          & 6.94          & 5.85          & 3.17          & 4.50          \\
$\delta=0.5$    & 6.72          & 4.85          & 4.23          & 5.58          & 5.90          & 4.70          & 4.31          & 6.12          \\
$\delta=1$      & 5.18          & 5.74          & 3.61          & 4.00          & 5.05          & 5.13          & 3.49          & 4.19          \\\hline 
$\beta=0.5$     & 10.00         & 8.48          & 5.20          & 8.09          & 9.87          & 8.41          & 5.70          & 8.91          \\
$\beta=1$       & 6.37          & 4.79          & 3.62          & 4.09          & 5.90          & 4.94          & 3.47          & 4.11          \\
$\beta=1.5$     & 2.64          & 2.57          & 1.84          & 1.89          & 2.13          & 2.33          & 1.81          & 1.80          \\\hline 
$c=0.6$        & 9.12          & 7.36          & 6.32          & 8.37          & 8.63          & 7.32          & 6.18          & 9.64          \\
$c=0.8$        & 8.52          & 6.55          & 3.57          & 5.04          & 8.05          & 6.21          & 4.36          & 5.05          \\
$c=1$          & 3.85          & 4.70          & 2.78          & 3.38          & 3.89          & 3.82          & 2.50          & 2.83          \\
$c=1.2$        & 3.86          & 2.50          & 1.54          & 1.96          & 3.29          & 3.54          & 1.60          & 2.23          \\\hline \hline 
\textbf{AVG} & \textbf{6.34} & \textbf{5.28} & \textbf{3.55} & \textbf{4.69} & \textbf{5.97} & \textbf{5.23} & \textbf{3.66} & \textbf{4.94}
\\\hline 
\end{tabular}
}}\end{minipage}
\begin{minipage}{0.4\linewidth}{
\caption{Average percentage gap $\Gamma_{p, \gamma}$ from the best policy found for  $\gamma \in \{0.7,0.8,0.9,1.0\}$ and $p \in \{\text{VFA-CF}, \text{VFA-2S}\}$.}
\label{table:discount}
\centering
\scalebox{0.7}{\color{black}
\begin{tabular}{l||cccc||cccc}
\toprule
                          & \multicolumn{4}{c}{\textbf{VFA-CF}}              & \multicolumn{4}{c}{\textbf{VFA-2S}}              \\\hline
$\gamma$                  & 0.7            & 0.8            & 0.9       & 1.0     & 0.7            & 0.8            & 0.9     & 1.0       \\\hline\hline
$\delta$=0   & 1.27          & 2.00          & 1.99          & 3.24          & 2.05          & 1.98          & 2.33       & 4.04          \\
$\delta$=0.5 & 1.77          & 1.53          & 2.27          & 4.64          & 1.81          & 1.29          & 2.38          & 3.72          \\
$\delta$=1   & 2.19          & 1.97          & 2.20          & 3.16          & 2.20          & 2.37          & 2.19          & 3.13          \\\hline
$\beta$=0.5  & 2.77          & 2.70          & 2.81          & 2.67          & 3.37          & 2.72          & 3.18          & 2.93          \\
$\beta$=1    & 1.58          & 1.56          & 1.79          & 3.83          & 1.73          & 1.87          & 1.79          & 3.92          \\
$\beta$=1.5  & 0.88          & 1.24          & 1.86          & 4.55          & 0.96          & 1.04          & 1.92          & 4.04          \\\hline
$c$=0.6                     & 2.10          & 2.26          & 2.42          & 3.39          & 2.25          & 1.84          & 2.92          & 3.20          \\
$c$=0.8                     & 1.78          & 2.05          & 2.73          & 4.98          & 2.72          & 2.60          & 2.79          & 4.79          \\
$c$=1                       & 1.56          & 1.69          & 1.89          & 3.48          & 1.41          & 1.67          & 1.80          & 3.96          \\
$c$=1.2                     & 1.53          & 1.33          & 1.57          & 2.88          & 1.69          & 1.41          & 1.68          & 2.56          \\\hline\hline
\textbf{AVG}              & \textbf{1.74} & \textbf{1.83} & \textbf{2.15} & \textbf{3.68} & \textbf{2.02} & \textbf{1.88} & \textbf{2.30} & \textbf{3.63} \\\hline
\end{tabular}}}
\end{minipage}
\end{table}

For both VFA-CF and VFA-2S, we see that, on average, the performance improves when increasing $\rho$ from 5 to 15 but deteriorates when it increases to 20. The best results are obtained when $\rho$ equals $15$. Specifically, when $\rho=15$, the gaps are the smallest in most cases, while $\rho=5$ and $20$ show some exceptionally good results in a few cases, for example, when $\beta=1.5$ in VFA-2S. 
Based on the overall results,  we opt for a value of $\rho=15$ as it generally provides the best results. 

\subsection{Discount factor}
\label{sec:2approaches_discount}
We now analyze the impact of the discount factor on the performance. In Table \ref{table:discount}, we see that the results are similar among the three values of $\gamma \in \{0.7, 0.8, 0.9, 1.0\}$ for both policies, which means that the choice on $\gamma$  does not significantly influence the performance. To be consistent, we choose the value of $\gamma=0.8$ for both VFA-CF and VFA-2S.

\subsection{Performance of the VFA-CF on the basis of \(\lambda\)}
\label{sec:appendix_ps}
We test the performance of VFA-CF based on the percentage of scenarios in which a location must appear to be included in route 0, represented by $\lambda$ in Algorithm 1 in the paper. Similarly to other subsections of Section \ref{sec:training}, we report the average gap (\%) compared to the best policy obtained for VFA-CF over the three tested values of $\lambda$.  In Table \ref{table:ps}, we see the gaps are all within 2\%, which means that the choice on the $\lambda$  does not significantly influence the performance. We opt for $\lambda=0.5$ as this value provides the smallest gap.

\begin{table}[tb!]
\caption{Average percentage gap $\Gamma_{p, \lambda}$ from the best policy found for  $\lambda \in \{0.4,0.5,0.6\}$ and $p\ \text{is VFA-CF}$.}
\label{table:ps}
\centering
\scalebox{0.7}{\color{black}
\begin{tabular}{l|ccc|ccc|cccc|c}
\toprule
$\lambda$ & $\delta=0$ & $\delta=0.5$ & $\delta=1$ & $\beta=0.5$ & $\beta=1$ & $\beta=1.5$ & $c=0.6$ & $c=0.8$ & $c=1$  & $c=1.2$ & \textbf{AVG}  \\\hline\hline
0.4    & 0.32                    & 0.05                      & 0.58                    & 0.66                     & 0.18                   & 0.11                     & 0.21  & 0.33  & 0.66 & 0.06  & \textbf{0.32} \\
0.5    & 0.03                    & 0.23                      & 0.58                    & 0.24                     & 0.13                   & 0.47                     & 0.41  & 0.09  & 0.43 & 0.19  & \textbf{0.28} \\
0.6    & 0.49                    & 0.74                      & 0.48                    & 0.42                     & 0.42                   & 0.87                     & 1.25  & 0.38  & 0.41 & 0.23  & \textbf{0.57}
\\ \hline
\end{tabular}}
\end{table}

\section{Graphical Representation of Performance of the Approaches on $\beta$ and $\delta$}
\label{sec:4approaches_beta_delta}
\begin{figure*}
    \begin{subfigure}[b]{0.5\columnwidth}
        \includegraphics[width=\textwidth]{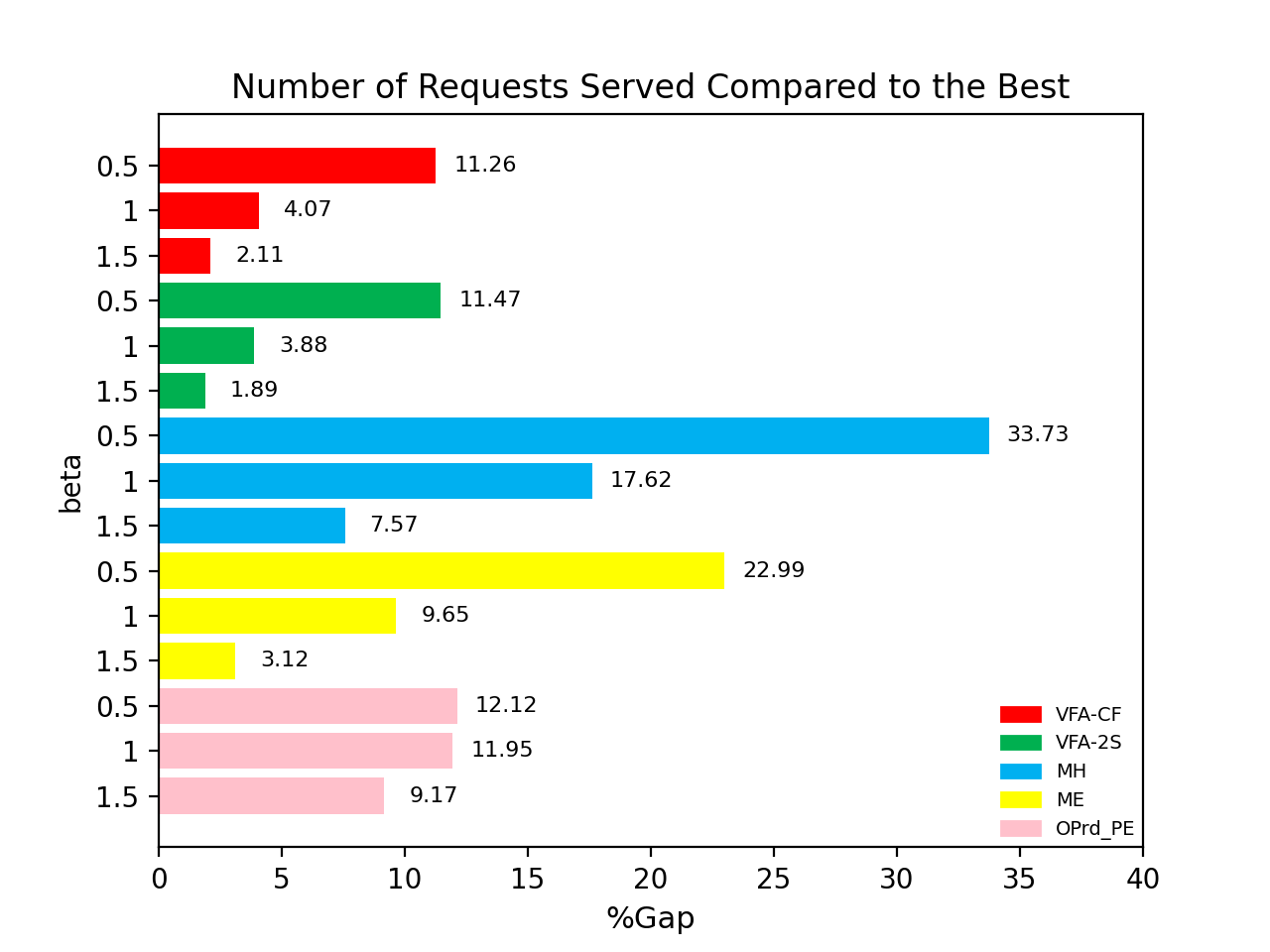}
    \end{subfigure}
    \hfill
    \begin{subfigure}[b]{0.5\columnwidth}
        \includegraphics[width=\textwidth]{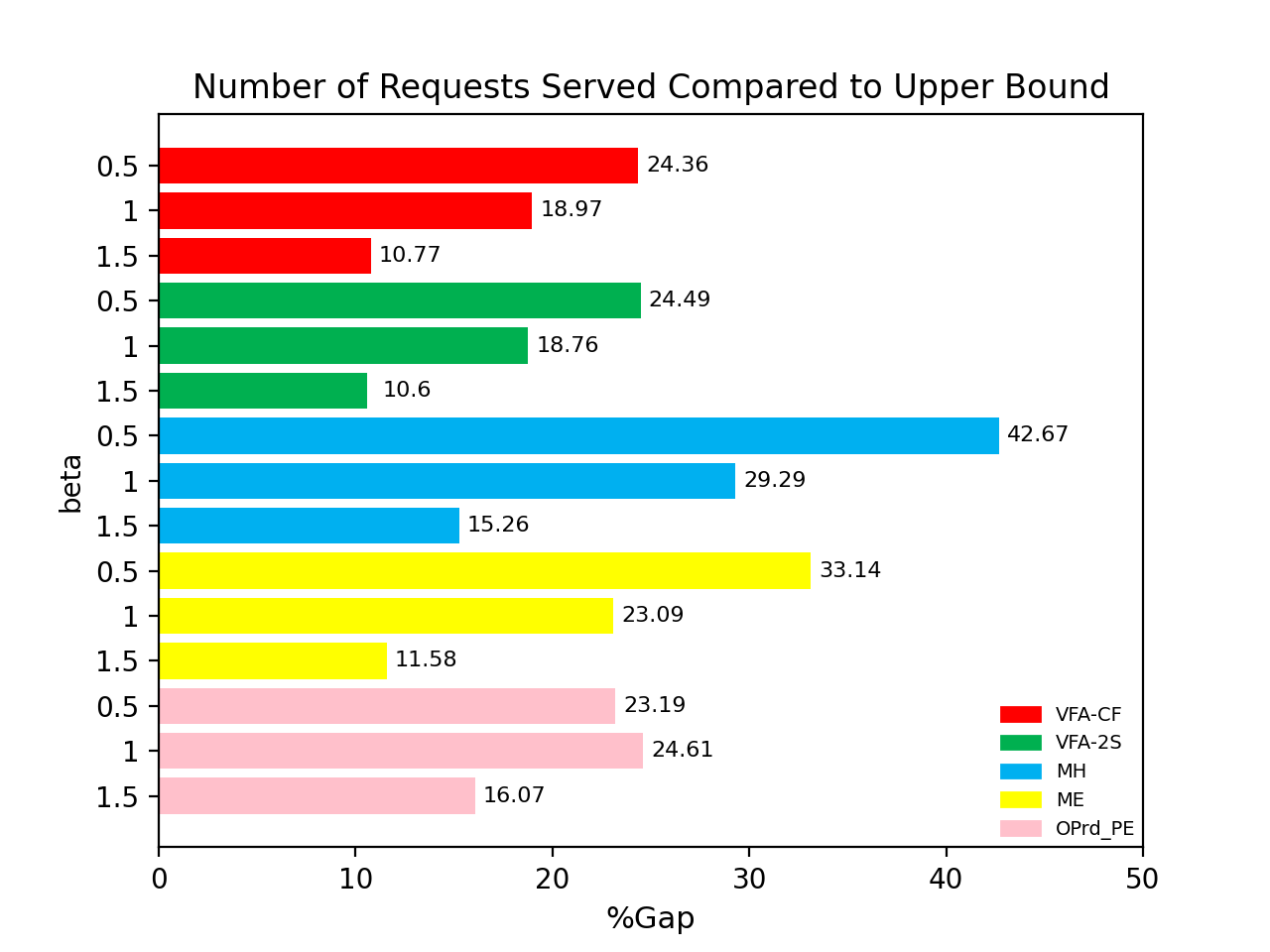}
    \end{subfigure}

    \begin{subfigure}[b]{0.5\columnwidth}
        \includegraphics[width=\textwidth]{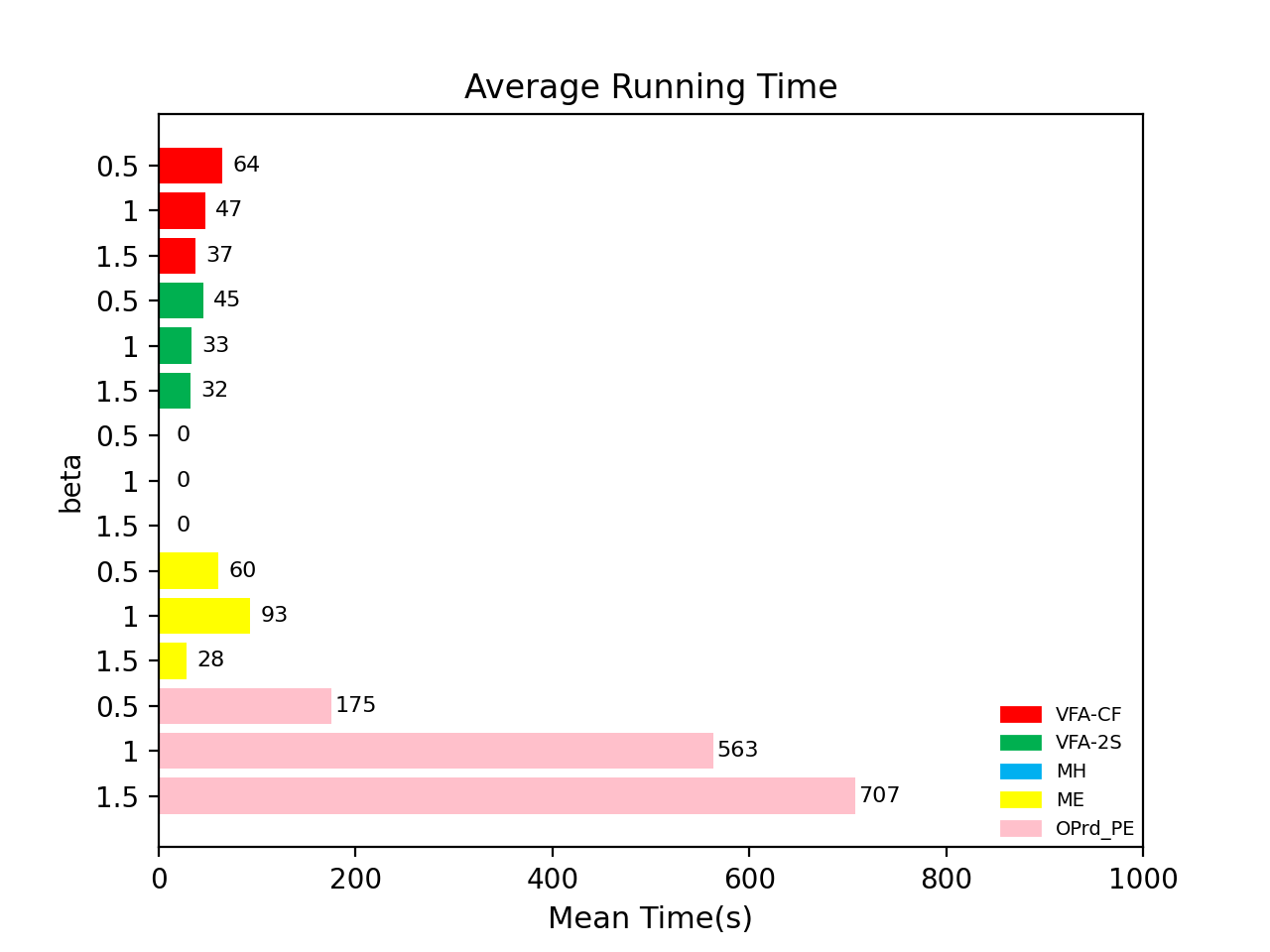}
    \end{subfigure}
    \hfill
    \begin{subfigure}[b]{0.5\columnwidth}
        \includegraphics[width=\textwidth]{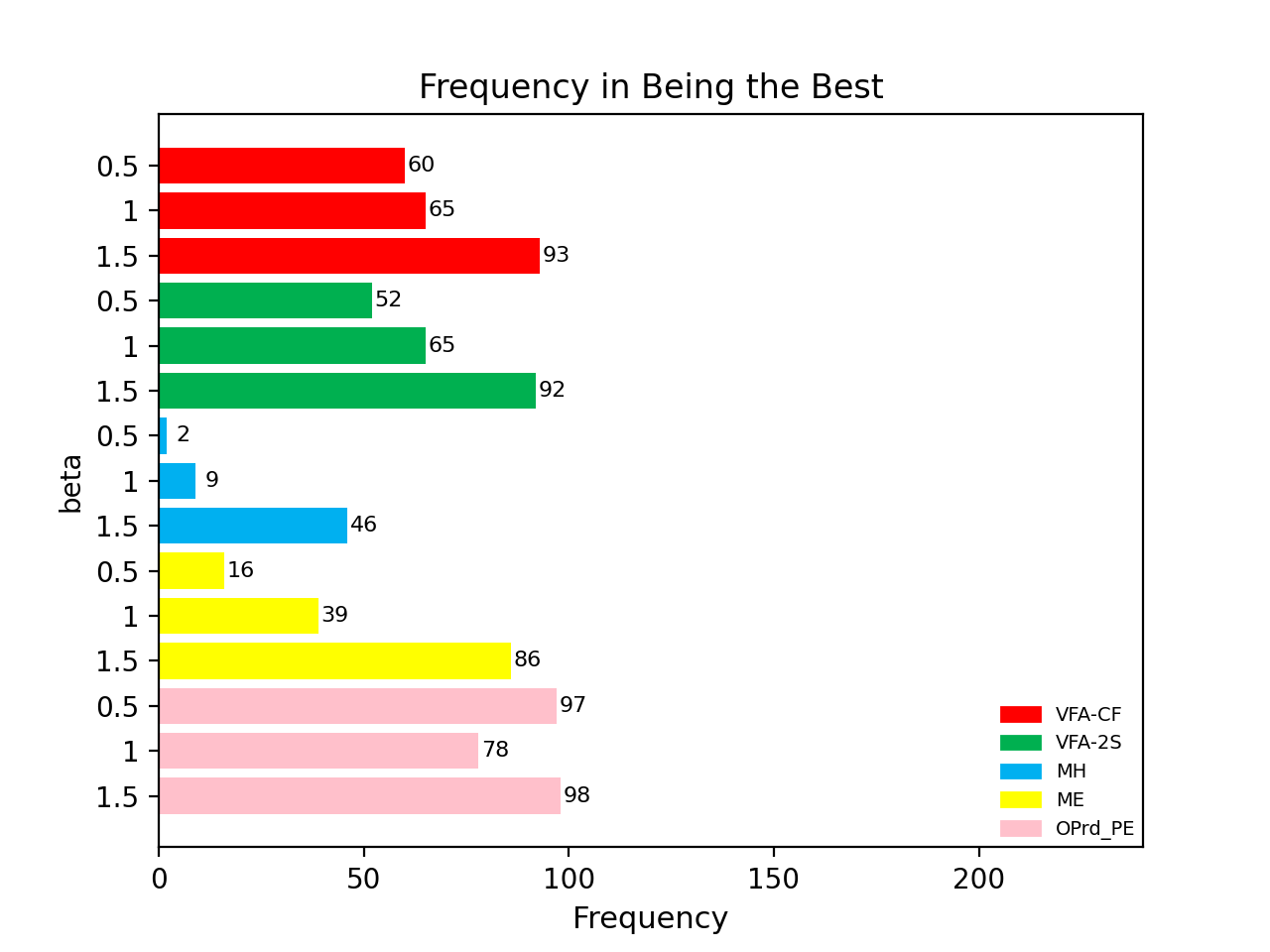}
    \end{subfigure}
    \caption{Performance of the five approaches based on the values of $\beta$}
    \label{fig:4approach_beta2}
\end{figure*}

\begin{figure*}
    \begin{subfigure}[b]{0.49\columnwidth}
        \includegraphics[width=\textwidth]{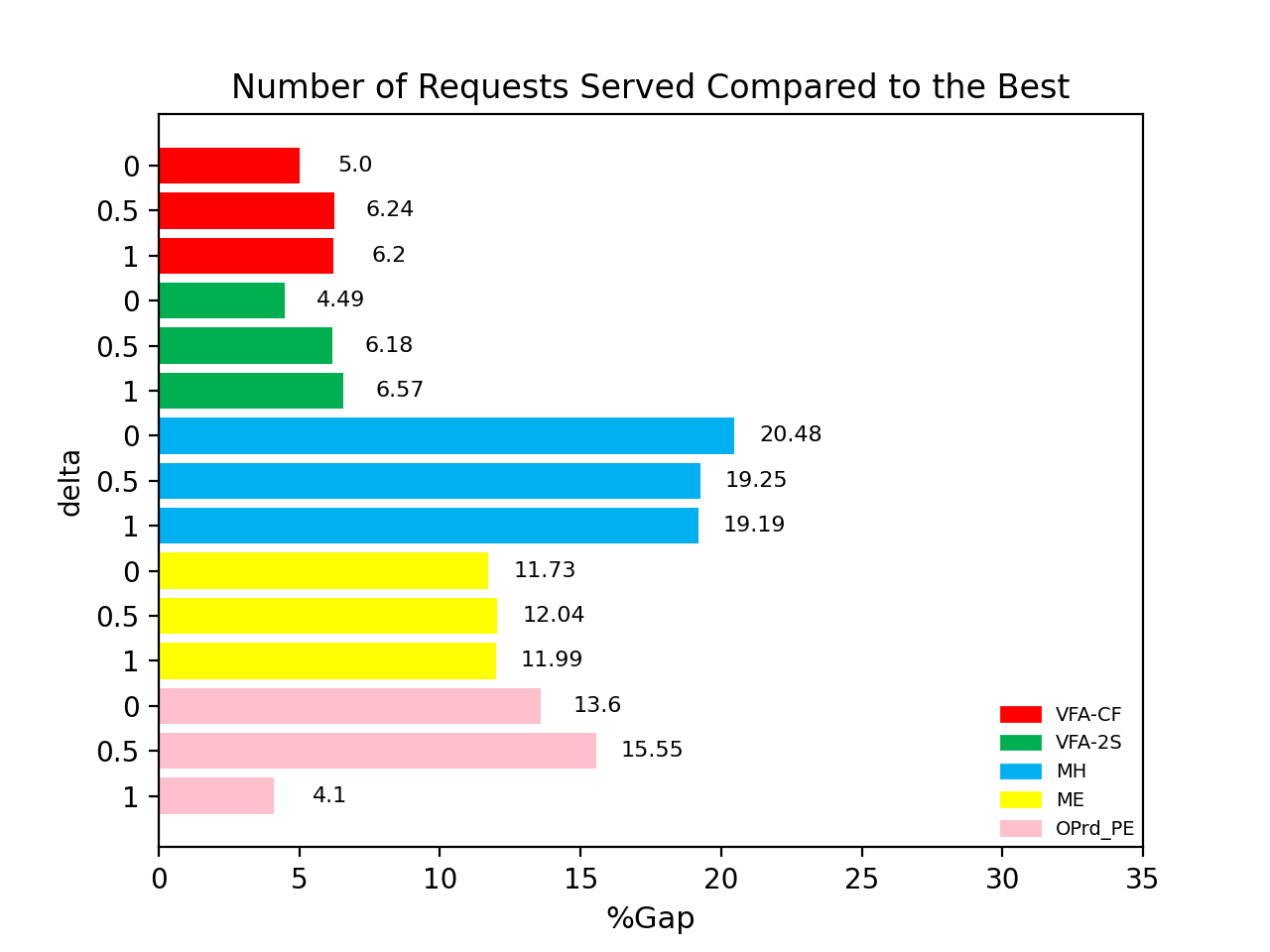}
    \end{subfigure}
    \hfill
    \begin{subfigure}[b]{0.49\columnwidth}
        \includegraphics[width=\textwidth]{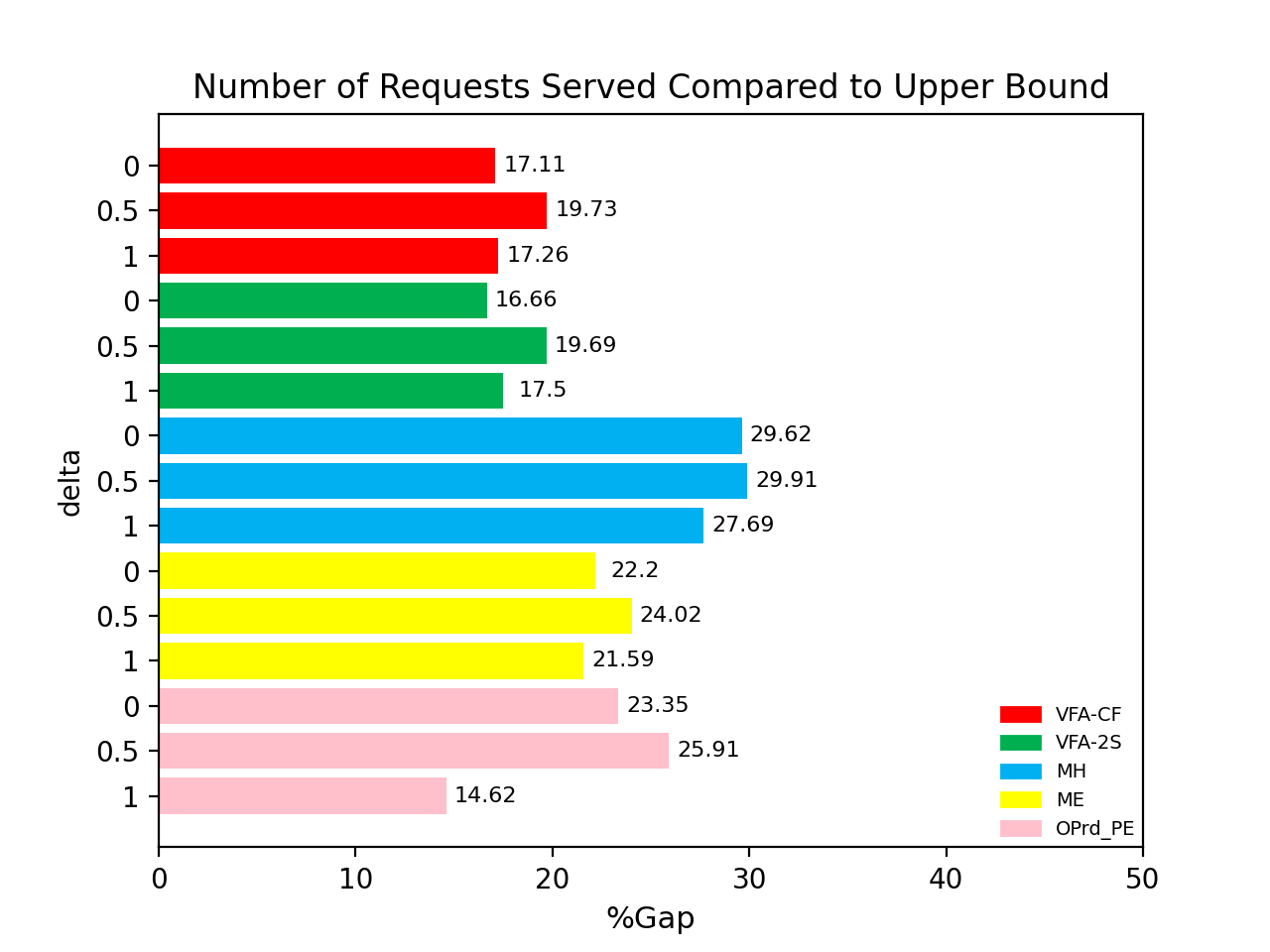}
    \end{subfigure}

    \begin{subfigure}[b]{0.49\columnwidth}
        \includegraphics[ width=\textwidth]{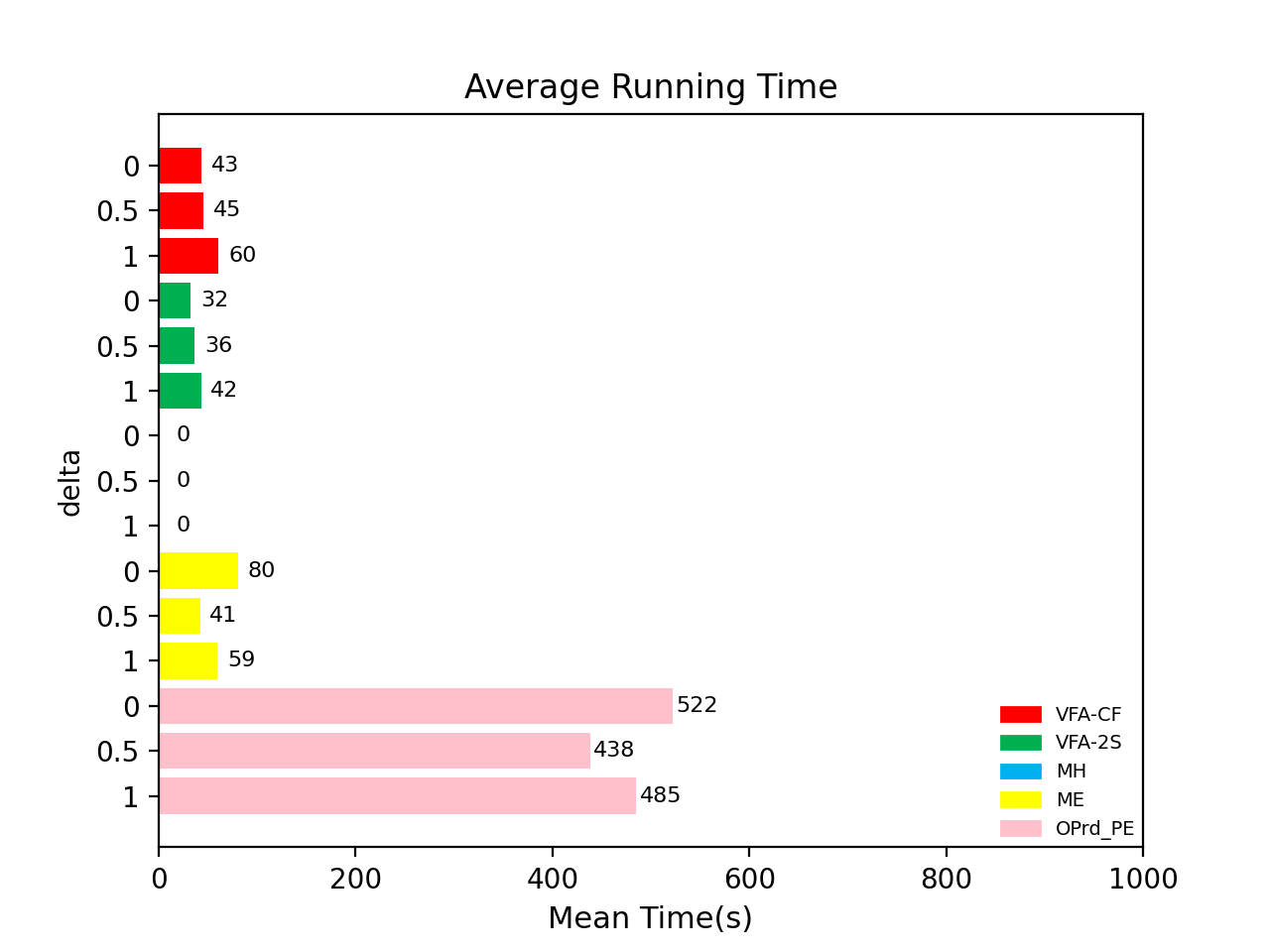}
    \end{subfigure}
    \hfill
    \begin{subfigure}[b]{0.49\columnwidth}
        \includegraphics[width=\textwidth]{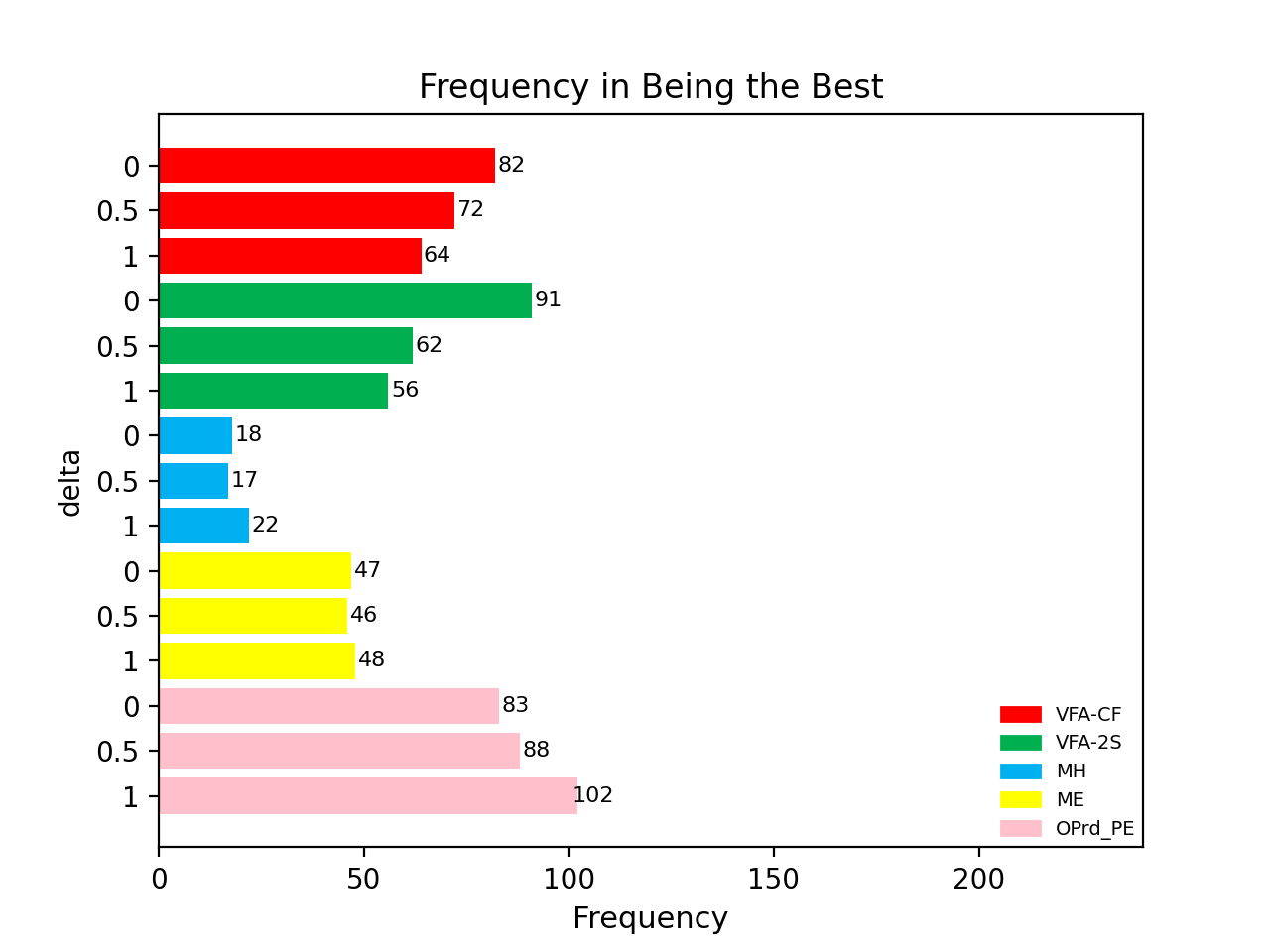}
    \end{subfigure}
    \caption{Performance of the five approaches based on the values of $\delta$}
    \label{fig:4approach_delta2}
\end{figure*}
In Figure \ref{fig:4approach_beta2}, we observe that when $\beta$ goes from 0.5 to 1.5, for both gap[\%] and gap$_{ub}$[\%], all approaches improve progressively, especially for ME and MH. 
Regarding the frequency in getting the best policy, we find that VFA-CF and VFA-2S achieve a significantly better performance when the spread of release dates is large (i.e., $\beta = 1.5$). As for OPrd\_PE, we see it is the only one that can compete with VFA-CF and VFA-2S for $\beta$ = 0.5, but its results are not stable, and the running time is almost ten times bigger. 
Besides, in Figure \ref{fig:4approach_delta2}, we see that when $\delta$ is 0.5, values of gap$_{ub}$[\%] are the largest for most approaches, which means it is challenging to solve instances containing customers with mixed dynamism degrees.

\section{Performance of the Approaches on Time Scale}
\label{sec:4approaches_timescale}
In this section, we compare the effect of the different values of the deadline $T_E$ in Table \ref{table:4approaches_TE} and Figure \ref{fig:4approaches_ts2}. Remember that $T_E$ is determined as the latest release date in the instance ($T_{standard}$) multiplied by parameter $c$.

\begin{figure*}
    \begin{subfigure}[b]{0.52\columnwidth}
        \includegraphics[width=\textwidth]{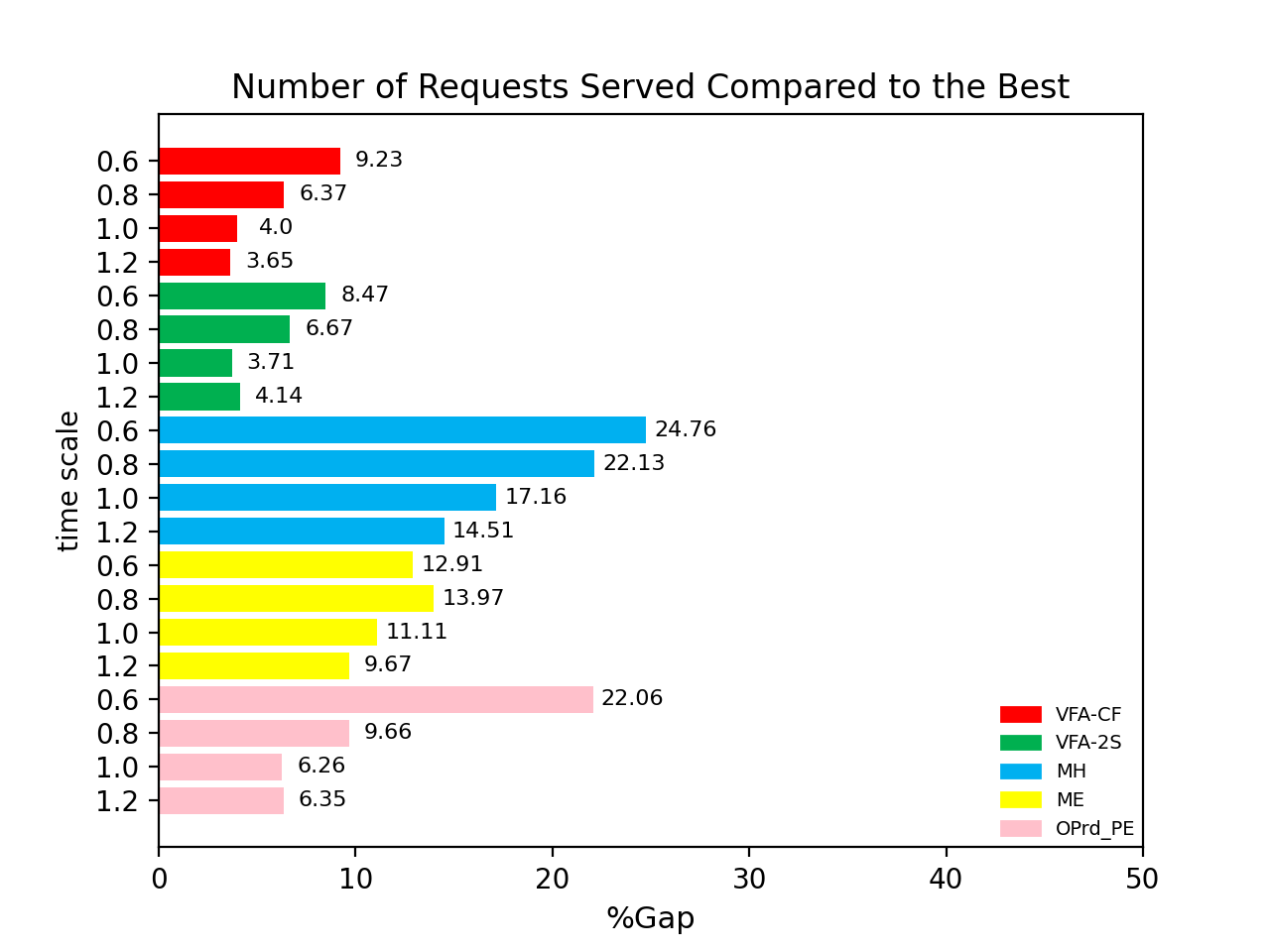}
    \end{subfigure}
    \hfill
    \begin{subfigure}[b]{0.52\columnwidth}
        \includegraphics[width=\textwidth]{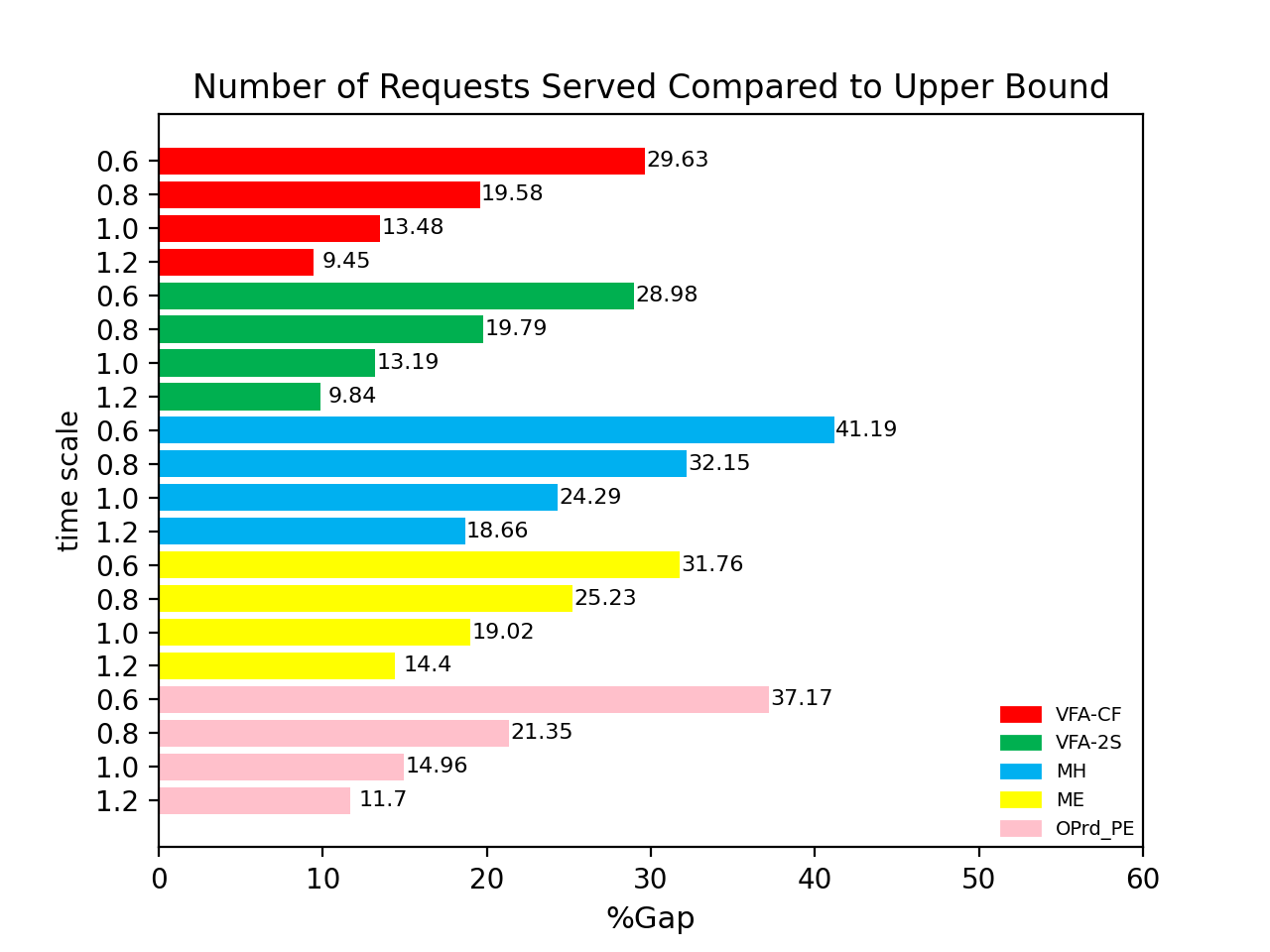}
    \end{subfigure}

    \begin{subfigure}[b]{0.52\columnwidth}
        \includegraphics[width=\textwidth]{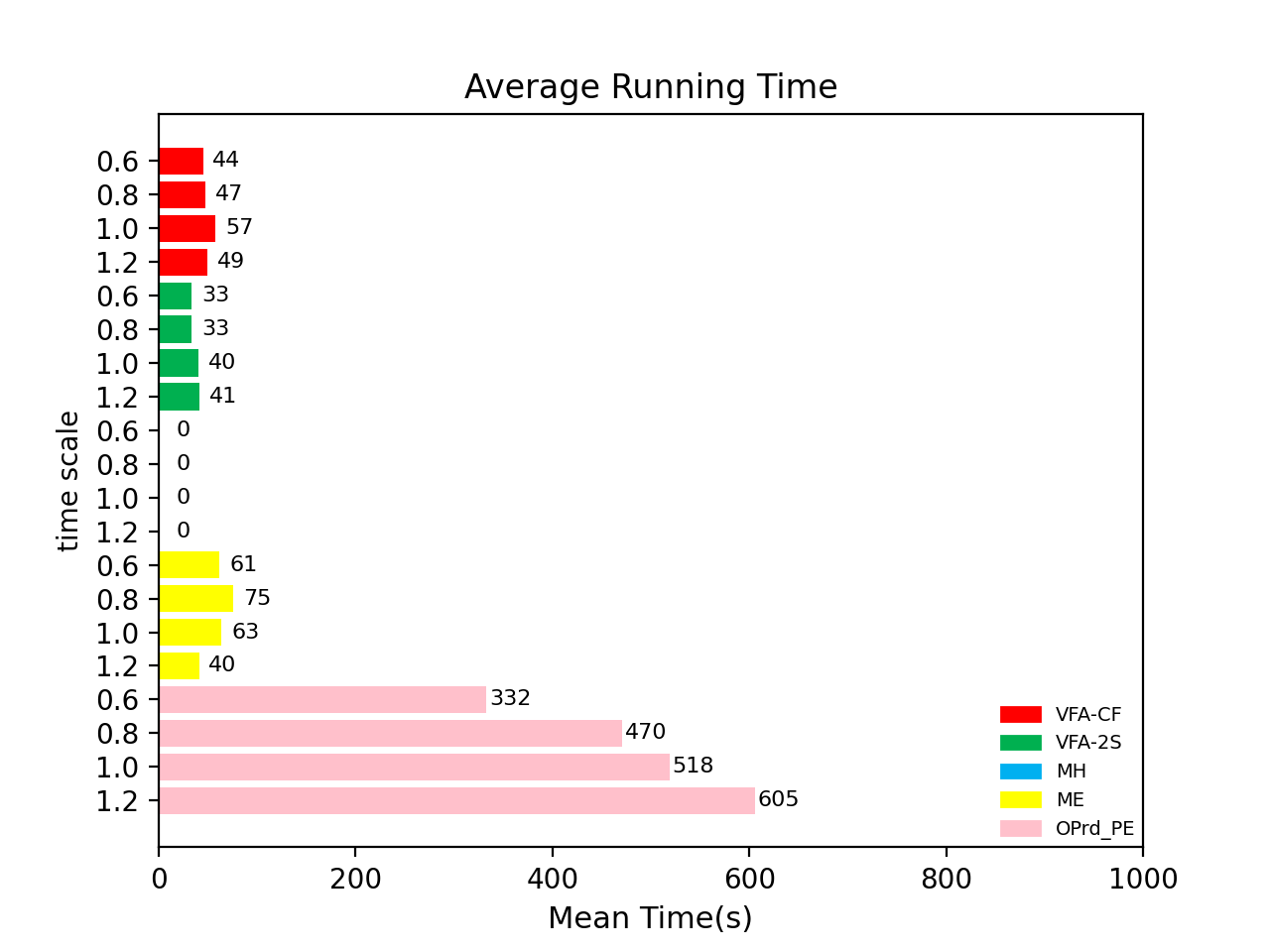}
    \end{subfigure}
    \hfill
    \begin{subfigure}[b]{0.52\columnwidth}
        \includegraphics[width=\textwidth]{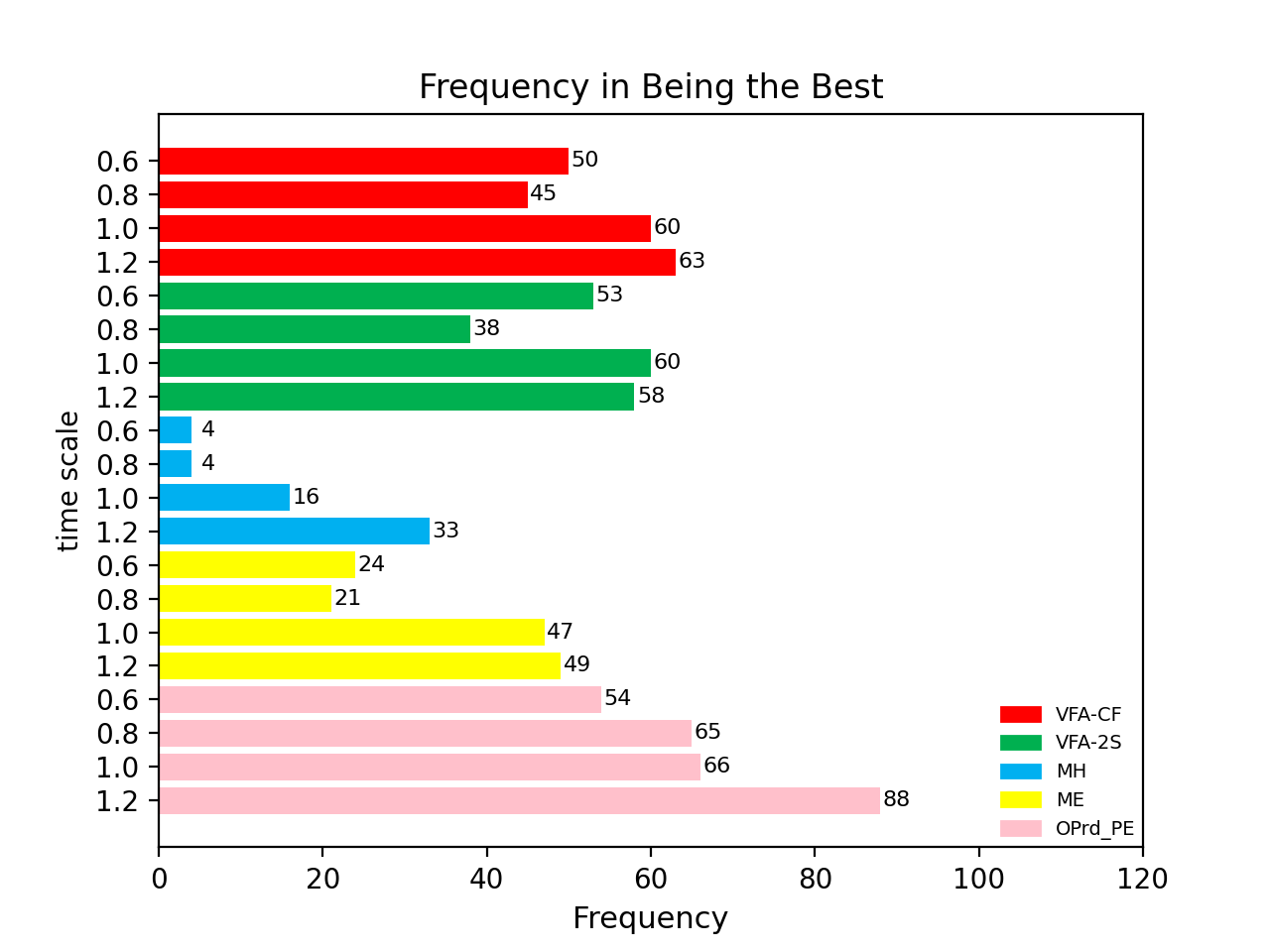}
    \end{subfigure}
    \caption{Performance of the five approaches based on the value of $c$}
    \label{fig:4approaches_ts2}
\end{figure*}

\begin{table}[hpt]
\caption{The average performance of the five approaches  on the basis of $c$}
\label{table:4approaches_TE}
\begin{subtable}{\linewidth}\centering
\scalebox{0.6}{\color{black}
\begin{tabular}{l|l|cccc|cccc}
\toprule
\textbf{}    & \textbf{}                                & \multicolumn{4}{c}{\textbf{VFA-CF}}                              & \multicolumn{4}{c}{\textbf{VFA-2S}}                              \\\hline
${c}$   & \multicolumn{1}{c|}{\textbf{\#instances}} & gap{[}\%{]}   & gap\_ub{[}\%{]} & t{[}s{]}       & freq          & gap{[}\%{]}   & gap\_ub{[}\%{]} & t{[}s{]}       & freq          \\\hline
0.6          & 108                                      & 9.23          & 29.63           & 44.88          & 50            & 8.47          & 28.98           & 33.04          & 53            \\
0.8          & 108                                      & 6.37          & 19.58           & 47.34          & 45            & 6.67          & 19.79           & 33.18          & 38            \\
1            & 108                                      & 4.00          & 13.48           & 57.73          & 60            & 3.71          & 13.19           & 40.50          & 60            \\
1.2          & 108                                      & 3.65          & 9.45            & 49.36          & 63            & 4.14          & 9.84            & 41.45          & 58            \\\hline
\textbf{AVG} & \textbf{108}                             & \textbf{5.81} & \textbf{18.03}  & \textbf{49.83} & \textbf{54.5} & \textbf{5.75} & \textbf{17.95}  & \textbf{37.04} & \textbf{52.3}\\ \hline
\end{tabular}
}\end{subtable}
\newline
\vspace*{0.01\linewidth}
\newline
\begin{subtable}{\linewidth}\centering
\scalebox{0.6}{\color{black}
\begin{tabular}{l|l|cccc|cccc|cccc}
\hline
\textbf{}    & \textbf{}                                & \multicolumn{4}{c}{\textbf{MH}}                                  & \multicolumn{4}{c}{\textbf{ME}}                                   & \multicolumn{4}{c}{\textbf{OPrd\_PE}}                             \\\hline
$c$   & \multicolumn{1}{c|}{\textbf{\#instances}} & gap{[}\%{]}    & gap\_ub{[}\%{]} & t{[}s{]}      & freq          & gap{[}\%{]}    & gap\_ub{[}\%{]} & t{[}s{]}       & freq          & gap{[}\%{]}    & gap\_ub{[}\%{]} & t{[}s{]}        & freq          \\\hline
0.6          & 108                                      & 24.76          & 41.19           & 0.25          & 4             & 12.91          & 31.76           & 61.55          & 24            & 22.06          & 37.17           & 332.87          & 54            \\
0.8          & 108                                      & 22.13          & 32.15           & 0.21          & 4             & 13.97          & 25.23           & 75.94          & 21            & 9.66           & 21.35           & 470.93          & 65            \\
1            & 108                                      & 17.16          & 24.29           & 0.21          & 16            & 11.11          & 19.02           & 63.89          & 47            & 6.26           & 14.96           & 518.83          & 66            \\
1.2          & 108                                      & 14.51          & 18.66           & 0.21          & 33            & 9.67           & 14.40           & 40.93          & 49            & 6.35           & 11.70           & 605.68          & 88            \\\hline
\textbf{AVG} & \textbf{108}                             & \textbf{19.64} & \textbf{29.07}  & \textbf{0.22} & \textbf{14.3} & \textbf{11.92} & \textbf{22.60}  & \textbf{60.58} & \textbf{35.3} & \textbf{11.08} & \textbf{21.29}  & \textbf{482.08} & \textbf{68.3}\\\hline
\end{tabular}}\end{subtable}
\end{table}

In Table \ref{table:4approaches_TE}, we see that, on average, the performance of the five approaches shows an improving trend as the value of $T_E$ increases. For each $T_E$, we observe that VFA-CF gets the smallest gaps when $c$ is 0.8 and 1.2, and for VFA-2S $c$ is 0.6 and 1. 
On the other hand, we see that when $T_E=1.2\cdot T_{standard}$, the values of $freq$ are the biggest for most approaches, reflecting the fact that these instances are easier to solve. 
Finally, we observe that though $MH$ and $ME$ provide results in a short time, the policy quality is sacrificed. OPrd\_PE provides the best results among benchmark approaches, especially on the frequency of being the best. However, the average gaps are bigger than VFA-CF and VFA-2S, and the running time is high.  

\section{Illustrative Example}
\label{sec:route_example_plot}

An example is now shown to illustrate the differences among the solutions provided by the different policies. We take an instance with $\beta = 1.5$, $\delta = 0$, and $c = 0.6$. We plot the routes returned by each policy. The routes are colored in order: red - first route, black - second route, green - third route, blue - fourth route, magenta - fifth route, and orange - sixth route. The number next to each node is its release date (RD).

The points are plotted according to their coordinates. The left subfigure in Figure \ref{fig:routes_VFA-CF} refers to VFA-CF and shows five routes starting consecutively with a waiting time of 17 before starting the last route. In the right subfigure that refers to VFA-2S in Figure \ref{fig:routes_VFA-CF}, the black route illustrates that VFA-2S chooses to serve a customer with an RD value equal to 28, at the top left, instead of another customer with an RD value of 39, in the center bottom, as seen in VFA-CF. Then VFA-2S starts the green route by serving customers located in the bottom left rather than those in the top left. The following two routes are similar between VFA-2S and VFA-CF. The left and right subfigures in Figure \ref{fig:routes_MH} referring to MH and ME respectively, show that the requests served by the two myopic policies are widely dispersed (note the green route in MH visits from the node with RD valued 169). This can be attributed to the fact that these policies prioritize serving as many available requests as possible without considering future consolidation opportunities. In contrast, the other policies tend to concentrate the served requests within a smaller geographical region in each route. This example serves to explain the longer average traveling time observed for the MH and ME policies, as illustrated in Table 8. Additionally, we note that traversing the same areas multiple times is inevitable when dealing with uncertain release dates. Furthermore, in Figure \ref{fig:routes_OPrd_PE} depicting the results of OPrd\_PE, we observe that despite using more routes compared to other approaches, it does not yield superior performance compared to VFA methods.
\begin{figure}[h]
    \centering    \includegraphics[width=0.9\textwidth]{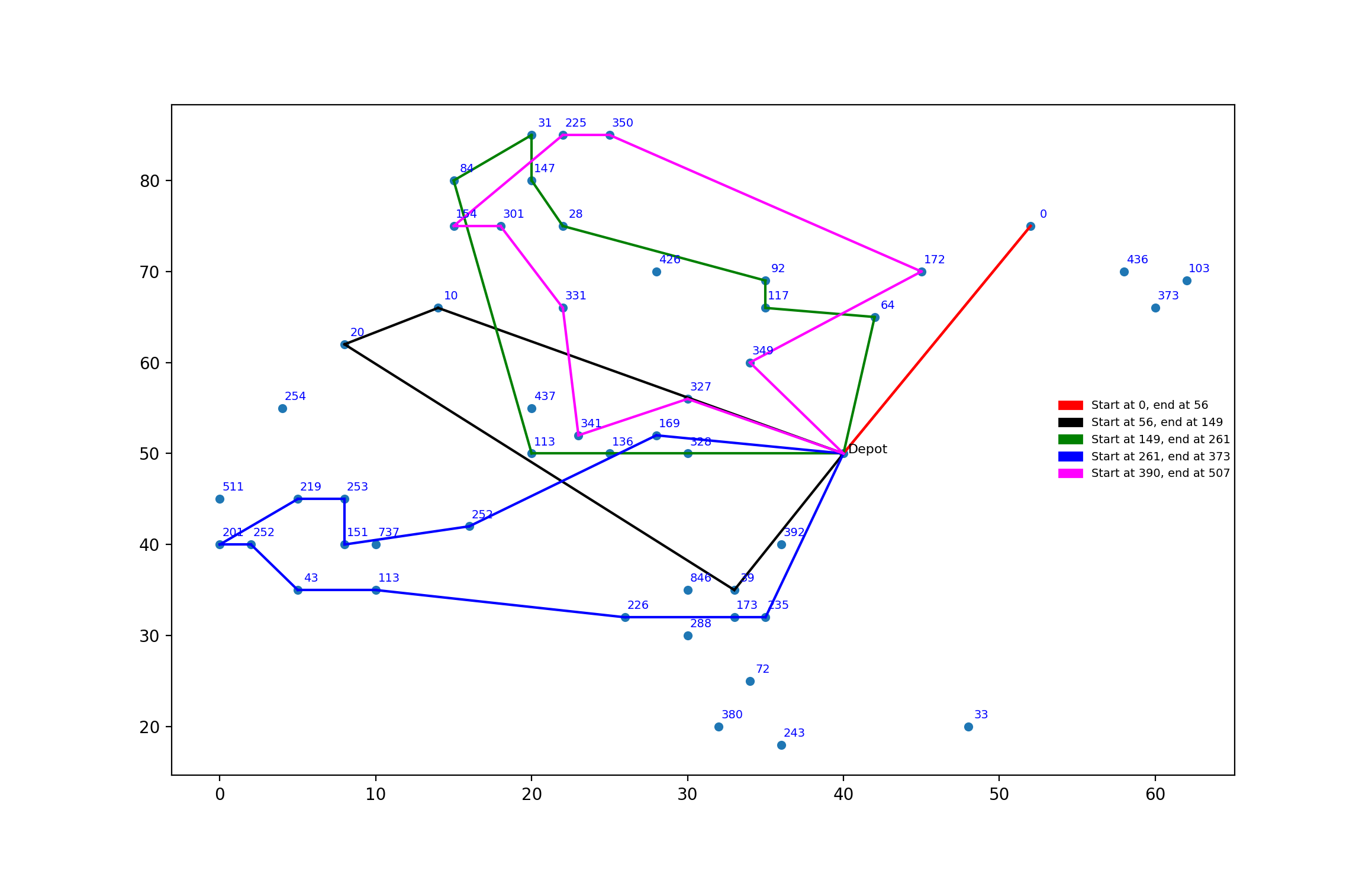}
    \includegraphics[width=0.9\textwidth]{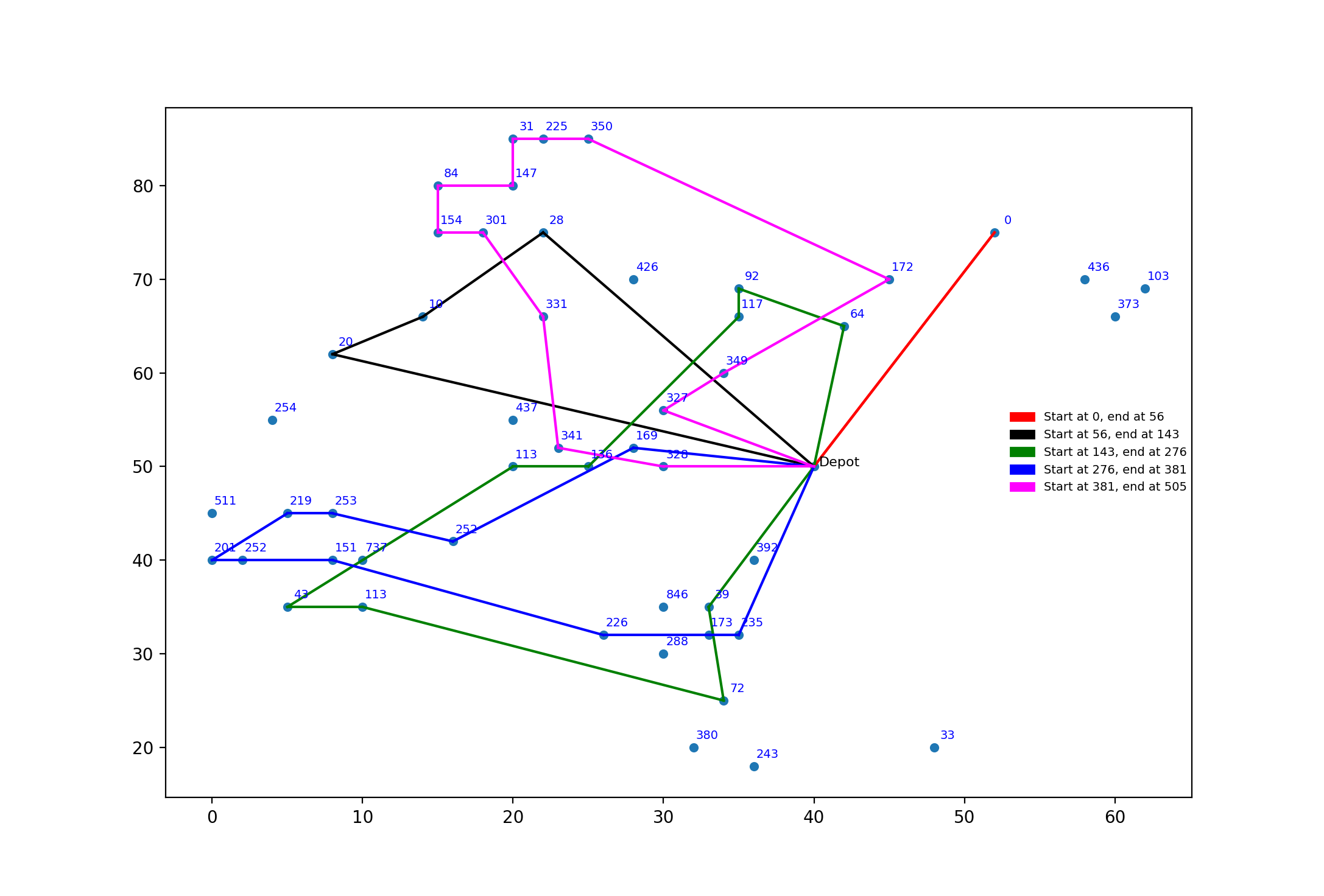}
    \caption{Up: VFA-CF returns five routes serving 34 requests, starting at time 0, 56, 149, 261, and 390. Bottom: VFA-2S returns five routes serving 36 requests, starting at time 0, 56, 143, 276, and 381.}
    \label{fig:routes_VFA-CF}
\end{figure}
\begin{figure}
    \centering    \includegraphics[width=0.9\textwidth]{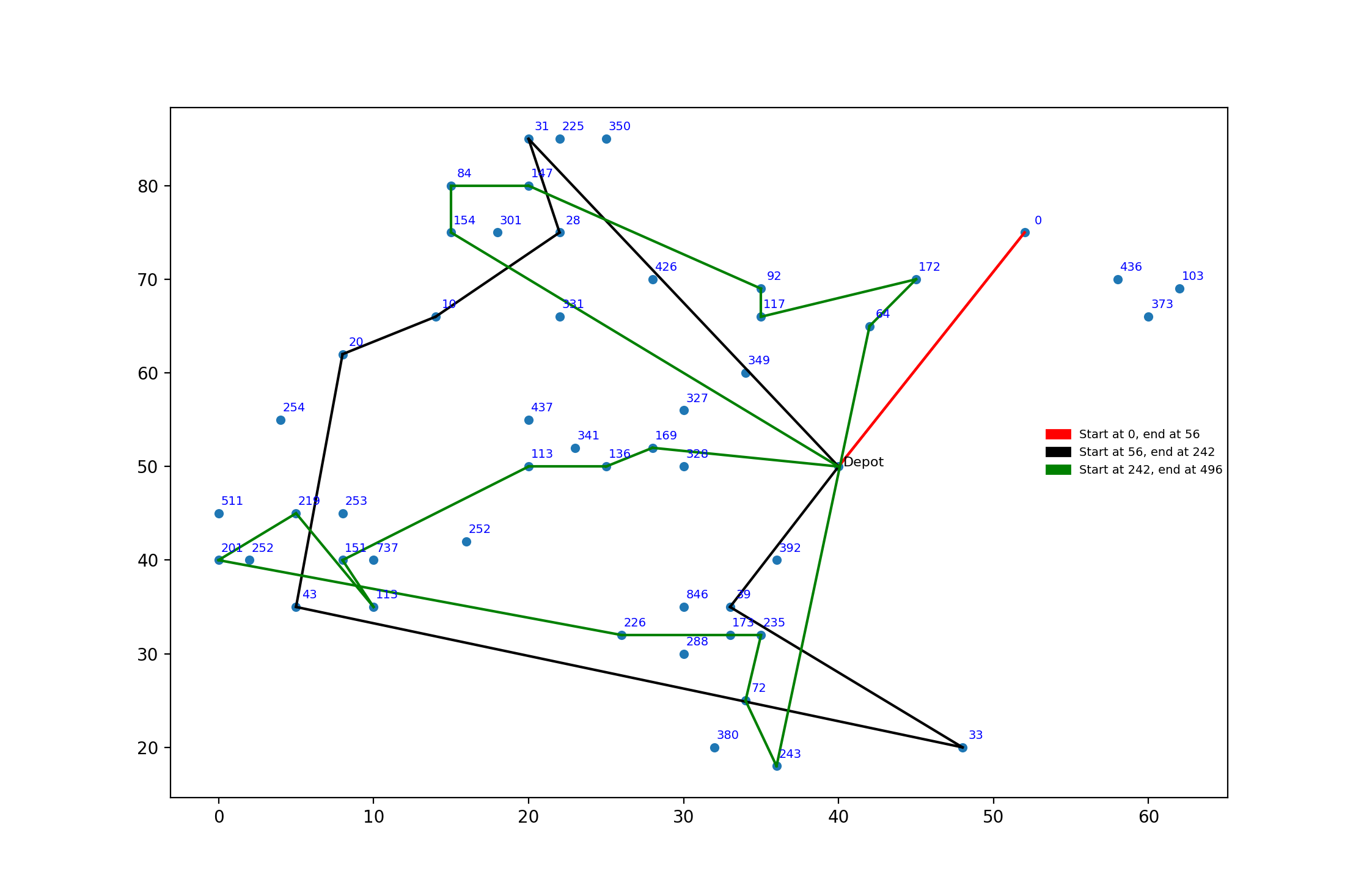}
    \includegraphics[width=0.9\textwidth]{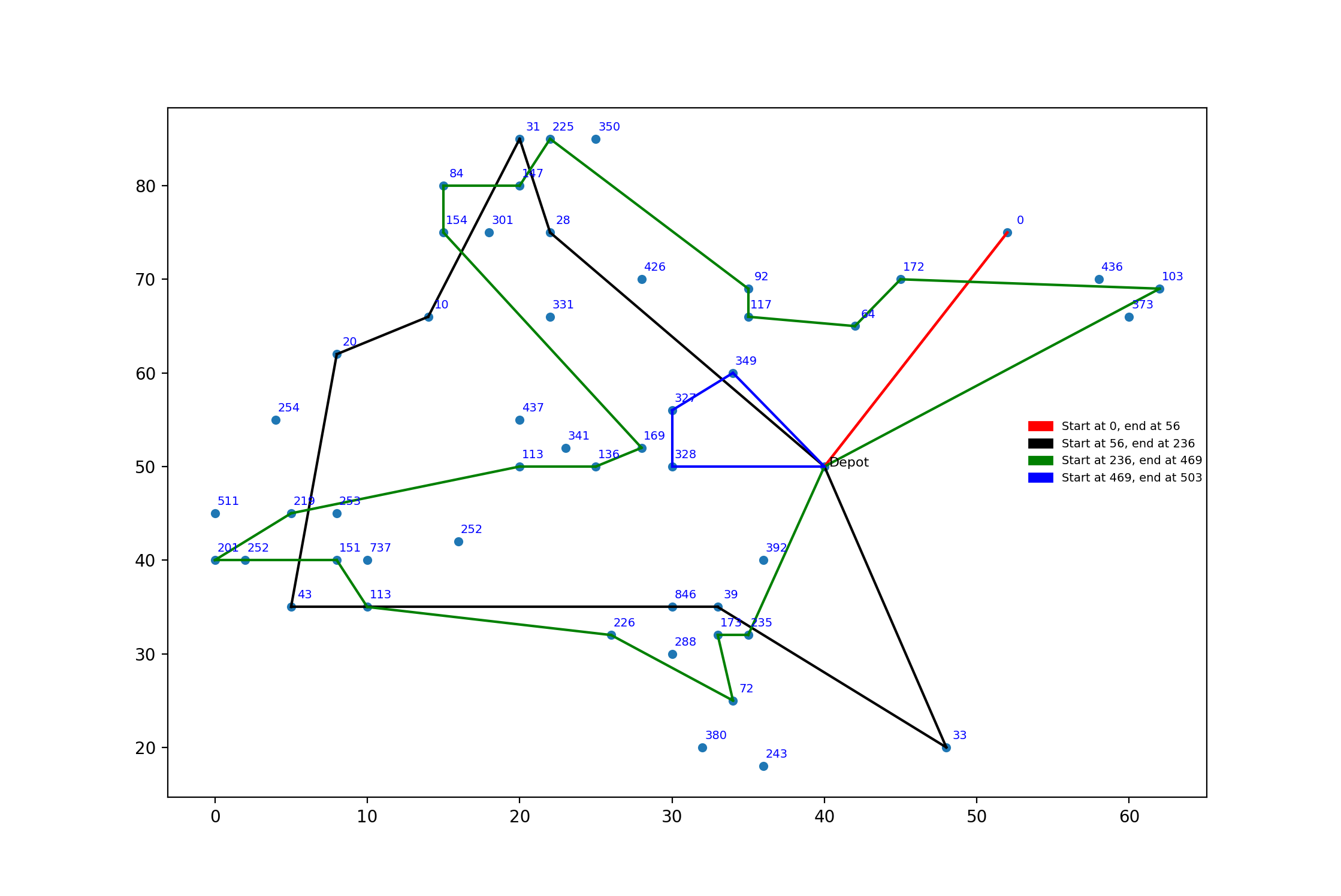}
    \caption{Up: MH returns three routes serving 27 requests, starting at time 0, 56, and 242. Bottom: ME returns four routes serving 31 requests, starting at time 0, 56, 236, and 469.}
    \label{fig:routes_MH}
\end{figure}
\begin{figure}
    \centering
    \includegraphics[width=0.9\textwidth]{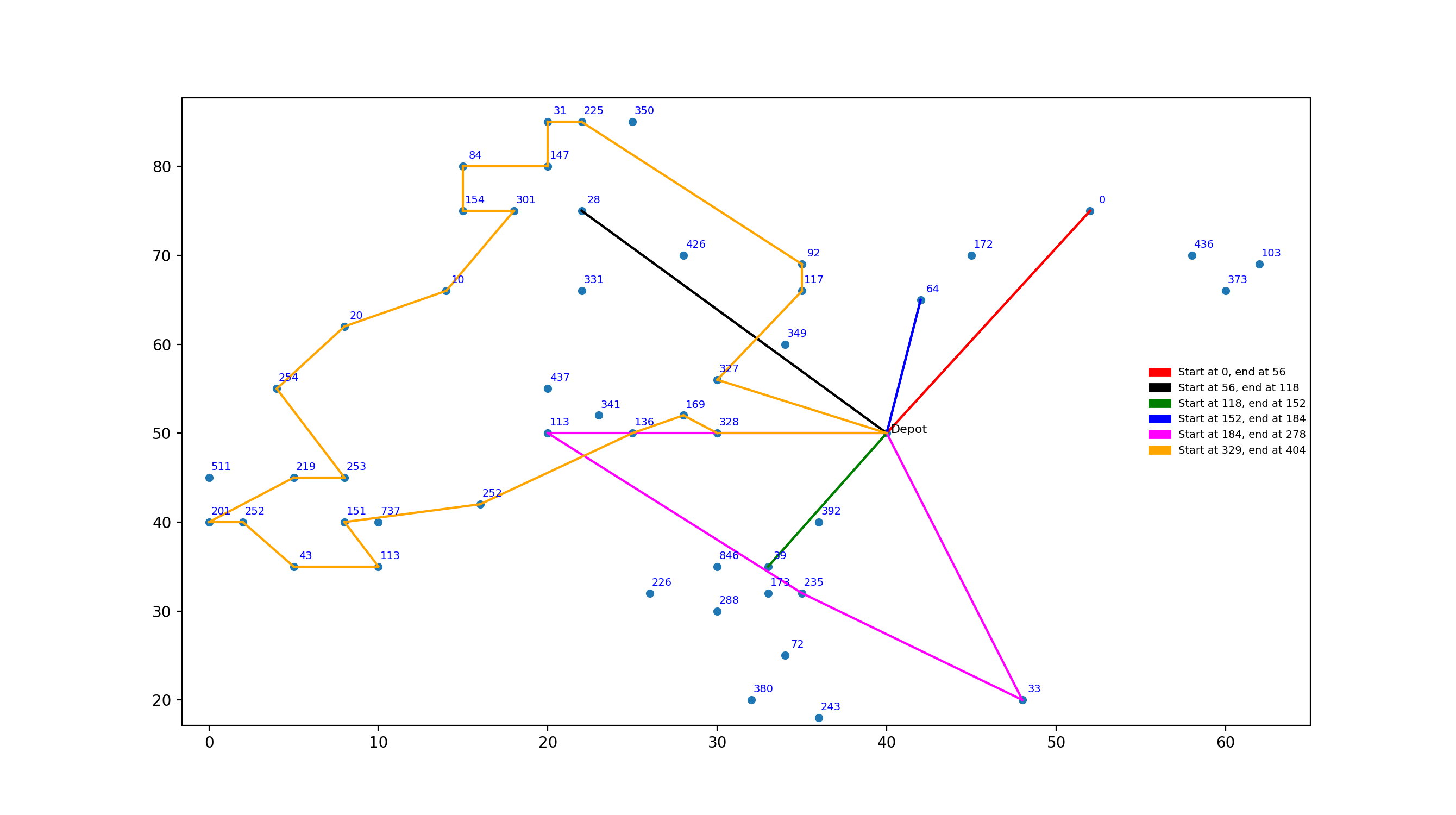}
    \caption{OPrd\_PE returns six routes serving 31 requests, starting at time 0, 56, 118, 152, 184, and 329}
    \label{fig:routes_OPrd_PE}
\end{figure}

\section{Comparison with a Deterministic and Static Approach}
\label{sec:deter}
To show the importance of considering both stochastic and dynamic release dates, we compare VFA approaches with an alternative approach, called OPrd\_Sto, that uses only the stochastic information available at time zero.
Similarly to the OPrd\_PE, OPrd\_Sto is based on the OP-rd formulation, using expected release dates at time 0 as point estimations. It performs exact evaluations of future rewards, but with a distinction: the soluton obtained at time 0 is kept and never modified afterwards. Specifically, for each route of this deterministic solution, we wait until all its scheduled requests have arrived, before sending out the vehicle. 
The route is executed only in case it does not exceed the deadline.
We test OPrd\_Sto on the same 432 test instances, each containing 50 customers, as for the VFA approaches. For each instance, we set a time limit of 30 minutes. In Table \ref{table:sto_RD}, we summarize the results over $\delta$, $\beta$, and $c$. The  percentage gap (Gap\%) is calculated as follows: for each instance, the gap with VFA-CF is calculated by 100 minus the ratio of requests served by OPrd\_Sto to those served by VFA-CF, multiplied by 100. The average  percentage gap over all instances is then computed for each parameter ($\delta$, $\beta$, or $c$). The same calculation is used for the comparison with VFA-2S. 

Compared to VFA approaches, OPrd\_Sto needs 30 times more computing time, on average. This increase in computational effort is due to the exact route calculations. Despite that, the results reveal that OPrd\_Sto serves $28\%$ fewer requests compared to VFA approaches on average, indicating a significant performance gap. OPrd\_Sto considers stochastic information only at time 0 and produces a deterministic solution. In contrast, VFA approaches account for both stochastic and dynamic aspects, which emphasizes the value of including both types of information in the decision-making process.

\section{Detailed Computational Results}
\label{detail}

\begin{table}[tb!]
\caption{Comparing VFA approaches with an approach that only considers stochastic release dates}
\label{table:sto_RD}
\centering
\scalebox{0.6}{\color{black}
\begin{tabular}{l|c||ccc||cc}
\toprule
                   & \textbf{}           & \multicolumn{3}{c}{\textbf{t{[}s{]}}}                  & \multicolumn{2}{c}{\textbf{Gap\%}} \\\hline
                   & \textbf{\#instances} & \textbf{OPrd\_Sto} & \textbf{VFA-CF} & \textbf{VFA-2S} & \textbf{VFA-CF}  & \textbf{VFA-2S} \\\hline\hline
$\delta$=0   & 144                 & 1568.19            & 43.70           & 32.08           & 16.08            & 16.60           \\
$\delta$=0.5 & 144                 & 1568.16            & 45.34           & 36.20           & 28.41            & 28.41           \\
$\delta$=1   & 144                 & 1640.35            & 60.45           & 42.85           & 40.41            & 40.22           \\\hline
$\beta$=0.5  & 144                 & 1341.71            & 64.90           & 45.50           & 20.32            & 20.31           \\
$\beta$=1    & 144                 & 1790.45            & 47.54           & 33.02           & 33.46            & 33.58           \\
$\beta$=1.5  & 144                 & 1644.53            & 37.04           & 32.61           & 31.13            & 31.34           \\\hline
$c$=0.6     & 108                 & 1422.19            & 44.88           & 33.04           & 19.55            & 20.41           \\
$c$=0.8     & 108                 & 1650.30            & 47.34           & 33.18           & 33.83            & 33.62           \\
$c$=1       & 108                 & 1618.64            & 57.73           & 40.50           & 28.56            & 28.69           \\
$c$=1.2     & 108                 & 1677.80            & 49.36           & 41.45           & 31.27            & 30.92           \\\hline
\textbf{AVG}       & \textbf{129.6}               & \textbf{1592.23}            & \textbf{49.83}           & \textbf{37.04}           & \textbf{28.30}            & \textbf{28.41}          
\\ \hline
\end{tabular}}
\end{table}

In Tables \ref{4approaches_betadelta-part 1} and \ref{4approaches_betadelta-part 2}, we display the average performance of the five approaches across various combinations of $\beta$, $\delta$, and $c$. Notably, when $\beta$ is equal to 1.5, $\delta$ is equal to 0, and $c$ is equal to 1.2, all approaches get optimal policies, which means this setting is easy to solve. On the other hand, with $\beta= 0.5$, $\delta=1$, and $c=0.6$, VFA approaches get the largest gaps, while OPrd\_PE performs well. In general, both VFA and myopic approaches perform better as $\beta$ increases, and all approaches face challenges in obtaining good policies with smaller values of $c$ in most cases. Additionally, it is noteworthy that VFA approaches exhibit more stable performance in gaps than other approaches. Moreover, the running time of VFA approaches and MH remains stable across settings, whereas ME and OPrd\_PE show more diverse runtime.
\begin{table}[tb!]
\caption{Average performance of the five approaches - part 1 (50 customers)}
\label{4approaches_betadelta-part 1}
\centering
\scalebox{0.6}{\color{black}
\begin{tabular}{l|l|l|l|cccc|cccc}
\toprule
\multicolumn{1}{l}{}     &       &     &             & \multicolumn{4}{c}{VFA-CF}                      & \multicolumn{4}{c}{VFA-2S}                      \\\hline
\multicolumn{1}{l}{$\beta$} & $\delta$ & $c$   & \#instances & gap{[}\%{]} & gap\_ub{[}\%{]} & t{[}s{]} & freq & gap{[}\%{]} & gap\_ub{[}\%{]} & t{[}s{]} & freq \\\hline\hline
\multirow{5}{*}{0.5}    & 0     & 0.6 & 12          & 6.67        & 20.24           & 38.71    & 9    & 4.44        & 18.16           & 23.49    & 9    \\
                         & 0     & 0.8 & 12          & 13.90        & 25.96           & 49.19    & 6    & 13.9        & 25.96           & 22.92    & 6    \\
                         & 0     & 1   & 12          & 9.18        & 26.45           & 36.79    & 6    & 9.18        & 26.45           & 27.33    & 6    \\
                         & 0     & 1.2 & 12          & 10.25       & 24.30            & 39.43    & 4    & 10.29       & 24.32           & 46.00       & 4    \\
                         &       & AVG & 12          & 10.00          & 24.24           & 41.03    & 6.3  & 9.45        & 23.72           & 29.93    & 6.3  \\\hline
               \multirow{5}{*}{0.5}          & 0.5   & 0.6 & 12          & 22.25       & 28.45           & 26.72    & 4    & 22.25       & 28.45           & 20.31    & 4    \\
                         & 0.5   & 0.8 & 12          & 8.81        & 22.39           & 40.90     & 5    & 10.12       & 23.46           & 28.74    & 4    \\
                         & 0.5   & 1   & 12          & 4.13        & 21.97           & 46.71    & 8    & 5.78        & 23.21           & 76.95    & 5    \\
                         & 0.5   & 1.2 & 12          & 8.53        & 20.83           & 77.94    & 3    & 10.96       & 22.91           & 43.92    & 1    \\
                         &       & AVG & 12          & 10.93       & 23.41           & 48.07    & 5.0    & 12.28       & 24.51           & 42.48    & 3.5  \\\hline
                \multirow{5}{*}{0.5}         & 1     & 0.6 & 12          & 23.69       & 31.70            & 32.92    & 3    & 22.86       & 30.87           & 25.88    & 3    \\
                         & 1     & 0.8 & 12          & 11.15       & 25.16           & 49.66    & 4    & 10.79       & 24.76           & 30.50     & 4    \\
                         & 1     & 1   & 12          & 9.19        & 24.43           & 215.04   & 3    & 7.92        & 23.43           & 84.19    & 3    \\
                         & 1     & 1.2 & 12          & 7.32        & 20.50            & 124.82   & 5    & 9.12        & 21.89           & 115.82   & 3    \\
                         &       & AVG & 12          & 12.84       & 25.45           & 105.61   & 3.8  & 12.67       & 25.24           & 64.10     & 3.3  \\\hline
                         & AVG   &     & 12          & 11.26       & 24.36           & 64.90     & 5.0    & 11.47       & 24.49           & 45.50     & 4.3  \\\hline\hline
\multirow{5}{*}{1}      & 0     & 0.6 & 12          & 6.57        & 37.98           & 62.46    & 6    & 6.27        & 37.78           & 38.64    & 7    \\
                         & 0     & 0.8 & 12          & 3.83        & 23.47           & 84.24    & 6    & 4.14        & 23.69           & 46.44    & 6    \\
                         & 0     & 1   & 12          & 2.71        & 11.98           & 37.72    & 6    & 0.58        & 9.94            & 25.83    & 10   \\
                         & 0     & 1.2 & 12          & 2.56        & 4.00               & 34.81    & 3    & 2.56        & 4.00               & 27.64    & 3    \\
                         &       & AVG & 12          & 3.92        & 19.36           & 54.81    & 5.3  & 3.39        & 18.85           & 34.64    & 6.5  \\\hline
         \multirow{5}{*}{1}                & 0.5   & 0.6 & 12          & 8.56        & 35.41           & 53.22    & 5    & 5.82        & 33.40            & 34.38    & 6    \\
                         & 0.5   & 0.8 & 12          & 2.53        & 27.44           & 50.81    & 8    & 2.90         & 27.62           & 35.36    & 6    \\
                         & 0.5   & 1   & 12          & 4.90         & 16.30            & 50.31    & 5    & 4.09        & 15.46           & 34.92    & 5    \\
                         & 0.5   & 1.2 & 12          & 2.82        & 8.67            & 38.20     & 6    & 3.12        & 8.83            & 27.42    & 5    \\
                         &       & AVG & 12          & 4.71        & 21.95           & 48.13    & 6.0    & 3.98        & 21.33           & 33.02    & 5.5  \\\hline
            \multirow{5}{*}{1}             & 1     & 0.6 & 12          & 3.52        & 29.42           & 50.48    & 7    & 5.28        & 30.66           & 38.86    & 5    \\
                         & 1     & 0.8 & 12          & 6.30         & 18.52           & 39.82    & 2    & 7.59        & 19.50            & 32.23    & 1    \\
                         & 1     & 1   & 12          & 3.80         & 10.52           & 35.97    & 3    & 3.54        & 10.35           & 27.84    & 3    \\
                         & 1     & 1.2 & 12          & 0.71        & 3.92            & 32.51    & 8    & 0.71        & 3.92            & 26.68    & 8    \\
                         &       & AVG & 12          & 3.58        & 15.60            & 39.69    & 5.0    & 4.28        & 16.11           & 31.40     & 4.3  \\\hline
                         & AVG   &     & 12          & 4.07        & 18.97           & 47.54    & 5.4  & 3.88        & 18.76           & 33.02    & 5.4  \\\hline\hline
\multirow{5}{*}{1.5}    & 0     & 0.6 & 12          & 2.04        & 25.78           & 46.31    & 6    & 0.72        & 24.74           & 40.8     & 10   \\
                         & 0     & 0.8 & 12          & 2.24        & 5.20             & 32.45    & 6    & 1.86        & 4.85            & 30.91    & 6    \\
                         & 0     & 1   & 12          & 0           & 0               & 30.46    & 12   & 0           & 0               & 27.80     & 12   \\
                         & 0     & 1.2 & 12          & 0           & 0               & 31.82    & 12   & 0           & 0               & 27.20     & 12   \\
                         &       & AVG & 12          & 1.07        & 7.75            & 35.26    & 9.0    & 0.64        & 7.40             & 31.68    & 10.0   \\\hline
             \multirow{5}{*}{1.5}            & 0.5   & 0.6 & 12          & 5.58        & 30.53           & 51.72    & 4    & 3.72        & 29.22           & 39.2     & 5    \\
                         & 0.5   & 0.8 & 12          & 5.31        & 17.18           & 41.22    & 5    & 4.07        & 16.33           & 34.39    & 2    \\
                         & 0.5   & 1   & 12          & 0.98        & 5.60             & 33.55    & 8    & 0.98        & 5.60             & 29.46    & 8    \\
                         & 0.5   & 1.2 & 12          & 0.50         & 2.00               & 32.80     & 11   & 0.33        & 1.83            & 29.39    & 11   \\
                         &       & AVG & 12          & 3.09        & 13.83           & 39.82    & 7.0    & 2.28        & 13.25           & 33.11    & 6.5  \\\hline
             \multirow{5}{*}{1.5}            & 1     & 0.6 & 12          & 4.20         & 27.11           & 41.36    & 6    & 4.88        & 27.55           & 35.84    & 4    \\
                         & 1     & 0.8 & 12          & 3.23        & 10.89           & 37.81    & 3    & 4.68        & 11.98           & 37.14    & 3    \\
                         & 1     & 1   & 12          & 1.12        & 4.08            & 33.06    & 9    & 1.31        & 4.25            & 30.23    & 8    \\
                         & 1     & 1.2 & 12          & 0.17        & 0.83            & 31.91    & 11   & 0.17        & 0.83            & 28.95    & 11   \\
                         &       & AVG & 12          & 2.18        & 10.73           & 36.04    & 7.3  & 2.76        & 11.15           & 33.04    & 6.5  \\\hline
                         & AVG   &     & 12          & 2.11        & 10.77           & 37.04    & 7.8  & 1.89        & 10.60            & 32.61    & 7.7  \\\hline\hline
\multicolumn{1}{l}{AVG}  &       &     & 12          & 5.81        & 18.03           & 49.83    & 6.1  & 5.75        & 17.95           & 37.04    & 5.8 \\\bottomrule
\end{tabular}
}
\end{table}

\begin{table}[tb!]
\caption{Average performance of the five approaches - part 2 (50 customers)}
\label{4approaches_betadelta-part 2}
\centering
\scalebox{0.6}{\color{black}
\begin{tabular}{l|l|l|l|cccc|cccc|cccc}\toprule
\multicolumn{1}{l}{}     &       &     &             & \multicolumn{4}{c}{MH}                          & \multicolumn{4}{c}{ME}                          & \multicolumn{4}{c}{OPrd\_PE}                    \\\hline
\multicolumn{1}{l}{$\beta$} & $\delta$  & $c$   & \#instances & gap{[}\%{]} & gap\_ub{[}\%{]} & t{[}s{]} & freq & gap{[}\%{]} & gap\_ub{[}\%{]} & t{[}s{]} & freq & gap{[}\%{]} & gap\_ub{[}\%{]} & t{[}s{]} & freq  \\\hline\hline
\multirow{5}{*}{0.5}    & 0     & 0.6 & 12          & 27.28       & 38.05           & 0.20      & 0    & 7.27        & 20.10            & 1.07     & 6    & 39.12       & 44.29           & 135.65   & 6    \\
                         & 0     & 0.8 & 12          & 39.74       & 47.43           & 0.19     & 0    & 29.78       & 38.00              & 21.16    & 0    & 9.64        & 19.50            & 110.64   & 6    \\
                         & 0     & 1   & 12          & 39.29       & 50.59           & 0.19     & 0    & 30.35       & 42.85           & 151.00      & 0    & 8.33        & 24.72           & 144.39   & 9    \\
                         & 0     & 1.2 & 12          & 38.48       & 47.80            & 0.18     & 0    & 27.71       & 38.59           & 155.86   & 0    & 4.41        & 18.41           & 227.02   & 9    \\
                         &       & AVG & 12          & 36.20        & 45.97           & 0.19     & 0    & 23.78       & 34.88           & 82.27    & 1.5  & 15.38       & 26.73           & 154.42   & 7.5  \\\hline
         \multirow{5}{*}{0.5}                & 0.5   & 0.6 & 12          & 23.83       & 29.04           & 0.23     & 2    & 12.33       & 18.5            & 26.05    & 5    & 6.19        & 11.52           & 97.49    & 10   \\
                         & 0.5   & 0.8 & 12          & 28.48       & 37.73           & 0.19     & 0    & 19.95       & 30.67           & 33.83    & 3    & 17.22       & 29.28           & 186.41   & 8    \\
                         & 0.5   & 1   & 12          & 32.34       & 44.21           & 0.21     & 0    & 23.67       & 36.83           & 26.18    & 2    & 25.79       & 37.36           & 156.92   & 7    \\
                         & 0.5   & 1.2 & 12          & 36.18       & 44.25           & 0.22     & 0    & 26.47       & 35.80            & 37.18    & 0    & 17.47       & 27.47           & 246.14   & 8    \\
                         &       & AVG & 12          & 30.21       & 38.81           & 0.21     & 0.5  & 20.60        & 30.45           & 30.81    & 2.50  & 16.67       & 26.41           & 171.74   & 8.3  \\\hline
            \multirow{5}{*}{0.5}             & 1     & 0.6 & 12          & 34.97       & 39.99           & 0.22     & 0    & 23.15       & 28.42           & 55.71    & 0    & 5.06        & 11.85           & 133.92   & 10   \\
                         & 1     & 0.8 & 12          & 35.31       & 44.29           & 0.18     & 0    & 26.58       & 36.83           & 90.07    & 0    & 4.31        & 18.09           & 151.79   & 7    \\
                         & 1     & 1   & 12          & 34.41       & 45.18           & 0.20      & 0    & 24.98       & 37.23           & 57.36    & 0    & 2.88        & 18.37           & 194.83   & 8    \\
                         & 1     & 1.2 & 12          & 34.49       & 43.48           & 0.20      & 0    & 23.59       & 33.90            & 65.85    & 0    & 5.01        & 17.47           & 319.38   & 9    \\
                         &       & AVG & 12          & 34.79       & 43.24           & 0.20      & 0    & 24.58       & 34.09           & 67.25    & 0    & 4.31        & 16.44           & 199.98   & 8.5  \\\hline
                         & AVG   &     & 12          & 33.73       & 42.67           & 0.20      & 0.2  & 22.99       & 33.14           & 60.11    & 1.3  & 12.12       & 23.19           & 175.38   & 8.1  \\\hline\hline
\multirow{5}{*}{1}      & 0     & 0.6 & 12          & 30.44       & 54.26           & 0.29     & 0    & 18.86       & 47.01           & 52.34    & 1    & 14.61       & 41.61           & 387.09   & 5    \\
                         & 0     & 0.8 & 12          & 24.86       & 39.84           & 0.21     & 0    & 13.10        & 30.33           & 271.39   & 0    & 2.82        & 22.53           & 514.36   & 6    \\
                         & 0     & 1   & 12          & 12.54       & 20.80            & 0.18     & 0    & 3.82        & 12.59           & 154.01   & 6    & 6.76        & 15.89           & 646.12   & 4    \\
                         & 0     & 1.2 & 12          & 4.07        & 5.50             & 0.19     & 0    & 1.01        & 2.50             & 1.35     & 6    & 22.52       & 23.50            & 947.33   & 6    \\
                         &       & AVG & 12          & 17.98       & 30.10            & 0.22     & 0    & 9.20         & 23.11           & 119.77   & 3.3  & 11.68       & 25.88           & 623.72   & 5.3  \\\hline
        \multirow{5}{*}{1}                  & 0.5   & 0.6 & 12          & 29.44       & 49.99           & 0.23     & 0    & 21.16       & 44.73           & 12.65    & 0    & 33.31       & 49.80            & 316.72   & 5    \\
                         & 0.5   & 0.8 & 12          & 24.19       & 42.85           & 0.23     & 0    & 12.38       & 34.16           & 175.12   & 2    & 37.02       & 53.02           & 409.02   & 2    \\
                         & 0.5   & 1   & 12          & 15.14       & 24.59           & 0.24     & 2    & 7.85        & 18.69           & 103.67   & 3    & 2.61        & 13.96           & 620.31   & 6    \\
                         & 0.5   & 1.2 & 12          & 8.13        & 13.17           & 0.20      & 1    & 4.41        & 10.00              & 53.67    & 2    & 6.34        & 11.83           & 688.02   & 11   \\
                         &       & AVG & 12          & 19.22       & 32.65           & 0.23     & 0.8  & 11.45       & 26.90            & 86.28    & 1.8  & 19.82       & 32.15           & 508.52   & 6    \\\hline
           \multirow{5}{*}{1}               & 1     & 0.6 & 12          & 26.56       & 46.19           & 0.29     & 0    & 12.38       & 36.06           & 75.96    & 2    & 14.28       & 37.13           & 331.45   & 2    \\
                         & 1     & 0.8 & 12          & 18.74       & 28.9            & 0.24     & 0    & 11.65       & 22.74           & 87.59    & 3    & 1.35        & 14.32           & 489.73   & 9    \\
                         & 1     & 1   & 12          & 11.67       & 17.07           & 0.22     & 1    & 6.43        & 12.63           & 76.41    & 6    & 0.78        & 7.69            & 638.89   & 11   \\
                         & 1     & 1.2 & 12          & 5.71        & 8.28            & 0.20      & 5    & 2.74        & 5.60             & 52.49    & 8    & 1.02        & 4.09            & 769.08   & 11   \\
                         &       & AVG & 12          & 15.67       & 25.11           & 0.24     & 1.5  & 8.30         & 19.26           & 73.11    & 4.8  & 4.36        & 15.81           & 557.29   & 8.3  \\\hline
                         & AVG   &     & 12          & 17.62       & 29.29           & 0.23     & 0.8  & 9.65        & 23.09           & 93.05    & 3.3  & 11.95       & 24.61           & 563.18   & 6.5  \\\hline\hline
\multirow{5}{*}{1.5}    & 0     & 0.6 & 12          & 19.91       & 39.26           & 0.21     & 0    & 6.70         & 29.22           & 152.78   & 1    & 50.04       & 61.71           & 608.24   & 3    \\
                         & 0     & 0.8 & 12          & 7.63        & 10.37           & 0.20      & 3    & 1.58        & 4.68            & 0.45     & 6    & 0.52        & 3.63            & 958.10    & 9    \\
                         & 0     & 1   & 12          & 1.53        & 1.53            & 0.21     & 3    & 0.51        & 0.51            & 0.44     & 9    & 4.42        & 4.42            & 743.77   & 8    \\
                         & 0     & 1.2 & 12          & 0           & 0               & 0.21     & 12   & 0           & 0               & 0.48     & 12   & 0           & 0               & 843.10    & 12   \\
                         &       & AVG & 12          & 7.27        & 12.79           & 0.21     & 4.5  & 2.20         & 8.60             & 38.54    & 7    & 13.74       & 17.44           & 788.30    & 8    \\\hline
        \multirow{5}{*}{1.5}                 & 0.5   & 0.6 & 12          & 15.43       & 38.71           & 0.25     & 2    & 8.04        & 33.11           & 25.6     & 5    & 25.88       & 45.78           & 494.29   & 6    \\
                         & 0.5   & 0.8 & 12          & 10.99       & 21.90            & 0.26     & 0    & 5.04        & 16.77           & 2.25     & 4    & 13.13       & 23.15           & 633.82   & 7    \\
                         & 0.5   & 1   & 12          & 4.35        & 8.66            & 0.22     & 3    & 2.25        & 6.62            & 5.09     & 10   & 1.61        & 6.29            & 766.38   & 6    \\
                         & 0.5   & 1.2 & 12          & 2.51        & 3.83            & 0.23     & 7    & 0.96        & 2.33            & 0.88     & 10   & 0           & 1.50             & 649.83   & 12   \\
                         &       & AVG & 12          & 8.32        & 18.28           & 0.24     & 3    & 4.07        & 14.71           & 8.46     & 7.3  & 10.16       & 19.18           & 636.08   & 7.8  \\\hline
            \multirow{5}{*}{1.5}             & 1     & 0.6 & 12          & 15.01       & 35.23           & 0.29     & 0    & 6.32        & 28.73           & 151.81   & 4    & 10.08       & 30.82           & 490.97   & 7    \\
                         & 1     & 0.8 & 12          & 9.29        & 16.04           & 0.21     & 1    & 5.69        & 12.88           & 1.58     & 3    & 0.93        & 8.64            & 784.49   & 11   \\
                         & 1     & 1   & 12          & 3.17        & 5.95            & 0.19     & 7    & 0.17        & 3.23            & 0.80      & 11   & 3.12        & 5.95            & 757.84   & 7    \\
                         & 1     & 1.2 & 12          & 1.02        & 1.67            & 0.23     & 8    & 0.17        & 0.83            & 0.62     & 11   & 0.34        & 1.00               & 761.20    & 10   \\
                         &       & AVG & 12          & 7.12        & 14.72           & 0.23     & 4    & 3.09        & 11.42           & 38.71    & 7.3  & 3.62        & 11.60            & 698.63   & 8.8  \\\hline
                         & AVG   &     & 12          & 7.57        & 15.26           & 0.23     & 3.8  & 3.12        & 11.58           & 28.57    & 7.2  & 9.17        & 16.07           & 707.67   & 8.2  \\\hline\hline
\multicolumn{1}{l}{AVG}  &       &     & 12          & 19.64       & 29.07           & 0.22     & 1.6  & 11.92       & 22.60            & 60.58    & 3.9  & 11.08       & 21.29           & 482.08   & 7.6 \\\bottomrule
\end{tabular}
}
\end{table}
\end{appendices}

\end{document}